\newcommand\E{\mathbb E}
\newcommand\I{\mathbbm1}
\renewcommand\P{\mathbb P}
\renewcommand\Pr{\mathbb P}
\newcommand\R{\mathbb R}
\newcommand\N{\mathbb N}
\newcommand\cF{\mathcal F}
\newcommand\cL{\mathcal L}
\newcommand\cX{\mathcal A}
\newcommand\cY{\mathcal B}
\newcommand\cZ{ E}
\def\e{\mathbb{E}\, }
\def\L{\mathcal{L}}
\def\R{\mathbb{R}}
\def\I{\mathcal{I}}
\def\J{\mathcal{J}}
\def\X{ \textbf{X} }
\def\Y{ \textbf{Y} }
\def\XI{\textbf{$X^{(I)}$}}
\def\xI{\textbf{$x^{(I)}$}}
\def\EI{E^{(I)}}
\def\yI{y_{I}}
\def\return{\textbf{return} }
\def\restart{\textbf{restart}}
\def\assume{\textbf{Assumption:} }
\def\range{ \text{range}} 
\newcommand\ignore[1]{ }
\newcommand\RESTART{\STATE \textbf{restart} }
\DeclareMathOperator{\Var}{Var}
\DeclareMathOperator{\Geo}{Geo}
\DeclareMathOperator{\NB}{NB}
\DeclareMathOperator{\Bern}{Bern}
\newtheorem{theorem}{Theorem}[section]
\newtheorem{lemma}[theorem]{Lemma}
\newtheorem{conjecture}[theorem]{Conjecture}
\newtheorem{corollary}[theorem]{Corollary}
\newtheorem{proposition}[theorem]{Proposition}
\theoremstyle{definition}
\newtheorem{definition}{Definition}[section]
\newtheorem{remark}[theorem]{Remark}
\newtheorem{assumption}[theorem]{Assumption}
\theoremstyle:=definition,remark,plain\do{%
        \expandafter\g@addto@macro\csname th@\theoremstyle\endcsname{%
            \addtolength\thm@preskip\parskip
            }%
        }
\begin{document}

\begin{frontmatter}

\title{Random Sampling of Contingency Tables via Probabilistic Divide-and-Conquer}
\runtitle{Contingency Tables via PDC}

\begin{aug}
  \author{\fnms{Stephen}  \snm{DeSalvo}\corref{}\thanksref{t1}\ead[label=e1]{stephendesalvo@math.ucla.edu}}
  \and
  \author{\fnms{James Y.} \snm{Zhao}\thanksref{t2}\ead[label=e2]{james.zhao@usc.edu}}

\runauthor{S.~DeSalvo and J.~Y.~Zhao}

  \thankstext{t1}{UCLA Department of Mathematics, stephendesalvo@math.ucla.edu}
  \thankstext{t2}{USC Department of Mathematics, james.zhao@usc.edu}


\ignore{
  \address{UCLA Department of Mathematics \\
  		520 Portola Plaza \\
  		Los Angeles, CA 90095 \\
          \printead{e1}} 
  \address{USC Department of Mathematics \\
  		3620 S. Vermont Ave., KAP 104 \\
		Los Angeles, CA 90089-2532,\\
          \printead{e2}}
}
\end{aug}

\date{January 7, 2016}

\begin{abstract}
We present a new approach for random sampling of contingency tables of any size and constraints based on a recently introduced \emph{probabilistic divide-and-conquer} technique. 
A simple exact sampling algorithm is presented for 
$2\times n$ tables, as well as a generalization 
where each entry of the table has a specified marginal distribution.  

\end{abstract}

\begin{keyword}[class=MSC]
\kwd[Primary ]{62H17}
\kwd[; secondary ]{60C05}
\kwd{52B99}
\end{keyword}

\begin{keyword}
\kwd{contingency tables}
\kwd{exact sampling}
\kwd{probabilistic divide-and-conquer}
\kwd{transportation polytope}
\end{keyword}

\end{frontmatter}

\maketitle

\ignore{
\def\spacingset#1{\renewcommand{\baselinestretch}%
{#1}\small\normalsize} \spacingset{1}

\parskip 10pt

\begin{center}
\LARGE Exact, Uniform Sampling of Contingency Tables\\[.3cm]
\normalsize Stephen DeSalvo and James Y. Zhao\\[.1cm]
November 10, 2015\\[.5cm]
\parbox{.8\textwidth}{
\small
\textit{Abstract.}
Exact sampling of contingency tables---matrices with prescribed row and column sums---is an important problem in statistics. We present a new algorithm based on the recently introduced \emph{Probabilistic Divide and Conquer} technique. The algorithm improves upon the rejection sampling algorithm for an $m\times n$ contingency table; in particular, it runs in $O(n^{3/2})$ for the well-studied case of a $2\times n$ table under a homogeneity condition on the row sums, which is substantially better than existing Markov Chain Monte Carlo techniques. Unlike MCMC, the runtime depends only on the size of the table and not on the size of the average entry, and the algorithm can be extended to exact sampling of real-valued contingency tables.
}
\end{center}
}

\tableofcontents


\section{Introduction}

\subsection{Background}
Contingency tables are an important data structure in statistics for representing the joint empirical distribution of multivariate data, and are useful for testing properties such as independence between the rows and columns~\cite{GoodCrook} and similarity between two rows or two columns \cite{kitajimamatsui, dyergreenhill}.  

Such statistical tests typically involve defining a test statistic and comparing its observed value to its distribution under the null hypothesis, that all $(r,c)$-contingency tables are equally likely. The null distribution of such a statistic is often impossible to study analytically, but can be approximated by generating contingency tables uniformly at random.
In this paper, we introduce new sampling algorithms for $(r,c)$-contingency tables. 

The most popular approach in the literature for the random generation of contingency tables is Markov Chain Monte Carlo (MCMC) \cite{chen, cryan2003polynomial, cryan2006rapidly, diaconissturmfels, fishman2012counting}, in which one starts with a contingency table and randomly changes a small number of entries in a way that does not affect the row sums and column sums, thereby obtaining a slightly different contingency table. After sufficiently many moves, the new table will be almost independent of the starting table; repeating this process yields \emph{almost} uniform samples from the set of $(r,c)$-contingency tables. The downside to this approach is that the number of steps one needs to wait can be quite large, see for example \cite{bezakova2}, and by the nature of MCMC, one must prescribe this number of steps \emph{before} starting, so the runtime is determined not by the minimum number of steps required but the minimum \emph{provable} number of steps required.

An alternative approach is Sequential Importance Sampling (SIS) \cite{blitzstein2011sequential, chen, chen2006sequential, yoshida2011semigroups}, where one samples from a distribution with computable deviation from uniformity, and weights samples by the inverse of their probability of occurring, to obtain unbiased estimates of any test statistic. Such techniques have proven to be quite fast, but the non-uniformity can present a problem \cite{bezakova2}: depending on the parameters $(r,c)$, the sample can be exponentially far from uniform, and thus the simulated distribution of the test statistic can be very different from the actual distribution despite being unbiased.

There are also extensive results pertaining to counting the number of $(r,c)$-contingency tables, see for example~\cite{Barvinok, BenderTables, Soules, BaldoniSilva, DeLoeraRational, BarvinokInequalities, BarvinokPermanents, GreenhillMcKay, BarvinokIntegerFlows, BarvinokApproximate, BarvinokHartigan, DiaconisGangolli}.  
While our exact sampling algorithms benefit from this analysis, the approximate sampling algorithm in Algorithm~\ref{approximate algorithm} does not require these formulas. 

The special case of $2\times n$ contingency tables has received particular attention in the literature, as it is relatively simple while still being interesting---many statistical applications of contingency tables involve an axis with only two categories (male/female, test/control, etc).  
An asymptotically uniform MCMC algorithm is presented in~\cite{dyergreenhill}. In addition,~\cite{kitajimamatsui} adapted the same chain using coupling from the past to obtain an exactly uniform sampling algorithm.  We describe an exactly uniform sampling algorithm in Section~\ref{main:sect:two} which has an overall lower runtime cost than both approaches and requires no complicated rejection functions nor lookup tables. 

\subsection{Approach}

The main tools we will use in this paper are rejection sampling~\cite{Rejection} and probabilistic divide-and-conquer (PDC)~\cite{PDC}. 

Rejection sampling is a powerful method by which one obtains random variates of a given distribution by sampling from a related distribution, and rejecting observations with probability in proportion to their likelihood of appearing in the target distribution~\cite{Rejection}. 
For many cases of interest, the probability of rejection is in $\{0,1\}$, in which case we sample repeatedly until we obtain a sample that lies inside the target set; this is the idea behind the \emph{exact} Boltzmann sampler~\cite{Boltzmann, Duchon:2011aa}. 

In other applications of rejection sampling, the probability of rejection is in the interval $[0,1]$, and depends on the outcome observed. 
Several examples of this type of rejection sampling are presented in~\cite[Section~3.3]{PDC}; see also~\cite{PDCDSH}. 

PDC is an exact sampling technique which appropriately pieces together samples from conditional distributions. 
The setup is as follows. 
Consider a sample space consisting of a cartesian product $\mathcal{A}\times\mathcal{B}$ of two probability spaces. 
We assume that the set of objects we wish to sample from can be expressed as the collection of pairs $\cL\big((A,B) \,\big|\, (A,B)\in E\big)$ for some $E \subset \mathcal{A}\times \mathcal{B}$, where
\begin{equation}\label{def AB}
  A \in \mathcal{A}, \ B \in \mathcal{B} \ \mbox{ have given distributions},
\end{equation}
\begin{equation}\label{indep AB}
  A , B \ \mbox{ are independent},
\end{equation}
and either 
\begin{enumerate}
	\item[(1)] $E$ is a measurable event of positive probability; or,
	\item[(2)] \begin{enumerate}
			\item[(i)] There is some random variable $T$ on $\mathcal A\times\mathcal B$ which is either discrete or absolutely continuous with bounded density such that $E = \{T=k\}$ for some $k \in \mbox{range}(T)$, and 
			\item[(ii)] For each $a\in\mathcal A$, there is some random variable $T_a$ on $\mathcal B$ which is either discrete or absolutely continuous with bounded density such that $\{b\in\mathcal B:(a,b)\in E\} = \{T_a = k_a\}$ for some $k_a \in \mbox{range}(T_a)$. 
		\end{enumerate}
	\end{enumerate}

We then sample from $\L(\,  (A,B)\, | \,(A,B) \in E)$ in two steps. 
\begin{enumerate}
\item Generate sample from $\L(A\, |\, (A,B)\in E),$ call it $x$.
\item Generate sample from $\L(B\, |\, (x,B) \in E),$ call it $y$.
\end{enumerate}
The PDC lemmas,~\cite[Lemma~2.1]{PDC} in case (1), and~\cite[Lemma~2.1 and Lemma~2.2]{PDCDSH} in case (2), imply that the pair $(x,y)$ is an exact sample from ${\L(\,  (A,B)\, | \,(A,B) \in E)}$. 

In Algorithm~\ref{mainalgorithm}, we utilize the well-known fact that for a geometric random variable $Z$ with parameter $1-q$, the random variable $\eta(q) := \mathbbm{1}(Z\textrm{ is odd})$ is Bernoulli with parameter $\frac{q}{1+q}$, which is \emph{independent} of $(Z-\eta)/2$, which is geometrically distributed with parameter $1-q^2$.
In our first case of interest, nonnegative integer-valued contingency tables, $q$ is of the form $q_j = c_j / (m+c_j)$, where $c_j$ is the $j$-th column sum, $j=1,2,\ldots, n$; see lemmas~\ref{lemma:uniform} and~\ref{lemma:q}. 

The use of rejection sampling is optimized by picking a distribution which is close to the target distribution. 
The random variable $\eta(q_j)$ serves as an a priori estimate for the true distribution of the least significant bit of an entry under the conditional distribution. 
Decomposing a geometric random variable by its bits appears to be a relatively recent technique for random sampling. 
It was effectively utilized in~\cite{PDC} for the random sampling of integer partitions of a fixed size~$n$, i.e., an exact Boltzmann sampler, with $O(1)$ expected number of rejections. 

After the least significant bit of each entry of the table is sampled, we return to the same sampling problem with reduced row sums and column sums. 
More importantly, after each step, we choose new parameters $q_j$, $j=1,2,\ldots,n$, based on these new column sums, which tilts the distribution more in favor of this new table. 
If we had sampled the geometric random variables directly, it would be equivalent to choosing one value of $q$ for the entire algorithm, and sampling from Bernoulli random variables with parameters $q/(1+q)$, $q^2/(1+q^2)$, $q^4/(1+q^4)$, etc. 
Using this new approach, we are sampling from Bernoulli random variables with parameters $q/(1+q)$, $q'/(1+q')$, $q''/(1+q'')$, etc., where $q'$, $q''$, etc., are chosen after each iteration, and more effectively target the current set of row sums and column sums.

\begin{remark}\label{quasi:remark}
One might define a new type of random variable, say, a \emph{quasi}-geometric random variable, denoted by $Q$, which is defined as
\[ Q(q,q',q'',\ldots) = \Bern\left( \frac{q}{1+q}\right) + 2\Bern\left(\frac{q'}{1+q'}\right) + 4 \Bern\left(\frac{q''}{1+q''}\right) + \ldots, \]
where all of the Bernoulli random variables are mutually independent. 
In our case, we choose $q'$ after an iteration of the algorithm completes with $q$. 
Subsequently, we use $q''$ after an iteration of the algorithm completes with $q'$, etc.
This quasi-geometric random variable has more degrees of freedom than the usual geometric random variable, although we certainly lose other nice properties which are unique to geometric random variables. 
\end{remark}

\begin{remark}\label{bit:remark}
In~\cite{Barvinok}, a question is posed to obtain the asymptotic joint distribution of a small subset of entries in a random contingency table, and an example of a specific table is presented which shows that using independent geometric random variables as the marginal distributions of small subsets of this joint density leads to joint distributions which are not indicative of the uniform measure over the set of such contingency tables. 
We surmise it may be more fruitful to consider the joint distribution of the $r$ least significant bits of the usual geometric distribution; or, alternatively, to consider a quasi-geometric random variable defined in Remark~\ref{quasi:remark} with given parameters $q$, $q'$, $q''$, etc.
\end{remark}

In addition to sampling from nonnegative \emph{integer}-valued tables, one may also wish to sample from nonnegative \emph{real}--valued tables with real--valued row sums and column sums. 
For a real-valued contingency table, the role of geometric random variables is replaced with exponential random variables. 
Instead of sampling from the smallest bit of each entry, we instead sample the fractional parts first, and what remains is an integer--valued table with integer--valued row sums and column sums. 

To obtain a good candidate distribution for the fractional part of a given entry, we utilize two well-known facts summarized in the lemma below.  
For real $x$, $\{x \}$ denotes the fractional part of $x$, and $\lfloor x \rfloor $ denotes the integer part of $x$, so that $x = \lfloor x \rfloor + \{x \}$. 

\begin{lemma}
Let $Y$ be an exponentially distributed random variable with parameter $\lambda>0$, then:
\begin{itemize}
\item the integer part, $\lfloor Y \rfloor$, and the fractional part, $\{Y\}$, are independent~\cite{Rejection, SteutelThiemann};
\item $\lfloor Y\rfloor$ is geometrically distributed with parameter $1-e^{-\lambda}$, and $\{Y\}$ has density $f_\lambda(x) = \lambda e^{-\lambda x} / (1-e^{-\lambda})$, $0\leq x < 1$. 
\end{itemize}
\end{lemma}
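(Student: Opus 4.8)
The plan is to compute the joint law of $(\lfloor Y\rfloor, \{Y\})$ directly and observe that it factors. For a nonnegative integer $n$ and $0 \le x < 1$, the event $\{\lfloor Y\rfloor = n,\ \{Y\} \le x\}$ coincides exactly with $\{n \le Y \le n+x\}$, so I would evaluate the elementary integral
\[
  \P(\lfloor Y\rfloor = n,\ \{Y\} \le x) = \int_n^{n+x} \lambda e^{-\lambda y}\,dy = e^{-\lambda n}\bigl(1 - e^{-\lambda x}\bigr).
\]
The crucial observation is that the right-hand side splits as a factor $e^{-\lambda n}$ depending only on $n$ times a factor $(1 - e^{-\lambda x})$ depending only on $x$; this product structure is the single fact from which both the independence claim and the identification of the two marginals will follow.

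To read off the marginal of $\lfloor Y\rfloor$, I would let $x \uparrow 1$ (equivalently integrate over the full unit interval), obtaining $\P(\lfloor Y\rfloor = n) = e^{-\lambda n}(1 - e^{-\lambda})$. Writing $q := e^{-\lambda}$, this is $(1-q)\,q^{\,n}$, which is precisely the geometric pmf on $\{0,1,2,\dots\}$ with success parameter $1 - e^{-\lambda}$. For the fractional part, I would instead sum the joint probability over all $n$, giving $\P(\{Y\} \le x) = (1 - e^{-\lambda x})\sum_{n\ge 0} e^{-\lambda n} = (1 - e^{-\lambda x})/(1 - e^{-\lambda})$; differentiating in $x$ yields the stated density $f_\lambda(x) = \lambda e^{-\lambda x}/(1 - e^{-\lambda})$ on $[0,1)$.

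Independence then follows by direct comparison: multiplying the two marginals, $e^{-\lambda n}(1 - e^{-\lambda})$ and $(1 - e^{-\lambda x})/(1 - e^{-\lambda})$, reproduces exactly the joint value $e^{-\lambda n}(1 - e^{-\lambda x})$ computed above, and this holds for every $n$ and every $x$. Since the joint distribution factors into the product of the marginals on a generating family of events, $\lfloor Y\rfloor$ and $\{Y\}$ are independent. There is no real obstacle here — the entire argument rests on the one integral evaluation and the observation that it factors, after which tracking the normalizing constant $1/(1 - e^{-\lambda})$ is routine. The only point requiring mild care is matching conventions: confirming that the geometric parameter is the success probability $1 - e^{-\lambda}$ with support $\{0,1,2,\dots\}$ rather than its complement, so that the identification with the claimed law is exact.
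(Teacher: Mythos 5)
Your computation is correct and complete: the single integral $\int_n^{n+x}\lambda e^{-\lambda y}\,dy = e^{-\lambda n}(1-e^{-\lambda x})$ does all the work, the factorization over the generating rectangles $\{n\}\times[0,x]$ gives independence, and your identification of the geometric parameter as the success probability $1-e^{-\lambda}$ matches the paper's convention $\Pr(\Geo(q)=k)=(1-q)q^k$ with $q=e^{-\lambda}$. The paper itself offers no proof of this lemma --- it is stated as a pair of well-known facts with citations --- so there is nothing to compare against; your argument is the standard one and fills that gap correctly.
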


In this case, the recommended approach is the following: 

\begin{enumerate}
\item Sample the fractional part of the entries of the table first, according to the conditional distribution; 
\item Sample the remaining integer part of the table. 
\end{enumerate}

If an exact sample of each of the items above can be obtained, then an application of PDC implies that the sum of the entries of the two tables has the uniform distribution over nonnegative real--valued tables. 

For $2 \times n$ tables, Algorithm~\ref{PDC DSH 2 by n} is particularly elegant, as it is an exact sampling algorithm which does not require the computation of \emph{any} rejection functions nor lookup tables, and instead exploits a property of conditional distributions. 
We also present several alternative exact sampling PDC algorithms for tables with entries with given marginal distributions, subject to relatively minor conditions.  
 
Finally, by dropping the requirement that an algorithm should produce a sample uniformly at random from a given set of contingency tables, we provide an approximate sampling algorithm in Algorithm~\ref{approximate algorithm} for which any given bit is in its incorrect proportion by at most $m+2$, where $m$ is the number of rows; see Lemma~\ref{runtime:lemma}.
An approximate sampling algorithm for the fractional part of the entries of a given table is given in Algorithm~\ref{alternative fractional real}.

The paper is organized as follows. 
Section~\ref{sect:main} contains the main algorithms for the random sampling of $(r,c)$-contingency tables, including an approximate sampling algorithm which does not rely on any enumeration formulas. 
Section~\ref{sect:method} contains relevant properties of contingency tables. 
Section~\ref{sect:proofs} contains the proof that Algorithm~\ref{mainalgorithm} is uniform over all tables. 
Section~\ref{sect:two} discusses the PDC algorithm in the special case when there are exactly $2$ rows. 
Section~\ref{sect:other} formulates similar PDC algorithms for a joint distribution with given marginal probability distributions under mild restrictions.

\section{Main Algorithms}\label{sect:main}

\subsection{Integer-valued tables}
\label{int:tables}
Our first algorithm is Algorithm~\ref{mainalgorithm} below, which uniformly samples from the set of nonnegative integer-valued contingency tables with any given size and row sums and column sums. 
The algorithm is recursive (though our implementation is iterative). 
Each step of the algorithm samples the least significant bit of a single entry in the table, \emph{in proportion to its prevalence under the conditional distribution}, via rejection sampling, and once the least significant bit of all entries in the table have been sampled, all of the row sums and column sums are reduced by at least a factor of two. 
The algorithm repeats with the next significant bit, etc., until all remaining row sums and column sums are 0. 

For a given set of row sums $r = (r_1, \ldots, r_m)$ and column sums $c = (c_1, \ldots, c_n)$, let $\Sigma(r,c)$ denote the number of $(r,c)$-contingency tables. 
Let $\mathcal{O}$ denote an $m\times n$ matrix with entries in $\{0,1\}$.  
For any set of row sums and column sums $(r,c)$, let $\Sigma(r,c,\mathcal{O})$ denote the number of $(r,c)$-contingency tables with entry $(i,j)$ forced to be even if the $(i,j)$th entry of $\mathcal{O}$ is 1, and no restriction otherwise. 
Let $\mathcal{O}_{i,j}$ denote the matrix which has entries with value $1$ in the first $j-1$ columns, and entries with value $1$ in the first $i$ rows of column $j$, and entries with value 0 otherwise. 

Define for $1 \leq i \leq m-2, 1 \leq j \leq n-2$, 
\begin{equation}\label{rejection:ij}\small
 f(i,j,k,r,c) :=  \frac{\Sigma\left( \begin{array}{l}(\ldots,r_{i}-k,\ldots), \\ (\ldots,c_{j}-k,\ldots), \\ \mathcal{O}_{i,j}\end{array} \right)}{\Sigma\left( \begin{array}{l}(\ldots,r_{i}-1,\ldots), \\ (\ldots,c_{j}-1,\ldots), \\ \mathcal{O}_{i,j}\end{array} \right) + \Sigma\left( \begin{array}{l}(\ldots,r_{i},\ldots), \\ (\ldots,c_{j},\ldots), \\ \mathcal{O}_{i,j}\end{array} \right)}.
\end{equation}

When $j = n-1$, we have a slightly different expression.  
Let $q_j := \frac{c_j}{m+c_j}$, and 
let $y_j := q_j^{-1} = 1+\frac{m}{c_j}$.  
Let $b(k)$ be such that $r_i-k-b(k)$ is even.  Then we define for $1 \leq i \leq m-2,$
\begin{equation} \label{equation:in} f(i,n-1,k,r,c) := \end{equation}
\begin{equation*}\small \frac{\Sigma\left( \begin{array}{l}(\ldots,r_{i}-k-b(k),\ldots), \\ (\ldots,c_{n-1}-k,c_n-b(k)), \\ \mathcal{O}_{i,n}\end{array} \right)\,\cdot\, y_n^{b(k)}}{\Sigma\left( \begin{array}{l}(\ldots,r_{i}-1-b(1),\ldots), \\ (\ldots,c_{n-1}-1,c_n-b(1)), \\ \mathcal{O}_{i,n}\end{array} \right)\,\cdot\, y_n^{b(1)}+\Sigma\left( \begin{array}{l}(\ldots,r_{i}-b(0),\ldots), \\ (\ldots,c_{n-1},c_n-b(0)), \\ \mathcal{O}_{i,n}\end{array} \right)\,\cdot\, y_n^{b(0)}}. 
\end{equation*}
When $i=m-1$, let $v(k)$ be such that $c_j - k-v(k)$ is even.  Then we define for $1 \leq j \leq n-2,$
\begin{equation}\small \label{rejection:mj}
f(m-1,j,k,r,c) := 
\end{equation}
\begin{equation*}
 \frac{\Sigma\left( \begin{array}{l} (\ldots,r_{m-1}-k,r_m-v(k)), \\ (\ldots,c_{j}-k-v(k),\ldots), \\ \mathcal{O}_{m,j}\end{array}\right)\,\cdot\, y_{j}^{v(k)}}{\Sigma\left( \begin{array}{l} (\ldots,r_{m-1}-1,r_m-v(1)), \\ (\ldots,c_{j}-1-v(1),\ldots), \\ \mathcal{O}_{m,j}\end{array}\right)\,\cdot\, y_{j}^{v(1)}+\Sigma\left( \begin{array}{l} (\ldots,r_{m-1},r_m-v(0)), \\ (\ldots,c_{j}-v(0),\ldots), \\ \mathcal{O}_{m,j}\end{array}\right)\,\cdot\, y_{j}^{v(0)}}. 
\end{equation*}
Finally, when $i=m-1$ and $j=n-1$, let $v(k)$ be such that $c_{n-1} - k-v(k)$ is even, let $\gamma(k)$ be such that $r_{m-1}-k-\gamma(k)$ is even, and let $b(k)$ be such that $c_n-\gamma-b(k)$ is even.  Then we define
\begin{equation*}
A \equiv \Sigma\left(\begin{array}{l}(\ldots,r_{m-1}-k-\gamma(k),r_m-v(k)-b(k)), \\ (\ldots,c_{n-1}-k-v(k),c_n-\gamma(k)-b(k)), \\ \mathcal{O}_{m,n}\end{array} \right)\,\cdot \, y_{n-1}^{v(k)}\, y_n^{\gamma(k) + b(k)},
\end{equation*}
\begin{equation*}
B \equiv \Sigma\left(\begin{array}{l}(\ldots,r_{m-1}-1-\gamma(1),r_m-v(1)-b(1)), \\ (\ldots,c_{n-1}-k-v(1),c_n-\gamma(1)-b(1)), \\ \mathcal{O}_{m,n}\end{array} \right)\,\cdot \, y_{n-1}^{v(1)}\, y_n^{\gamma(1) + b(1)},
\end{equation*}
\begin{equation*}
C \equiv \Sigma\left(\begin{array}{l}(\ldots,r_{m-1}-\gamma(0),r_m-v(0)-b(0)), \\ (\ldots,c_{n-1}-v(0),c_n-\gamma(0)-b(0)), \\ \mathcal{O}_{m,n}\end{array} \right)\,\cdot \, y_{n-1}^{v(0)}\, y_n^{\gamma(0) + b(0)},
\end{equation*}
\begin{equation}\label{equation:mn} f(m-1,n-1,k,r,c) :=  \frac{A}{B+C}. \end{equation}

The algorithm itself is then simple and straightforward. 

\begin{algorithm}[H]
\caption{Generation of uniformly random $(r,c)$-contingency table}
\label{mainalgorithm}
\begin{algorithmic}[1]
\STATE $M \leftarrow \max( \max_i r_i, \max_j c_j)$.
\STATE $t \leftarrow$ \mbox{$m \times n$ table with all 0 entries}.
\FOR {$b = 0,1,\ldots, \lceil\log_2(M)\rceil$}

   \STATE Let $\sigma_R$ denote any permutation such that $\sigma_R \circ r$ is in increasing order.
   \STATE Let $\sigma_C$ denote any permutation such that $\sigma_C \circ c$ is in increasing order.

   \STATE $r \leftarrow \sigma_R \circ r$.   
   \STATE $c \leftarrow \sigma_C \circ c$.

  \FOR {$j=1,\ldots,n-1$}\label{column:loop}

  \FOR {$i=1,\ldots,m-1$}\label{row:loop}
  \IF{$U < f(i,j,0,r,c)$} \label{reject:main}
     \STATE $\epsilon_{i,j} \leftarrow 0$
  \ELSE
     \STATE $\epsilon_{i,j} \leftarrow 1$
     \ENDIF 
  \STATE $r_i \leftarrow r_i - \epsilon_{i,j}$.      
  \STATE $c_j \leftarrow c_j - \epsilon_{i,j}$.      
  \ENDFOR \label{column:loop:end}
  
  \STATE $\epsilon_{m,j} \leftarrow c_j  \mod 2$ \label{line:last:column}
  \STATE $c_j \leftarrow c_j - \epsilon_{m,j}$.
  \STATE $c_j \leftarrow c_j / 2 $
  
  \STATE $r_m \leftarrow r_m - \epsilon_{m,j}$.

  \ENDFOR \label{row:loop:end}
     \FOR{$i=1,\ldots,m$} \label{line:end:of:third}
         \STATE $\epsilon_{i,n} \leftarrow r_i \mod 2$ \label{line:last:column}
         \STATE $r_i \leftarrow r_i - \epsilon_{i,n}$
         \STATE $r_i \leftarrow r_i / 2$
         \STATE $c_n \leftarrow c_n - \epsilon_{i,n}$
      \ENDFOR\label{line:last:three}
      
       \STATE $c_n \leftarrow  c_n / 2 $. 

	\STATE $\epsilon \leftarrow \sigma_R^{-1}\circ {\bf \epsilon}$  (Apply permutation to rows)
	\STATE $\epsilon \leftarrow \sigma_C^{-1}\circ {\bf \epsilon}$  (Apply permutation to columns)

	\STATE $t \leftarrow t+2^b \epsilon$. \label{line:combine}
  \ENDFOR
\end{algorithmic}
\end{algorithm}

To summarize Algorithm~\ref{mainalgorithm}: sample the least significant bit of each entry in the table by its proportion under the conditional distribution, one by one, and combine them with the previously sampled bits using probabilistic divide-and-conquer (PDC)~\cite{PDC}, until all entries of the table have had their least significant bit sampled.  The next step is the observation that, conditional on having observed the least significant bits of the table, the remaining part of each entry is even, with even row sums and column sums, and so we can divide each of the entries by two, as well as the row sums and column sums, and we have a repeat of the same problem with reduced row sums and column sums. 
The final step at each iteration is an application of PDC, which implies that the current iteration can be combined with previous iterations as we have done in Line~\ref{line:combine}. 

\begin{remark}
The cost to evaluate $f$ at each iteration, or more precisely, \emph{the cost to decide Line~\ref{reject:main}}, is currently the main cost of the algorithm, which requires on average the leading two bits of the evaluation of $f$, see~\cite{KnuthYao}.
Fortunately, we do not need to evaluate the quantities exactly, and in fact typically we just need a few of the most significant bits.  
\end{remark}

\begin{remark}\label{Igor:remark}
Note that the random sampling of the least significant bits of the table at each iteration is \emph{not} equivalent to the random sampling of binary contingency tables. 
The task is considerably easier in general, since we do not have to obtain a fixed target at each iteration. 
\end{remark}

\begin{theorem}\label{main theorem}
Algorithm~\ref{mainalgorithm} 
requires an expected $O(m\, n\, \log(M))$ random bits, where $M$ is the largest row sum or column sum.
\end{theorem}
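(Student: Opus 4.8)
The plan is to count random bits by separating the deterministic control flow of Algorithm~\ref{mainalgorithm} from its few genuinely random steps. First I would observe that the outer loop executes a deterministic number of times, namely $\lceil \log_2(M)\rceil + 1 = O(\log M)$, independent of the random outcomes, since its bound depends only on $M$. Within a single pass of the outer loop the only operations that consume randomness are the $(m-1)(n-1)$ Bernoulli decisions made at Line~\ref{reject:main}, one for each entry with $1 \le i \le m-1$ and $1 \le j \le n-1$. Every remaining operation is deterministic given the current state: the permutations $\sigma_R,\sigma_C$ are fixed functions of the current row and column sums, and the last-row and last-column bits $\epsilon_{m,j}$ and $\epsilon_{i,n}$ are read off as the parities $c_j \bmod 2$ and $r_i \bmod 2$. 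Thus each outer pass draws exactly $(m-1)(n-1) = O(mn)$ fresh uniform variates, consuming random bits only in resolving the comparisons $U < f(i,j,0,r,c)$.

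Next I would bound the expected number of random bits needed to resolve a single comparison $U < f$, where $U$ is uniform on $[0,1)$ and $f\in[0,1]$ is a fixed threshold determined by the current (deterministic) state. Here I would invoke the standard lazy-evaluation / Knuth--Yao argument~\cite{KnuthYao}: generate the binary digits of $U$ one at a time and compare them against the binary expansion of $f$, halting at the first digit at which the two disagree, at which point the sign of $U-f$ is determined. The probability that the first $k$ digits of $U$ all agree with those of $f$ is $2^{-k}$, so the expected number of digits drawn is $\sum_{k\ge 0} 2^{-k} = 2 = O(1)$; the event $U=f$ has probability zero and may be resolved arbitrarily. The crucial point is that the statement counts random bits only, so the (possibly substantial) arithmetic cost of producing the bits of the threshold $f$ --- which the earlier remark identifies as the true computational bottleneck --- does not enter this bound; I require merely that successive bits of $f$ be available on demand.

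Finally I would combine the two estimates multiplicatively. The number of outer passes is deterministic, and within each pass the number of comparisons is deterministic, so by linearity of expectation (which holds regardless of any dependence between the per-comparison bit counts) the total expected number of random bits is
\[
  \big(\lceil \log_2 M\rceil + 1\big)\,(m-1)(n-1)\cdot \E[\text{bits per comparison}] \;\le\; 2\,\big(\lceil \log_2 M\rceil + 1\big)\,(m-1)(n-1),
\]
which is $O(mn\log M)$, as claimed.

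I expect the only delicate point to be the justification that each comparison costs $O(1)$ expected random bits, together with the careful bookkeeping that isolates random bits from arithmetic operations; once the $O(1)$-bits-per-decision estimate is in hand, the remainder is a direct product of two deterministic loop lengths with a constant, and requires no probabilistic input about the contingency-table structure itself.
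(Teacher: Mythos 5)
Your argument is correct and is essentially the paper's own reasoning: the paper never writes out a formal proof of this theorem, but the surrounding remarks attribute the $O(mn\log M)$ factor to sampling one bit per entry over $O(\log M)$ passes, and cite \cite{KnuthYao} for the fact that each comparison $U<f$ consumes on average about two random bits. Your write-up just makes this explicit, including the correct observation that only the $(m-1)(n-1)$ interior entries per pass consume randomness and that conditioning on the (random) evolution of $r,c$ does not affect the $O(1)$ expected-bits-per-decision bound.
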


\begin{remark}{\rm
Theorem~\ref{main theorem} is a statement about the inherent amount of randomness in the sampling algorithm, and \emph{not} a complete runtime cost of Algorithm~\ref{mainalgorithm}, which requires the computation of non-random quantities for which at present no polynomial-time algorithm is known. 
We note, however, that it is universal with respect to all possible row sums and column sums; in other words, if you could count, Algorithm~\ref{mainalgorithm} is an explicit, asymptotically efficient sampling algorithm. 
}\end{remark}

\begin{remark}{\rm
Assuming Conjecture~\ref{main:conjecture} below, Algorithm~\ref{mainalgorithm} is a simple, explicit polynomial-time algorithm for the uniform generation of $m\times n$ $(r,c)$-contingency tables.
It samples the least significant bit of each entry of the table one at a time, which is where the factor of $O(m\, n\, \log(M))$ is derived.  
It does \emph{not} require the exact calculation for the number of tables in advance, but rather requires that the leading $r$ bits of related tables can be computed exactly in time $o(2^r)$ times a polynomial in $m$ and $n$ and logarithmically in $M$, where $M$ is the largest row sum or column sum.  
}\end{remark}

\begin{conjecture}\label{main:conjecture}
Let $\mathcal{O}$ denote an $m\times n$ matrix with entries in $\{0,1\}$.  
For any set of row sums and column sums $(r,c)$, let $\Sigma(r,c,\mathcal{O})$ denote the number of $(r,c)$-contingency tables with entry $(i,j)$ forced to be even if the $(i,j)$th entry of $\mathcal{O}$ is 1, and no restriction otherwise. 
Let $M$ denote the largest row sum or column sum. 
Then for any $\mathcal{O}$, the leading $r$ bits of $\Sigma(r,c,\mathcal{O})$ can be computed in $o(2^r m^\alpha n^\beta \log^\gamma(M))$ time, for some absolute constants $\alpha, \beta, \gamma \geq 0$. 
\end{conjecture}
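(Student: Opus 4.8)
\emph{Proposed approach.} Since $\Sigma(r,c,\mathcal{O})$ is a positive integer, knowing its leading $r$ bits is equivalent to computing it to relative error $O(2^{-r})$ (together with its bit-length $L$, which is trivially bounded by $L\le mn\log_2(M+1)$ and is pinned down by any such relative-error estimate). So the goal is a relative-error-$2^{-r}$ approximation in time $o(2^r m^\alpha n^\beta \log^\gamma M)$. The parity constraint is encoded cleanly in a generating function: writing $g_{ij}(z)=(1-z^2)^{-1}$ when $\mathcal{O}_{ij}=1$ and $g_{ij}(z)=(1-z)^{-1}$ otherwise, one has the coefficient extraction
\[
  \Sigma(r,c,\mathcal{O}) \;=\; [\,x^r y^c\,]\,\prod_{i=1}^m\prod_{j=1}^n g_{ij}(x_i y_j),
\]
where $x^r:=\prod_i x_i^{r_i}$ and $y^c:=\prod_j y_j^{c_j}$. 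The plan is to combine two complementary tools keyed to this single representation: a saddle-point asymptotic evaluation, efficient when the entries are large, and an exact enumeration, affordable when the budget $2^r$ is large.

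For the asymptotic tool, I would pass to the contour-integral form, setting $x_i=\rho_i e^{i\theta_i}$ and $y_j=\sigma_j e^{i\phi_j}$ with radii $\rho_i,\sigma_j$ at the real positive saddle of the log-integrand $\Phi=\sum_{i,j}\log g_{ij}(x_iy_j)-\sum_i r_i\log x_i-\sum_j c_j\log y_j$. This reduces $\Sigma(r,c,\mathcal{O})$ to an $(m+n)$-dimensional angular integral whose integrand concentrates near the origin; the Barvinok--Hartigan maximum-entropy characterization identifies the saddle as the maximum-entropy table with the prescribed margins and parities. Laplace's method then gives a leading Gaussian term governed by the $(m+n)\times(m+n)$ Hessian of $\Phi$, whose determinant is computable in $O((m+n)^3)$ time, and the higher-order corrections are finite sums of products of mixed partial derivatives of $\Phi$ contracted against the inverse Hessian, each evaluable in $\mathrm{poly}(m,n)$ time. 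The idea is that $O(r/\log M)$ such terms suffice to reach relative error $2^{-r}$ when $M$ is large.

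For the exact tool, when $r$ approaches the bit-length $L$ the budget $2^r\geq\Sigma(r,c,\mathcal{O})$ is generous, and one may enumerate directly from the same generating function—by column-by-column dynamic programming over partial row-sum vectors, or by a Barvinok-style decomposition of $\prod_{i,j}g_{ij}$ into a short signed sum of rational functions—reading off the parity restriction from the choice of factor $g_{ij}$. The task in this regime is purely to bound the enumeration cost against $o(2^r)$ with polynomial dependence on $m$ and $n$.

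The main obstacle is the \emph{intermediate regime}, together with uniformity in $m$ and $n$. The Laplace expansion is asymptotic, not convergent, so for parameters in which $M$ is small relative to $m,n$ it can deliver only on the order of $M$ correct bits before the terms cease to decrease; yet when $r$ exceeds this cap while still being much smaller than $L\asymp mn\log M$, the exponential budget $2^r$ is too small to cover exact enumeration. Bridging the range $M\ll r\ll mn\log M$ is the crux. A proof would require either (i) rigorous, computable truncation-error bounds on the parity-constrained saddle expansion that are uniform in all parameters and remain valid once $M\gtrsim\mathrm{poly}(m,n)$, matched to an exact method covering the complementary small-$M$ range with only $\mathrm{poly}(m,n)$ overhead—noting that Barvinok's lattice-point algorithm, while polynomial in $\log M$, is super-polynomial in the dimension $(m-1)(n-1)$ and so does not directly supply this; or (ii) a single genuinely convergent quadrature of the angular integral whose error decays like $2^{-r}$ after $o(2^r)\,\mathrm{poly}(m,n,\log M)$ integrand evaluations. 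I expect route (i) to be the more tractable, with the hardest component being the uniform control of the tail of the saddle-point expansion for the even-entry generating function.
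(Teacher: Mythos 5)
You should note at the outset that the statement you were asked to prove is stated in the paper as Conjecture~\ref{main:conjecture}: the paper offers \emph{no proof} of it, and the remark immediately following it flags precisely the obstructions you ran into --- the Barvinok--Hartigan asymptotic formula is computable in $O(m^2n^2)$ time but it is unclear whether its error bounds can be made quantitative, and the Canfield--McKay asymptotic expansion, being an Edgeworth-type expansion, is computable in polynomial time but not guaranteed to converge as more terms are summed. So there is no ``paper proof'' to compare against, and the only correct assessment of your proposal is whether it constitutes a proof on its own. It does not, and you say so yourself: the crux you identify --- bridging the intermediate regime $M \ll r \ll mn\log M$, where the saddle-point expansion saturates after order-$M$ correct bits (an asymptotic, non-convergent series, exactly the Canfield--McKay difficulty) while the budget $2^r$ is still far too small for exact enumeration, and where Barvinok's lattice-point algorithm is disqualified by its super-polynomial dependence on the dimension $(m-1)(n-1)$ --- is left entirely open. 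Neither your route (i) (uniform, computable truncation-error bounds for the parity-constrained saddle expansion) nor route (ii) (a convergent quadrature of the angular integral with the stated cost) is carried out, so the argument has a genuine gap and the statement remains a conjecture after your writeup just as before it.

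That said, your preparatory reductions are sound and worth preserving: the reduction of ``leading $r$ bits'' to a relative-error-$2^{-r}$ approximation (with the bit-length bounded by $mn\log_2(M+1)$), and the encoding of the parity constraints via the generating function $\prod_{i,j} g_{ij}(x_i y_j)$ with $g_{ij}(z)=(1-z^2)^{-1}$ on forced-even entries and $(1-z)^{-1}$ otherwise, are both correct and give the natural analytic object to which any future saddle-point or quadrature attack would apply. Your two-regime decomposition (asymptotics for large entries, exact counting when $2^r$ dominates) is a reasonable research program and correctly locates where the difficulty lives; but as a referee I must classify the submission as a proof outline with an acknowledged missing step, not a proof, and the missing step is the entire content of the conjecture.
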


\begin{remark}{\rm
An asymptotic formula for a large class of $(r,c)$-contingency tables is given in~\cite{BarvinokHartigan} which can be computed in time $O(m^2n^2)$, but it is unclear whether the bounds can be made quantitative, or whether the approach can be extended to provide an asymptotic expansion.  An asymptotic expansion is given in~\cite[Theorem 5]{CanfieldMcKay} for binary contingency tables with equal row sums and equal column sums which is computable in polynomial time, although because it is based on Edgeworth expansions it is not guaranteed to converge as more and more terms are summed. 
}\end{remark}

An equivalent formulation of the rejection function in Algorithm~\ref{mainalgorithm} is given by a joint distribution of random variables, which we now describe; see also~\cite{BarvinokHartigan}.

\begin{tabular}{p{1.6cm}p{10cm}}
$\Geo(q)$ & Geometric distribution with probability of success $1-q$, for $0<q<1$, with $\Pr\left(\Geo(q)=k\right)=(1-q)q^k$, $k=0,1,2,\ldots\, $. \\[.15cm]
$\NB(m, q)$ & Negative binomial distribution with parameters $m$ and $1-q$, given by the sum of $m$ independent $\Geo(q)$ random variables, with 
\[\Pr(\NB(m,q) = k) = \binom{m+k-1}{k}(1-q)^m q^k.\] \\[.15cm]
U 
& Uniform distribution on $[0,1]$. We will also use $U$ in the context of a random variable; $U$ should be considered independent of all other random variables, including other instances of $U$. \\[.15cm]
$\Bern(p)$ & Bernoulli distribution with probability of success $p$. Similarly to $U$, we will also use it as a random variable. \\[.15cm]
$\xi_{i,j}(q)$ & $\Geo(q)$ random variables which are independent for distinct pairs $(i,j)$, $1\le i\le m$, $1\le j\le n$. \\[.15cm]
$\xi_{i,j}'(q,c_j)$ & Random variables which have distribution \[\L\left(\xi_{i,j}(q) \bigg| \sum_{\ell=1}^m \xi_{\ell,j}(q) = c_j\right),\] and are independent of all other random variables $\xi_{i,\ell}(q)$ for $\ell \ne j$. \\[.15cm]
$2\xi_{i,j}''(q,c_j)$ & Random variables which have distribution \[\L\left(2\xi_{i,j}(q^2) \bigg| \sum_{\ell=1}^m 2\xi_{\ell,j}(q^2) = c_j \right)\] 
and are independent of all other random variables $\xi_{i,\ell}(q)$ for $\ell \ne j$. \\[.15cm]
$\eta'_{i,j,s}(q, c_j)$ & Random variables which have distribution 
\[ \L\left( \xi_{i,j}(q) \middle| \sum_{\ell=1}^s 2\xi_{\ell,j}(q^2) + \sum_{\ell=s+1}^m \xi_{\ell,j}(q) = c_j  \right), \]
and are independent of all other random variables $\xi_{i,\ell}(q)$ for $\ell \ne j$. \\[.15cm]
$2\eta''_{i,j,s}(q, c_j)$ & \ \ Random variables which have distribution 
\[ \L\left( 2\xi_{i,j}(q^2) \middle| \sum_{\ell=1}^s 2\xi_{\ell,j}(q^2) + \sum_{\ell=s+1}^m \xi_{\ell,j}(q) = c_j \right), \]
and are independent of all other random variables $\xi_{i,\ell}(q)$ for $\ell \ne j$. \\[.15cm]
$\mathbf q$ & The vector $(q_1, \ldots, q_n)$, where $0<q_i<1$ for all $i=1,\ldots,n$. \\[.15cm]
$R_i$ & = $(\xi_{i,1}, \xi_{i,2}, \ldots, \xi_{i,n})$ for $i=1,2,\ldots,m$. \\[.15cm]
$C_j$ & = $(\xi_{1,j}, \xi_{2,j}, \ldots, \xi_{m,j})$ for $j=1,2,\ldots,n$. \\[.15cm]
\end{tabular}
 
The rejection probability is then proportional to 
\begin{align*}
\frac{ \P(\epsilon_{1,1} = k | E)}{\P(\epsilon_{1,1} = k)} & = \P(E | \epsilon_{1,1}=k) \\
& \propto 
\P\left( \begin{array}{ll} 
2\, \xi_{i,1}(q_1^2) + \sum_{i=2}^m \xi_{i,1}(q_1) = c_1 - k,& 2\xi_{1,1}(q_1^2) + \sum_{j=2}^{n} \xi_{1,j}(q_j) = r_1- k,\\
C_2 = c_2, & R_2 = r_2\\
\qquad \vdots & \qquad \qquad \qquad \qquad \vdots \\
C_n = c_n, & R_m = r_m
\end{array}\right)  \\
& \propto
 \P\left( \begin{array}{l}  
2\eta'_{1,1,1}(q_1,c_1) + \sum_{j=2}^{n} \xi'_{1,j}(q_1,c_1) = r_1- k \\
R_2=r_2 \\ \vdots \\ R_m = r_m \end{array}
\middle| \begin{array}{l} C_1 = c_1 \\ \epsilon_{1,1} = k \\ C_2 = c_2 \\ \vdots \\ C_n = c_n  \end{array} \right) \\
& \ \ \ \  \times  \P\left( 2\, \xi_{i,1}(q_1^2) + \sum_{i=2}^m \xi_{i,1}(q_1) = c_1 - k \middle| \begin{array}{l} \epsilon_{1,1} = k \\ C_2 = c_2  \\ \vdots \\ C_n = c_n\end{array} \right). \\
& \propto
 \P\left( \begin{array}{l}  
2\eta'_{1,1,1}(q_1,c_1) + \sum_{j=2}^{n} \xi'_{1,j}(q_1,c_1) = r_1- k \\
R_2=r_2 \\ \vdots \\ R_m = r_m \end{array}
\middle| \begin{array}{l} C_1 = c_1 \\ \epsilon_{1,1} = k \\ C_2 = c_2 \\ \vdots \\ C_n = c_n  \end{array} \right) \\
& \ \ \ \  \times  \P\left( 2\, \xi_{i,1}(q_1^2) + \sum_{i=2}^m \xi_{i,1}(q_1) = c_1 - k \right). \\
\end{align*}

The last expression can be substituted for the rejection function $f$ when $i=j=1$, i.e., we have just shown that
\ignore{
 \begin{align}\label{f11}
 f(1,1,k, r, c) \propto \ & 
 \P\left( \begin{array}{l}  
2\xi'''_{1,1}(q_1^2) + \sum_{j=2}^{n} \xi'_{1,j}(q_1) = r_1- k \\
R_2'=r_2 \\ \vdots \\ R_m' = r_m \end{array}
\middle| \begin{array}{l} C_1 = c_1 \\ \epsilon_{1,1} = k \\ C_2 = c_2 \\ \vdots \\ C_n = c_n  \end{array} \right) \\
\nonumber & \ \ \ \  \times  \P\left( 2\, \xi_{i,1}(q_1^2) + \sum_{i=2}^m \xi_{i,1}(q_1) = c_1 - k \right). \\
\end{align}}
 \begin{align}\label{f11}
 f(1,1,k, r, c)  \propto\ &
 \P\left( \begin{array}{lll}  
\eta_{1,1,1}'(q_1, c_1) &+  \sum_{\ell=2}^{n} \xi'_{1,\ell}(q_\ell, c_\ell) &= r_1-k \\
\eta_{2,1,1}''(q_1, c_1) &+  \sum_{\ell=2}^{n} \xi'_{2,\ell}(q_\ell, c_\ell) &= r_2 \\
\qquad \vdots \\
\eta_{m,1,1}''(q_1, c_1) &+  \sum_{\ell=2}^{n} \xi'_{m,\ell}(q_\ell, c_\ell) &= r_{m} \\
\end{array}\right) \\
\nonumber & \ \ \ \  \times  \P\left( 2\, \xi_{i,j}(q_j^2) + \sum_{i=2}^m \xi_{i,j}(q_j) = c_j - k \right). 
\end{align} 

Algorithm~\ref{mainalgorithm} samples $\L(\epsilon_{1,1} | E)$, $\L(\epsilon_{2,1} | E, \epsilon_{1,1})$, $\L(\epsilon_{3,1} | E,\epsilon_{1,1},\epsilon_{2,1})$, etc., until the entire first column is sampled.  Then it starts with the top entry of the second column and samples the least significant bits from top to bottom according to the conditional distribution.  
Generalizing equation~\eqref{f11}, we have

 \begin{align}\label{fij}
\hspace{-0.25in} f(i,j,k, r, c)  \propto \ &
 \P\left( \begin{array}{llll}  
\sum_{\ell=1}^{j-1}2\xi''_{1,\ell}(q_\ell^2, c_\ell) &+ \eta_{1,j,i}'(q_j, c_j) &+  \sum_{\ell=j+1}^{n} \xi'_{1,j}(q_\ell, c_\ell) &= r_1 \\
\sum_{\ell=1}^{j-1}2\xi''_{2,\ell}(q_\ell^2, c_\ell) &+ \eta_{2,j,i}'(q_j, c_j) &+  \sum_{\ell=j+1}^{n} \xi'_{2,j}(q_\ell, c_\ell) &= r_2 \\
\qquad \vdots \\
\sum_{\ell=1}^{j-1}2\xi''_{i-1,\ell}(q_\ell^2, c_\ell) &+ \eta_{i-1,j,i}'(q_j, c_j) &+  \sum_{\ell=j+1}^{n} \xi'_{i-1,j}(q_\ell, c_\ell) &= r_{i-1} \\
\sum_{\ell=1}^{j-1}2\xi''_{i,\ell}(q_\ell^2, c_\ell) &+ \eta_{i,j,i}''(q_j, c_j) &+  \sum_{\ell=j+1}^{n} \xi'_{i,j}(q_\ell, c_\ell) &= r_{i}-k \\
\sum_{\ell=1}^{j-1}2\xi''_{i+1,\ell}(q_\ell^2, c_\ell) &+ \eta_{i+1,j,i}''(q_j, c_j) &+  \sum_{\ell=j+1}^{n} \xi'_{i+1,j}(q_\ell, c_\ell) &= r_{i+1} \\
\qquad \vdots \\
\sum_{\ell=1}^{j-1}2\xi''_{m,\ell}(q_\ell^2, c_\ell) &+ \eta_{m,j,i}''(q_j, c_j) &+  \sum_{\ell=j+1}^{n} \xi'_{m,j}(q_\ell, c_\ell) &= r_{m} \\
\end{array}\right) \\
\nonumber & \ \ \ \  \times  \P\left( \sum_{\ell = 1}^{i} 2\, \xi_{\ell,j}(q_j^2) + \sum_{\ell=i+1}^m \xi_{\ell,j}(q_\ell) = c_j - k \right). 
\end{align} 

Note that $\sum_{\ell=1}^i \xi_{\ell,j}(q_j^2)$ is the sum of $i$ i.i.d.~geometric random variables, and is hence a negative binomial distribution with parameters $i$ and $q_j^2$.  
We thus have the sum of two independent negative binomial distributions:
\[ \P\left( \sum_{\ell=1}^i 2\xi_{\ell,j}(q_j^2) + \sum_{\ell=i+1}^{m} \xi_{\ell,j}(q_j) = c_j- k\right) = \P\left( 2\NB(i,q_j^2) + \NB(m-i,q_j) = c_j - k\right). \]

We note that each random variable in the probability above has an explicitly computable and simple probability mass function, which we write below.
\begin{align}
\nonumber \Pr\left( \xi_{i,j}'(q,c_j) = k\right) & = \Pr\left(\xi_{i,j}(q)=k\right) \frac{\NB(m-1,q)\{c_j-k\}}{\NB(m,q)\{c_j\}} \\
\label{mass:1} & = \frac{\binom{(m-1)+(c_j-k)-1}{c_j-k}}{\binom{m+c_j-1}{c_j}}, \qquad k=0,1,\ldots,c_j.
 \end{align}
Denote by $\epsilon_{i,j}$ the observed value of the parity bit of $\xi_{i,j}$, $i=1,2,\ldots,m$, $j=1,2,\ldots,n$. Let $c_j' = \frac{c_j - \sum_{i=1}^m \epsilon_{i,j}}{2}.$  We have
\begin{align}
\nonumber \Pr\left( 2\xi_{i,j}''(q,c_j') = k\right) & = \Pr\left(2\xi_{i,j}(q^2)=k\right) \frac{\NB(m-1,q^2)\left\{c_j' - \frac{k}{2}\right\}}{\NB(m,q^2)\left\{c_j'\right\}} \\
\label{mass:2} & = \frac{\binom{(m-1)+\left(c_j'-\frac{k}{2}\right)-1}{c_j'-\frac{k}{2}}}{\binom{m+c_j'-1}{c_j'}}, \qquad k=0,2,4,\ldots, 2c_j'. 
\end{align}
Let $c_j'' = c_j - \sum_{\ell=1}^s \epsilon_{\ell,j}$, $j=1,2,\ldots,n$.  We have
\begin{equation}\label{mass:3}
 \P(2\eta''_{i,j,s}(q,c_j'') = k) = \P\left(2\xi_{i,j}(q^2) = k)\right) \  \frac{\P(2\NB(s-1,q^2) + \NB(m-s,q) = c_j'' -  k) }{\P(2\NB(s,q^2) + \NB(m-s,q) = c_j'')}, 
\end{equation}
for $k = 0, 2, \ldots, 2 \bigl\lfloor \frac{c_j''}{2}\bigr\rfloor.$
\begin{equation}\label{mass:3}
 \P(\eta'_{i,j,s}(q,c_j'') = k) = \P\left(\xi_{i,j}(q) = k)\right) \  \frac{\P(2\NB(s,q^2) + \NB(m-s-1,q) = c_j'' -  k) }{\P(2\NB(s,q^2) + \NB(m-s,q) = c_j'')}, 
\end{equation}
for $k = 0, 1,\ldots, c_j''.$

\subsection{Approximate sampling of integer-valued tables}
\label{sect:approx}

If an approximate sampling algorithm for $(r,c)$-contingency tables is deemed acceptable, Lemma~\ref{runtime:lemma} below demonstrates that the least significant bit of entry $(i,j)$ is closely distributed as $\L\left( \Bern\left(\frac{q_j}{1+q_j}\right)\right)$, with $q_j = \frac{c_j}{m+c_j}$, and so one could sample the least significant bits of the table one at a time according to this \emph{unconditional} distribution, without applying any other rejections other than the parity constraints, and then apply the recursion until all row sums and column sums are 0.
The following lemma shows that there is a quantitative limit to the bias introduced by this and related approaches. 

\ignore{
The overall runtime of Algorithm~\ref{mainalgorithm} is difficult to cost directly, owing to its recursive nature and dependence on the row sums and column sums, which change unpredictably at each iteration. 
Fortunately, since the rejection step is only over two states, one of them is necessarily accepted with probability 1, and we can bound the total runtime of a given rejection step by the expected number of rejections before we observe the outcome which is accepted with probability 1. 
Our estimate for total runtime cost in Lemma~\ref{runtime:lemma} below is therefore pessimistic, and can possibly be improved by a more careful analysis. 
}
\begin{lemma}\label{runtime:lemma}
Any algorithm which uses $\L\left( \Bern\left(\frac{q_j}{1+q_j}\right)\right),$ where $q_j = \frac{c_j}{m+c_j}$, as the surrogate distribution for $\L(\epsilon_{i,j} | E)$ in rejection sampling, assuming each outcome in $\{0,1\}$ has a \emph{positive} probability of occurring, accepts a bit with an incorrect proportion bounded by at most $m+2$, where $m$ is the number of rows. 
\end{lemma}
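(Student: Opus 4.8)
The plan is to bound, for each outcome $k\in\{0,1\}$, the ratio of the true conditional probability $\P(\epsilon_{i,j}=k\mid E)$ to the surrogate probability, and to show it never exceeds $m+2$. Write $g_k:=\P(\epsilon_{i,j}=k)$ for the surrogate weights. The first observation is that the surrogate $\Bern(q_j/(1+q_j))$ is exactly the \emph{unconditional} law of the parity bit $\epsilon_{i,j}$: since $\xi_{i,j}(q_j)\sim\Geo(q_j)$ has parity bit distributed as $\Bern(q_j/(1+q_j))$ (the elementary fact recalled earlier, that the odd/even indicator of $\Geo(q)$ is $\Bern(q/(1+q))$), we have $g_1=q_j/(1+q_j)$ and $g_0=1/(1+q_j)$. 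Because the entries are independent, conditioning on previously sampled bits does not alter this marginal, so no generality is lost in treating the surrogate as the plain marginal of $\epsilon_{i,j}$.

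I would then apply Bayes' rule to rewrite the object of interest purely in terms of the event $E$,
\[
\frac{\P(\epsilon_{i,j}=k\mid E)}{g_k}=\frac{\P(E\mid\epsilon_{i,j}=k)}{\P(E)},
\]
and use the deliberately crude lower bound
\[
\P(E)=\sum_{\ell\in\{0,1\}}g_\ell\,\P(E\mid\epsilon_{i,j}=\ell)\ \ge\ g_k\,\P(E\mid\epsilon_{i,j}=k),
\]
which holds simply because every summand is nonnegative. Combining the two displays gives $\P(\epsilon_{i,j}=k\mid E)/g_k\le 1/g_k$, reducing everything to the surrogate parameters alone.

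It then remains to evaluate $1/g_k$. One finds $1/g_0=1+q_j\le 2$ and $1/g_1=1+q_j^{-1}=2+m/c_j$. The hypothesis that both outcomes in $\{0,1\}$ occur with positive probability forces $c_j\ge 1$ (otherwise $\epsilon_{i,j}=1$ is impossible and $q_j=0$), so that $m/c_j\le m$ and hence $1/g_1\le m+2$. Taking the maximum over $k$ yields the claimed bound $m+2$, attained in the extreme case $c_j=1$, $k=1$.

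I do not expect a genuine obstacle here. The entire content lies in noticing that the trivial lower bound on $\P(E)$ already suffices, so that \emph{no} structural information about the polytope of contingency tables or the precise form of $E$ need enter — the argument is valid for any event $E$ of positive probability, and only one side of the ratio (true over surrogate) is controlled. The one point requiring care is the role of the positivity assumption, which is exactly what excludes the degenerate column $c_j=0$ and keeps $1/g_1$ finite.
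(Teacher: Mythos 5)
Your proof is correct and rests on exactly the same computation as the paper's: $\Pr\left(\Bern\left(\frac{q_j}{1+q_j}\right)=0\right)=\frac{m+c_j}{m+2c_j}\ge\frac{1}{2}$ and $\Pr\left(\Bern\left(\frac{q_j}{1+q_j}\right)=1\right)=\frac{c_j}{m+2c_j}\ge\frac{1}{m+2}$, with the positivity assumption supplying $c_j\ge 1$. The only difference is presentational: you make the phrase ``incorrect proportion'' precise as the ratio of the true conditional probability $\P(\epsilon_{i,j}=k\mid E)$ to the surrogate probability, bounded via the trivial inequality $\P(E)\ge \P(\epsilon_{i,j}=k)\,\P(E\mid \epsilon_{i,j}=k)$, whereas the paper phrases the same numerical bound as the expected wait for the state that is accepted with probability one.
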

\begin{proof}
We consider the worst case example, which is mild since there are only two possible outcomes for each entry in each iteration of the algorithm.

Since we are normalizing by the max over the two states, at least one of the states is accepted with probability 1, and so the total number of rejections is bounded from above by the wait time until this state is generated. 
Since the random variable generating the bit is Bernoulli with parameter $\frac{q_j}{1+q_j}$, we have
\[ \Pr\left( \Bern\left(\frac{q_j}{1+q_j}\right) = 0\right) =  \frac{m+c_j}{m+2c_j} \geq \frac{1}{2}. \]
\[ \Pr\left( \Bern\left(\frac{q_j}{1+q_j}\right) = 1\right) =  \frac{c_j}{m+2c_j} \geq \frac{1}{m+2}. \]
Thus, at worst we accept a bit with proportion $m+2$ times more likely than the other state. 
\end{proof}

We suggest a slightly more involved approach, which is to treat the row sum conditions as essentially independent, and reject each bit generated as if it was independent of all other rows. 
Algorithm~\ref{approximate algorithm} below uses Equation~\eqref{fij} and assumes that the rows are independent, which greatly simplifies the rejection probability formula. 
A rejection function is then given by 
\begin{align*}
& F(i,j,m,n,{\bf q}, r, c,k,{\bf \epsilon}) \\
 & := \Pr\left( \sum_{\ell=1}^{j-1} 2\xi''_{i,\ell}(q_\ell^2, c_j, \epsilon) + 2\eta''_{i,j,i} + \sum_{\ell=j+1}^{n} \xi'_{i,\ell}(q_\ell) = r_i - k\right) \\
 & \ \ \ \times \Pr\left( \sum_{\ell=1}^i 2\xi_{\ell,j}(q_j^2) + \sum_{\ell=i+1}^{m} \xi_{\ell,j}(q_j) = c_j - k\right), \qquad k \in \{0,1\}. \\
 \end{align*}
Note that the first term is a probability over a sum of \emph{independent} random variables, and as such can be computed using convolutions in time $O(M^2)$ or fast Fourier transforms in time $O(n\, M \log M)$. 
Further speedups are possible since only the first few bits of the function are needed on average. 

\begin{algorithm}[H]
\caption{Generation of approximately-uniformly random $(r,c)$-contingency table}
\label{approximate algorithm}
\begin{algorithmic}[1]
\STATE $M \leftarrow \max( \max_i r_i, \max_j c_j)$.
\STATE $t \leftarrow$ \mbox{$m \times n$ table with all 0 entries}.
\FOR {$b = 0,1,\ldots, \lceil\log_2(M)\rceil$}

   \STATE Let $\sigma_R$ denote any permutation such that $\sigma_R \circ r$ is in increasing order.
   \STATE Let $\sigma_C$ denote any permutation such that $\sigma_C \circ c$ is in increasing order.

   \STATE $r \leftarrow \sigma_R \circ r$.   
   \STATE $c \leftarrow \sigma_C \circ c$.

  \FOR {$j=1,\ldots,n-1$}\label{column:loop}

  \FOR {$i=1,\ldots,m-1$}\label{row:loop}
  \STATE $q_j \leftarrow c_j/(m+c_j).$ 
  \STATE $\epsilon_{i,j} \leftarrow \Bern(q_j/(1+q_j))$. \label{line:start-of-loop}

  \IF{$U > \frac{F(i,j,m,n,{\bf q}, r_i,c_j,\epsilon_{i,j})}{\max_{\ell\in\{0,1\}} F(i,j,m,n,{\bf q}, r_i,c_j,\ell)}$} \label{reject}
     \STATE \textbf{goto} Line~\ref{line:start-of-loop}.
     \ENDIF 
  \STATE $r_i \leftarrow r_i - \epsilon_{i,j}$.      
  \STATE $c_j \leftarrow c_j - \epsilon_{i,j}$.     
  \ENDFOR \label{column:loop:end}
  
  \STATE $\epsilon_{m,j} \leftarrow c_j  \mod 2$ \label{line:last:column}
  \STATE $c_j \leftarrow c_j - \epsilon_{m,j}$.
  \STATE $c_j \leftarrow c_j / 2 $
  
  \STATE $r_m \leftarrow r_m - \epsilon_{m,j}$.

  \ENDFOR \label{row:loop:end}
     \FOR{$i=1,\ldots,m$} \label{line:end:of:third}
         \STATE $\epsilon_{i,n} \leftarrow r_i \mod 2$ \label{line:last:column}
         \STATE $r_i \leftarrow r_i - \epsilon_{i,n}$
         \STATE $r_i \leftarrow r_i / 2$
         \STATE $c_n \leftarrow c_n - \epsilon_{i,n}$
      \ENDFOR\label{line:last:three}
      
       \STATE $c_n \leftarrow  c_n / 2 $. 

	\STATE $\epsilon \leftarrow \sigma_R^{-1}\circ {\bf \epsilon}$  (Apply permutation to rows)
	\STATE $\epsilon \leftarrow \sigma_C^{-1}\circ {\bf \epsilon}$  (Apply permutation to columns)

	\STATE $t \leftarrow t+2^b \epsilon$. \label{line:combine}
  \ENDFOR
\end{algorithmic}
\end{algorithm}

\begin{proposition}
Algorithm~\ref{approximate algorithm} requires on average $O(m^2 n\, \log M)$ random bits, with $O(m\, n^2\, M\log^2 M)$ arithmetic operations. 
\end{proposition}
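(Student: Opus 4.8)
The plan is to decouple the two bounds and charge each to a distinct resource: the random bits are consumed by the rejection trials inside the interior double loop, while the arithmetic operations are dominated by the evaluations of the rejection function $F$. In both analyses the outer loop over $b$ contributes a factor $\lceil\log_2 M\rceil + 1 = O(\log M)$, since at the end of each pass every surviving row and column sum is halved (the lines subtracting the parity and then dividing by two), so the loop terminates once the largest remaining sum reaches $0$.

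For the random bits, I would first fix one value of $b$ and count the entries that actually invoke rejection: the interior loops range over $1\le j\le n-1$ and $1\le i\le m-1$, giving $(m-1)(n-1)=O(mn)$ such entries, while the final row and final column are fixed deterministically by parity (the lines setting $\epsilon_{m,j}$ and $\epsilon_{i,n}$) and cost no additional randomness. For each interior entry the algorithm draws $\epsilon_{i,j}\sim\Bern(q_j/(1+q_j))$ and accepts it with probability $F(\cdots,\epsilon_{i,j})/\max_{\ell}F(\cdots,\ell)$. Because we normalise by the maximum over the two states $\ell\in\{0,1\}$, the maximising state is accepted with probability $1$; by the computation in the proof of Lemma~\ref{runtime:lemma}, that state is produced by the surrogate Bernoulli with probability at least $1/(m+2)$, so the per-entry number of trials is stochastically dominated by a geometric variable of mean at most $m+2=O(m)$. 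Each trial consumes $O(1)$ random bits on average---one $\Bern(q_j/(1+q_j))$ draw and one lazy comparison of an independent uniform against a fixed threshold, both realisable with an expected constant number of bits via the Knuth--Yao scheme~\cite{KnuthYao}. Multiplying $O(\log M)\cdot O(mn)\cdot O(m)\cdot O(1)$ gives the claimed $O(m^2 n\,\log M)$ random bits.

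For the arithmetic, the dominant cost is computing $F$ at each interior entry. As noted after the definition of $F$, its first factor is the probability mass at a single point of a sum of $O(n)$ independent integer random variables, each supported on $O(M)$ values; convolving these distributions (truncated at $M$, since the target value $r_i-k\le M$) costs $O(nM\log M)$ via the fast Fourier transform, and the second factor is a single negative-binomial convolution of no greater cost. Crucially, for a fixed entry $(i,j)$ the quantities $r_i,c_j$ and the previously sampled bits are frozen throughout its rejection loop, and $F$ need only be evaluated at the two arguments $k\in\{0,1\}$; hence $F$ is computed once per entry rather than once per trial, and the loop merely reuses the two precomputed values. Thus each pass costs $O(mn)\cdot O(nM\log M)=O(mn^2 M\log M)$, and over $O(\log M)$ passes we obtain $O(mn^2 M\log^2 M)$ arithmetic operations; the per-pass sorting of $r$ and $c$ ($O(m\log m + n\log n)$) and the parity bookkeeping ($O(m+n)$) are lower order.

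The step requiring the most care is the bound of $O(m)$ expected trials per entry, and specifically checking that the argument of Lemma~\ref{runtime:lemma} still applies when the rejection target is the approximate function $F$ of Equation~\eqref{fij} rather than the exact conditional $\L(\epsilon_{i,j}\mid E)$. This is permissible because the proof of Lemma~\ref{runtime:lemma} uses only a property of the \emph{surrogate} distribution---that both outcomes of $\Bern(q_j/(1+q_j))$ have probability at least $1/(m+2)$---together with the normalisation by the maximum, and is therefore insensitive to which of the two states $F$ declares more likely. The remaining mild point is that the $O(1)$-bits-per-trial claim must be read as an averaged (Knuth--Yao) count, not a worst-case one.
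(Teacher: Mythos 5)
Your proof is correct and follows the same route as the paper's: it invokes Lemma~\ref{runtime:lemma} for the $O(m)$ expected rejections per entry, counts $O(mn\log M)$ entries across the $O(\log M)$ passes, and charges $O(nM\log M)$ per evaluation of $F$ via FFT convolution. You supply details the paper leaves implicit (that $F$ is evaluated once per entry rather than once per trial, which is what makes the paper's count of $O(mn\log M)$ evaluations consistent, and the Knuth--Yao accounting of $O(1)$ bits per trial), but the decomposition and the key lemma are identical.
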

\begin{proof}
By Lemma~\ref{runtime:lemma}, each entry is rejected at most an expected $O(m)$ number of times. 
The rejection function needs to be computed at worst $O(m\,n\,\log M)$ times, with a cost of $O(n\, M\, \log M)$ for performing an $n$-fold convolution via fast Fourier transforms. 
\end{proof}

\subsection{$2\times n$ tables}\label{main:sect:two}

For a more presently practical \emph{exact} sampling algorithm, we consider the application of PDC to the case of $2 \times n$ tables. 
Like rejection sampling, PDC is adaptable, with almost limitless possibilities for its application, and in this particular case there is a simple division which yields a simple and practical algorithm. 

When there are only two rows, the distribution of $\xi_{1,j}$ given $\xi_{1,j}+\xi_{2,j}=c_j$ is uniform on $\{0,\ldots,c_j\}$, so we avoid the geometric distribution altogether. 
This yields the following simple algorithm, which does not require the computation of any rejection functions nor a lookup table.

\begin{algorithm}[H]
\caption{generating a uniformly random $2 \times n$ $(r,c)$-contingency table.}
\label{PDC DSH 2 by n}
\begin{algorithmic}[1]
\FOR {$j=1,\ldots,n-1$}
\STATE choose $x_{1,j}$ uniformly from $\{0,\ldots,c_j\}$
\STATE let $x_{2,j} = c_j - x_{1,j}$
\ENDFOR
\STATE let $x_{1,n} = r_1 - \sum_{j=1}^{n-1} x_{1,j}$\\[.03cm] 
\STATE let $x_{2,n} = r_2 - \sum_{j=1}^{n-1} x_{2,j}$
\IF {$x_{1,n}<0$ or $x_{2,n}<0$}
  \STATE restart from Line 1
\ENDIF
\RETURN $x$
\end{algorithmic}
\end{algorithm}

To summarize Algorithm~\ref{PDC DSH 2 by n}: sample entries in the top row one at a time, \emph{except $\xi_{1,n}$}, uniformly between 0 and the corresponding column sum $c_j$. The rest of the table is then determined by these entries and the prescribed sums; as long as all entries produced in this way are non-negative, we accept the result to produce a uniformly random $(r,c)$-contingency table.

\begin{theorem}\label{two:row:theorem}
Let $U_1, U_2, \ldots, U_{n-1}$ denote independent uniform random variables, with $U_j$ uniform over the set of integers $\{0, 1, \ldots, c_j\}$, $j=1,\ldots, n-1$, and define
\[ p_n := \P\big(U_1 + \ldots + U_{n-1} \in [r_1-c_n, r_1]\big). \]
Algorithm~\ref{PDC DSH 2 by n} produces a uniformly random $2\times n$ $(r,c)$-contingency table, with expected number of rejections $O(1/p_n)$. 
\end{theorem}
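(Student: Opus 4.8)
The plan is to prove the two assertions of the theorem separately---that the output is uniform over all $2\times n$ $(r,c)$-contingency tables, and that the expected number of rejections is $O(1/p_n)$---by recognizing Algorithm~\ref{PDC DSH 2 by n} as rejection sampling of the uniform distribution on a suitable finite set. First I would set up the parametrization: since there are only two rows, every such table is determined by its top row $(x_{1,1},\ldots,x_{1,n})$ via $x_{2,j}=c_j-x_{1,j}$, and this matrix is a valid table exactly when $0\le x_{1,j}\le c_j$ for all $j$ and $\sum_{j=1}^n x_{1,j}=r_1$; the remaining condition $\sum_{j=1}^n x_{2,j}=r_2$ is then automatic from the consistency relation $r_1+r_2=\sum_{j=1}^n c_j$ (necessary for any table to exist). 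Thus tables are in bijection with
\[ T := \Bigl\{(x_1,\ldots,x_n) : 0\le x_j\le c_j \text{ for all } j,\ \textstyle\sum_{j=1}^n x_j = r_1\Bigr\}. \]

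Next I would read off what the algorithm samples. It draws $(U_1,\ldots,U_{n-1})$ independently and uniformly from the box $\prod_{j=1}^{n-1}\{0,\ldots,c_j\}$, then forces $x_{1,n}=r_1-\sum_{j=1}^{n-1}U_j$ and $x_{2,n}=c_n-x_{1,n}$. The acceptance test ``$x_{1,n}\ge0$ and $x_{2,n}\ge0$'' is exactly $0\le x_{1,n}\le c_n$, i.e. $\sum_{j=1}^{n-1}U_j\in[r_1-c_n,r_1]$, so the accepted configurations are precisely the points of
\[ S := \Bigl\{(u_1,\ldots,u_{n-1}) : 0\le u_j\le c_j \text{ for all } j,\ \textstyle\sum_{j=1}^{n-1}u_j\in[r_1-c_n,r_1]\Bigr\}. \]
Dropping the last coordinate is a bijection $T\to S$: it is injective because the last coordinate is forced by the row-sum constraint, and surjective because for $u\in S$ the forced value $r_1-\sum_{j=1}^{n-1}u_j$ lies in $[0,c_n]$ by the band condition defining $S$.

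For uniformity I would note that before the acceptance test each point of the box carries the same mass $\prod_{j=1}^{n-1}(c_j+1)^{-1}$, so conditioning on $\{(U_1,\ldots,U_{n-1})\in S\}$ gives the uniform law on $S$; pushing this forward through the bijection $S\to T$ produces the uniform law on tables. This is exactly the PDC decomposition of case~(1) with $B=(\xi_{1,1},\ldots,\xi_{1,n-1})$, $A=\xi_{1,n}$, and $E=\{\sum_j\xi_{1,j}=r_1\}$: the uniform draw in each column realizes the conditional law $\L(\xi_{1,j}\mid\xi_{1,j}+\xi_{2,j}=c_j)$ observed just before the algorithm, while the deterministic completion of the $n$-th column carries out the (degenerate) second step of PDC.

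Finally, for the runtime, each trial is independent and lands in $S$ with probability $\P\bigl((U_1,\ldots,U_{n-1})\in S\bigr)$, which by the equivalence above and the definition of $p_n$ equals $p_n$ exactly; the number of trials until the first acceptance is therefore geometric with mean $1/p_n$, giving expected rejections $1/p_n-1=O(1/p_n)$. I do not anticipate a serious obstacle, as the argument is essentially bookkeeping; the only points demanding care are checking that the two nonnegativity tests collapse to the single band condition $\sum_{j=1}^{n-1}U_j\in[r_1-c_n,r_1]$ (which silently invokes the consistency relation $r_1+r_2=\sum_j c_j$), and verifying that dropping the last coordinate is a genuine bijection $T\to S$, so that the uniform-on-box prior descends to the true uniform measure on tables rather than to a reweighting of it.
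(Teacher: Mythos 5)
Your proposal is correct and follows essentially the same route as the paper's proof: parametrize each table by its top row minus the last entry, observe that every such table is hit with the same probability $\prod_{j=1}^{n-1}(c_j+1)^{-1}$, and note that the acceptance event is exactly $\sum_{j=1}^{n-1}U_j\in[r_1-c_n,r_1]$, so the trial count is geometric with mean $1/p_n$. Your write-up is in fact more careful than the paper's (you verify explicitly that the two nonnegativity tests collapse to the single band condition via $r_1+r_2=\sum_j c_j$, and that dropping the last coordinate is a genuine bijection), but the underlying argument is identical.
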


\begin{corollary}\label{equal density 2 by n}
When the row sums are equal and the column sums are equal, the expected number of rejections before Algorithm~\ref{PDC DSH 2 by n} terminates is $O(n^{1/2})$.  
\end{corollary}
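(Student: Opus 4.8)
The plan is to feed Theorem~\ref{two:row:theorem} into a local limit estimate for the sum $S := U_1 + \cdots + U_{n-1}$. That theorem already bounds the expected number of rejections by $O(1/p_n)$, where $p_n = \P\big(S \in [r_1-c_n,\,r_1]\big)$, so it suffices to prove $p_n = \Omega(n^{-1/2})$, i.e. $1/p_n = O(n^{1/2})$. Write $c$ for the common column sum, so each $U_j$ is uniform on $\{0,1,\ldots,c\}$ and $c_n=c$. The first thing I would record is a geometric coincidence that drives everything: equal column sums make the grand total $nc$, and equal row sums then force $r_1=r_2=nc/2$, so the target window $[r_1-c,\,r_1]=[nc/2-c,\;nc/2]$ consists of exactly $c+1$ consecutive integers whose midpoint $r_1-c/2=(n-1)c/2$ equals the mean $\mu_S:=\E S=(n-1)c/2$ of $S$. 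Thus we must hit a window centered exactly on the mean of $S$.

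Next I would compute moments: $\Var U_j = c(c+2)/12$, so $\sigma_S^2:=\Var S=(n-1)c(c+2)/12$ and hence $\sigma_S=\Theta(c\sqrt n)$. The window has width $c=o(\sigma_S)$ and $c+1$ lattice points, so the heuristic is that each point near the mean carries mass $\asymp 1/\sigma_S$ and the window collects
\[ p_n \;\asymp\; \frac{c+1}{\sigma_S} \;\asymp\; \frac{c}{c\sqrt n} \;=\; \Theta(n^{-1/2}). \]

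To make this rigorous I would invoke the local central limit theorem. The $U_j$ are i.i.d.\ and supported on the integer lattice with minimal span $1$ (the law is aperiodic for every $c\ge 1$), so Gnedenko's LCLT gives, uniformly in $k$,
\[ \P(S=k) \;=\; \frac{1}{\sigma_S\sqrt{2\pi}}\exp\!\Big(-\tfrac{(k-\mu_S)^2}{2\sigma_S^2}\Big)+o\!\big(\sigma_S^{-1}\big). \]
Each of the $c+1$ integers in the window lies within $c/2=o(\sigma_S)$ of $\mu_S$, so the Gaussian factor is $1+o(1)$ for each; summing yields $p_n=(c+1)\,(1+o(1))/(\sigma_S\sqrt{2\pi})=\Theta(n^{-1/2})$, and therefore $1/p_n=O(n^{1/2})$. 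When $c$ is a fixed constant one does not even need the full window: the single integer nearest $\mu_S$ already lies in $[r_1-c,r_1]$ and carries mass $\Theta(1/\sigma_S)=\Theta(n^{-1/2})$ by the LCLT, which closes the proof immediately.

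The one point I expect to require care is the uniformity of the limit estimate, specifically whether the conclusion is meant to hold uniformly as the common column sum $c=c(n)$ is allowed to grow with $n$ rather than being held fixed; for a single fixed lattice law the Gnedenko statement above is classical, but for $c\to\infty$ the $o(\sigma_S^{-1})$ error must be controlled uniformly (e.g.\ via an Esseen/characteristic-function bound for sums of bounded i.i.d.\ summands). If one prefers to sidestep limit theorems entirely, I would instead use that the pmf of $S$ is log-concave (a convolution of uniform, hence log-concave, sequences) and symmetric about $\mu_S$: among all blocks of $c+1$ consecutive integers the central one carries the most mass, while Chebyshev gives $\P(|S-\mu_S|<2\sigma_S)\ge 3/4$ spread over $O(\sigma_S/c)=O(\sqrt n)$ such blocks, so a pigeonhole argument forces the central block to have mass $\Omega(n^{-1/2})$. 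This log-concavity route is fully elementary and works for every $c\ge 1$, so I would present it as the robust version and treat the uniform LCLT as the quick route in the fixed-$c$ regime.
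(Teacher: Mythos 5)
Your argument is correct, and it is somewhat more careful than what the paper actually does. The paper's own proof is a one-line computation that feeds into the preceding corollary (the integrated CLT statement with $\Phi$): with equal row and column sums one gets $t=0$ and $c_n' = c/\sqrt{c^2(n-1)/12} = \Theta(1/\sqrt{n})$, whence $p_n = \Theta(\Phi(c_n')-\Phi(0)) = \Theta(n^{-1/2})$. Your route replaces that integrated normal approximation with a local limit estimate (window of $c+1$ lattice points centered exactly at $\mu_S$, each carrying mass $\asymp 1/\sigma_S$ with $\sigma_S=\Theta(c\sqrt{n})$), which reaches the same $\Theta(n^{-1/2})$ conclusion. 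Two things your write-up buys that the paper's does not: first, you explicitly establish the \emph{lower} bound $p_n=\Omega(n^{-1/2})$, which is the direction actually needed to bound $1/p_n$ from above (the paper writes $p_n=O(1/\sqrt{n})$, which taken literally points the wrong way and is clearly meant as $\Theta$); second, your log-concavity/pigeonhole fallback gives a nonasymptotic bound valid for every $c\ge 1$, sidestepping the uniformity issue in the $o(\sigma_S^{-1})$ error term when $c=c(n)\to\infty$ --- an issue the paper's CLT-based corollary (stated only under the hypothesis that the CLT holds, with a fixed limit $t$) quietly glosses over. So: same underlying Gaussian-window heuristic, but your execution is more robust.
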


There is a key observation at this point, which is that exponential random variables share the same property that $\xi_{1,j}$ given $\xi_{1,j}+\xi_{2,j}=c_j$ is uniform over the \emph{interval} $[0,c_j]$. 
The algorithm for real--valued $2 \times n$ tables is presented below in Algorithm~\ref{PDC DSH 2 by n real}, and is essentially the same as Algorithm~\ref{PDC DSH 2 by n}.

\begin{algorithm}[H]
\caption{generating a uniformly random $2 \times n$ real--valued $(r,c)$-contingency table.}
\label{PDC DSH 2 by n real}
\begin{algorithmic}[1]
\FOR {$j=1,\ldots,n-1$}
\STATE choose $x_{1,j}$ uniformly from $[0,c_j]$
\STATE let $x_{2,j} = c_j - x_{1,j}$
\ENDFOR
\STATE let $x_{1,n} = r_1 - \sum_{j=1}^{n-1} x_{1,j}$\\[.03cm] 
\STATE let $x_{2,n} = r_2 - \sum_{j=1}^{n-1} x_{2,j}$
\IF {$x_{1,n}<0$ or $x_{2,n}<0$}
  \STATE restart from Line 1
\ENDIF
\RETURN $x$
\end{algorithmic}
\end{algorithm}

\subsection{Approximate sampling of real--valued tables}
\label{sect:real}

By Lemma \ref{lemma:uniform:real}, a uniformly random $(r,c)$-contingency table with real-valued entries has distribution
\[\L\big(\xi\mid E_{r,c}\big),\]
where $\xi=(\xi_{ij})$, $1 \leq i \leq m, 1 \leq j \leq n,$ is an $m\times n$ matrix of independent exponential random variables with parameter $\lambda_{ij} = -\log(1-p_{ij})$.
An exponential random variable $E(\lambda_{ij})$ can be decomposed into a sum
\[E(\lambda_{ij})=A(\lambda_{ij})+G(p_{ij}),\]
where $A(\lambda)$ is a random variable with density \[f(x) = \lambda e^{-\lambda x} (1-e^{-\lambda})^{-1}, \qquad 0 \leq x \leq 1,\] and $G(p)$ is a geometric random variable with probability of success at each trial given by $p$,  independent of $A(\lambda)$. 

One could attempt to adapt the bit-by-bit rejection of Algorithm~\ref{mainalgorithm} in the continuous setting to fractional parts, however, instead of counting the number of residual tables, one would have to look at the \emph{density} of such tables with respect to Lebesgue measure in $\R^{m\times n}$. 
Instead, we champion an approximate sampling algorithm analogous to Algorithm~\ref{approximate algorithm} which circumvents such calculations.

For each $1\leq i \leq m, 1 \leq j \leq n$, let $Y_{i,j}(\lambda_{i,j})$ denote independent exponential random variables with parameter $\lambda_{i,j}$, and let $Y_{i,j}'(\lambda_{i,j})$ denote a random variable with distribution 
\[\L(Y_{i,j}'(\lambda_{i,j})) = \L\left(Y_{i,j}(\lambda_{i,j}) \middle| \sum_{\ell=1}^m Y_{\ell,j}(\lambda_{i,j}) = c_j\right). \]
In what follows we will take $\lambda_{i,j} = -\log\left(\frac{c_j}{m+c_j}\right)$, i.e., $\lambda_{i,j}$ does not vary with parameter~$i$, and let $q_{i,j} = \frac{c_j}{m+c_j}$.
Let $\lfloor Y_{i,j,s}''\rfloor$ denote a random variable with distribution
\[\L(\lfloor Y_{i,j,s}''\rfloor) = \L\left(\lfloor Y_{i,j}\rfloor \middle| \sum_{i=1}^s \xi_{i,j}(q_{i,j}) + \sum_{i=s+1}^m Y_{i,j}(\lambda_{i,j}) = c_j\right). \]

A suggested rejection function is then
\begin{align*}
& G(i,j,x, r, c) \\
 & := \Pr\left( \sum_{\ell=1}^{j-1} \xi'_{i,\ell}(q_\ell) + 2\lfloor Y''_{i,j,i}(\lambda_{i,j})\rfloor + \sum_{\ell=j+1}^{n} Y'_{i,\ell}(\lambda_\ell) \in d(r_i-x) \right) \\
 & \ \ \ \times \Pr\left(\sum_{\ell=1}^i \lfloor Y_{\ell,j} \rfloor + \sum_{\ell=i+1}^m Y_{\ell,j} \in d(c_j - x) \right), \qquad  \mbox{for } x \in [0,1].\\
 \end{align*}

\ignore{
\begin{equation*}
\Pr\left( \sum_{\ell=1}^i \lfloor Y_{\ell,j} \rfloor + \sum_{\ell=i+1}^m Y_{\ell,j} \in dx \right) =  \Pr\left( \NB(i,q_{i,j}) + \mbox{Gamma}(m-i, \lambda_{i,j}) \in dx\right). 
\end{equation*}
Note that since we are normalizing by the max, we may omit all terms which do not depend on $x$. 

The second rejection is row-wise. 
\begin{align*}
\Pr\left( \{Y_{i,n}'\} \in dx,\ \sum_{j=1}^n \lfloor Y_{i,j}' \rfloor = \lfloor r_i - y_i\rfloor\right) & \\
 = \frac{\sum_{k=1}^{\lfloor c_n-x\rfloor } k^{m-2}}{c^{m-1}} \ \frac{\lambda e^{-\lambda\, x}}{1-e^{-\lambda}}\ \Pr\left( \sum_{j=1}^n \lfloor Y_{i,j}' \rfloor = \lfloor r_i - y_i\rfloor\right).&
\end{align*}
Again, since we are normalizing this quantity, we define the rejection function more simply as 
\begin{equation}\label{row:rejection}\small
 w(\lambda,x,y_i) := \left(\sum_{k=1}^{\lfloor c_n-x\rfloor } k^{m-2}\right)\, e^{-\lambda\, x}\, \Pr\left( \sum_{j=1}^n \lfloor Y_{i,j}' \rfloor = \lfloor r_i - y_i\rfloor\right), \qquad x \in [0,1].
 \end{equation}
Note that the final term in the rejection function above is given as a probability, which can be computed using convolutions or a fast Fourier transform, since the random variables $Y_{i,1}', \ldots, Y_{i,n}'$ are independent, and the expression is only needed to a variable amount of precision.
See Section~\ref{sect:cost} for the explicit form of the distribution of $\lfloor Y_{i,j}'\rfloor $.

Once both the column rejections and row rejections have been addressed, there is a final rejection, which is proportional to the probability that the remaining integer-valued table was generated using the remaining (integer-valued) row sums and column sums, see Lemma~\ref{lemma:uniform}: 
\begin{equation}
\P( \mbox{generate table in $\Sigma(r,c)$}) = \Sigma(r,c) \prod_j \left(\frac{c_j}{m+c_j}\right)^{c_j}\, \prod_{j} \left(\frac{m}{m+c_j}\right)^m. 
\end{equation}
Hence, this last rejection requires the ability to calculate the leading $r$ bits of the number of such tables in time $o(2^r)$, as well as the maximum over all possible tables obtainable by filling in the fractional parts of the table, or at least a good upper bound to this maximum. 
More simply, we define the rejection function
\[ t(r,c) := \Sigma(r,c) \prod_j \left(\frac{c_j}{m+c_j}\right)^{c_j}\, \prod_{j} \left(\frac{1}{m+c_j}\right)^m.
\]

A rejection function is given by 
\begin{align*}
& G(i,j,x,r,c) \\
 & := \Pr\left( \sum_{\ell=1}^{j-1} 2\lfloor Y'_{i,\ell}(q_\ell^2, c_j, \epsilon)\rfloor + 2\eta''_{i,j,i} + \sum_{\ell=j+1}^{n} \xi'_{i,\ell}(q_\ell) = r_i - k\right) \\
 & \ \ \ \times \Pr\left( \sum_{\ell=1}^i 2\xi_{\ell,j}(q_j^2) + \sum_{\ell=i+1}^{m} \xi_{\ell,j}(q_j) = c_j - k\right), \\
 \end{align*}
 for $k \in \{0,1\}$.
}

In Algorithm~\ref{alternative fractional real} below, we let FractionExp$(\lambda)$ denote the distribution of the fractional part of an exponential random variable with parameter $\lambda$.

\begin{algorithm}[H]
\caption{Generation of fractional parts of uniformly random $(r,c)$-real-valued table}
\label{alternative fractional real}

\begin{algorithmic}[1]

   \STATE Let $\sigma_R$ denote any permutation such that $\sigma_R \circ r$ is in increasing order.
   \STATE Let $\sigma_C$ denote any permutation such that $\sigma_C \circ c$ is in increasing order.

   \STATE $r \leftarrow \sigma_R \circ r$.   
   \STATE $c \leftarrow \sigma_C \circ c$.

  \FOR {$j=1,\ldots,n-1$}\label{column:loop}

      \FOR {$i=1,\ldots,m-1$}\label{row:loop}
      \STATE $\lambda_j \leftarrow -\log\left(\frac{c_j}{m+c_j}\right)$.

          \STATE $\epsilon_{i,j} \leftarrow$ FractionExp$(\lambda_j)$\label{line:start-of-loop:fractional}

  \IF{$U > \frac{G(i,j,\epsilon_{i,j},r,c)}{\max_{x\in[0,1]} G(i,j,x,r,c)}$} \label{reject:fractional}
     \STATE \textbf{goto} Line~\ref{line:start-of-loop:fractional}.
     \ENDIF 
  \STATE $r_i \leftarrow r_i - \epsilon_{i,j}$.      
  \STATE $c_j \leftarrow c_j - \epsilon_{i,j}$.      
  \ENDFOR \label{column:loop:end}
  
  \STATE $\epsilon_{m,j} \leftarrow  \{c_j\}$ \label{line:last:column}
  \STATE $c_j \leftarrow c_j - \epsilon_{m,j}$.
  
  \STATE $r_m \leftarrow r_m - \epsilon_{m,j}$.

  \ENDFOR \label{row:loop:end}
     \FOR{$i=1,\ldots,m$} \label{line:end:of:third}
         \STATE $\epsilon_{i,n} \leftarrow \{r_i\} $ \label{line:last:column}
         \STATE $r_i \leftarrow r_i - \epsilon_{i,n}$
         
         \STATE $c_n \leftarrow c_n - \epsilon_{i,n}$
      \ENDFOR\label{line:last:three}

	\STATE $\epsilon \leftarrow \sigma_R^{-1}\circ {\bf \epsilon}$  (Apply permutation to rows)
	\STATE $\epsilon \leftarrow \sigma_C^{-1}\circ {\bf \epsilon}$  (Apply permutation to columns)

	\STATE \textbf{return} $\epsilon$. 

\end{algorithmic}
\end{algorithm}

\begin{algorithm}[H]
\caption{Generation of approximately uniform random real-valued $(r,c)$-table}
\label{mainalgorithm real}
\begin{algorithmic}[1]
  \STATE $\epsilon \leftarrow$ output of Algorithm~\ref{alternative fractional real} with $(r,c)$ input.
  \STATE $r'_i \leftarrow r_i - \sum_{j=1}^n \epsilon_{i,j}$, $i=1,\ldots,m$.
  \STATE $c'_j \leftarrow c_j - \sum_{i=1}^m \epsilon_{i,j}$, $j=1,\ldots,n$.
  \STATE $t \leftarrow$ output of Algorithm~\ref{approximate algorithm} with $(r',c')$ input.
\STATE Return  $t+\epsilon$.\label{real:line:combine}
\end{algorithmic}
\end{algorithm}


\section{Background}\label{sect:method}
\subsection{Contingency Tables}

\begin{definition}
Let $r=(r_1,\ldots,r_m)$ and $c=(c_1,\ldots,c_n)$ be vectors of non-negative integers, with $r_1+\cdots+r_m=c_1+\cdots+c_n$. 
An \emph{$(r,c)$-contingency table} is an $m\times n$ matrix $\xi=(\xi_{ij})_{1\leq i \leq m,1 \leq j \leq n}$ with non-negative integer entries, whose row sums $r_i=\sum_j\xi_{ij}$ and column sums $c_j=\sum_i\xi_{ij}$ are prescribed by $r$ and $c$. 
Let $N=\sum_ir_i=\sum_jc_j$ be the sum of all entries. 
We denote the set of all $(r,c)$-contingency tables by the set
\[ E \equiv E_{r,c} = \left\{ \{\xi_{ij}\}_{1\leq i \leq m, 1 \leq j \leq n} \in \N_0^{m\times n}: \ \begin{array}{l} \sum_{\ell=1}^n \xi_{i,\ell} = r_i ~\forall ~ 1\leq i\leq m, \\ \\  \sum_{\ell=1}^m \xi_{\ell,j} = c_j ~\forall ~1 \leq j \leq n \end{array}\right\}. \]
\end{definition}


The set $E$ is also known as the transportation polytope, see for example \cite{barvinok2009asymptotic}, and the measure of interest is typically the uniform measure over the set of \emph{integral} points.   

We shall also consider real--valued tables with real--valued row sums and column sums, in which case the set $E$ is described by 
\[ E_{r,c} = \left\{ \{\xi_{ij}\}_{1\leq i \leq m, 1 \leq j \leq n} \in \R_{\geq 0}^{m\times n}: \ \begin{array}{l} \sum_{\ell=1}^n \xi_{i,\ell} = r_i ~\forall ~ 1\leq i\leq m, \\ \\  \sum_{\ell=1}^m \xi_{\ell,j} = c_j ~\forall ~1 \leq j \leq n \end{array}\right\}. \]
Since this set has infinite cardinality, we instead consider the density of the set with respect to Lebesgue measure on $\R^{m\times n}.$ 

\subsection{Rejection Sampling}

A random contingency table can be described as the joint distribution of a collection of \emph{independent} random variables which satisfy a condition.
Many combinatorial structures follow a similar paradigm, see for example \cite{duchon2004boltzmann, IPARCS} and the references therein.  
Let $\X = (X_{i,j})_{1 \leq i \leq m, 1 \leq j \leq n}$ denote a collection of independent random variables, forming the entries in the table.
Given a collection of row sums $r = (r_1, \ldots, r_m)$ and column sums $c = (c_1, \ldots, c_n)$, the random variable $\X',$ with distribution
\begin{equation}\label{main}
\L(\X') := \L(\, (X_{1,1}, \ldots, X_{m,n})\, |\, E ),
\end{equation}
 is then representative of some measure over the set $E$.
A general framework for the random sampling of joint distributions 
of this form is contained in \cite{PDCDSH}. 

When the set $E$ has positive measure, the simplest approach to random sampling of points from the distribution~\eqref{main} is to sample from $\L(\X)$ repeatedly until $\X \in E$; this is a special case of \emph{rejection sampling} \cite{Rejection}, see also \cite{devroye}, which we refer to as \emph{hard rejection sampling}.
The number of times we must repeat the sampling of $\L(\X)$ is geometrically distributed with expected value $\P(\X\in E)^{-1}$, which may be prohibitively large.  

Beyond hard rejection sampling, one must typically exploit some special structure in $\L(\X)$ or $E$ in order to improve upon the number of repetitions.
Indeed, for contingency tables, we can easily improve upon the na\"{\i}ve hard rejection sampling of the entire table by applying hard rejection sampling to each column independently, with total expected number of repetitions $\sum_j (\P( \sum_{i=1}^m X_{i,j} = c_j))^{-1}$; then, after each column has been accepted, we accept the entire set of columns if every row condition is satisfied. 
This is the approach championed in Section~\ref{sect:other} for more general distributions on the entries. 

Finally, we note that hard rejection sampling fails when $\P(\X\in E) = 0$; in this case, the target set of interest typically lies on a lower--dimensional subspace of the sample space, and it is not apparent how one could generally adapt hard rejection sampling.
In terms of contingency tables, we may wish to sample random real--valued points which satisfy the conditions; if the sums of random variables have densities, the conditioning is well--defined, even though the probability of generating a point inside the table is 0; see \cite{PDCDSH}.

\ignore{
Algorithm~\ref{mainalgorithm fractional real} is an application of what we call \emph{soft} rejection sampling combined with recursive PDC at the end of each iteration, as described in \cite{PDC}.  
Essentially, instead of sampling from the entire set of random variables until a condition is satisfied, one partitions the sample space into two pieces, samples one piece in proportion to its prevalence in the target set, and, conditional on this first piece, the second piece is a smaller version of the original problem. 
This approach was used in~\cite{PDC} to obtain an asymptotically fastest algorithm for uniform sampling of integer partitions, by sampling the smallest bits, next smallest bits, etc., of the multiplicities of the part sizes. 
}

\subsection{Uniform Sampling}

We now summarize some properties that we utilize to prove our main results.

\begin{lemma}
\label{lemma:uniform}
Suppose $\X=(X_{ij})_{1 \leq i \leq m, 1 \leq j \leq n}$ are independent geometric random variables with parameters $p_{ij}$.  
If $p_{ij}$ has the form $p_{ij} = 1 - \alpha_i \beta_j$, then $\X$ is uniform restricted to $(r,c)$-contingency tables.
\end{lemma}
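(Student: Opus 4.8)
The plan is to compute the joint probability mass function of $\X$ explicitly and show that, conditioned on the event $E$ (that the row sums equal $r$ and column sums equal $c$), this mass function is constant across all $(r,c)$-contingency tables. Since the entries $X_{ij}$ are independent geometric random variables with $\Pr(X_{ij} = x_{ij}) = (1-p_{ij})\, p_{ij}^{x_{ij}}$, the probability of observing any particular table $\{x_{ij}\}$ factors as a product:
\[
\Pr(\X = \{x_{ij}\}) = \prod_{i,j} (1-p_{ij})\, p_{ij}^{x_{ij}}.
\]

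First I would substitute the assumed form $p_{ij} = 1 - \alpha_i\beta_j$ into the exponential factor and separate the normalizing constant from the table-dependent part. The key is the term $\prod_{i,j} p_{ij}^{x_{ij}} = \prod_{i,j}(\alpha_i\beta_j)^{x_{ij}}$, since $p_{ij} = \alpha_i\beta_j$ after rewriting $1 - p_{ij}$; I should be careful about which quantity plays the role of the geometric parameter, but the essential point is that the table-dependent factor becomes a product of powers of $\alpha_i$ and $\beta_j$. Using the identity $\prod_{i,j}\alpha_i^{x_{ij}} = \prod_i \alpha_i^{\sum_j x_{ij}} = \prod_i \alpha_i^{r_i}$, and similarly $\prod_{i,j}\beta_j^{x_{ij}} = \prod_j \beta_j^{c_j}$, the table-dependent part collapses to $\prod_i \alpha_i^{r_i} \prod_j \beta_j^{c_j}$, which depends only on the fixed margins $r$ and $c$, not on the individual entries $x_{ij}$.

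Therefore, for every table in $E$, the unconditional probability $\Pr(\X = \{x_{ij}\})$ equals the same constant, namely $\bigl(\prod_{i,j}(1-p_{ij})\bigr)\prod_i\alpha_i^{r_i}\prod_j\beta_j^{c_j}$. Conditioning on $E$ divides each such probability by $\Pr(\X \in E)$, which is itself a constant, so $\L(\X \mid E)$ assigns equal probability to each $(r,c)$-contingency table. This is exactly the statement that $\X$ conditioned on $E$ is uniform over $(r,c)$-contingency tables.

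I do not expect a serious obstacle here; the proof is essentially the standard observation that a product-form (exponential family) measure with rank-one interaction parameters has the margins $(r,c)$ as sufficient statistics, forcing the conditional law to be uniform. The only point requiring care is bookkeeping the geometric parametrization correctly—matching $1-p_{ij}$ versus $p_{ij}$ to $\alpha_i\beta_j$—and confirming that the crucial exponents $\sum_j x_{ij}$ and $\sum_i x_{ij}$ are pinned to $r_i$ and $c_j$ precisely because we have conditioned on $E$. Everything else is a routine rearrangement of products.
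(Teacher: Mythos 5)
Your proposal is correct and follows essentially the same route as the paper: write the joint mass function as a product, observe that the table-dependent factor $\prod_{i,j}(\alpha_i\beta_j)^{x_{ij}}$ collapses to $\prod_i\alpha_i^{r_i}\prod_j\beta_j^{c_j}$ via the margin constraints, and conclude uniformity since the remaining factors are constants. The only wrinkle is the parametrization you flagged yourself --- with $p_{ij}=1-\alpha_i\beta_j$ the paper takes $p_{ij}$ as the success probability, so it is $(1-p_{ij})^{x_{ij}}=(\alpha_i\beta_j)^{x_{ij}}$ that carries the exponent and $\prod_{i,j}p_{ij}$ that is the normalizing constant --- but this does not affect the argument.
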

\begin{proof}
For any $(r,c)$-contingency table $\xi$,
\[\P\big(\X=\xi\big)
=\prod_{i,j}\P\big(X_{ij}=\xi_{ij}\big)
=\prod_{i,j}(\alpha_i \beta_j)^{\xi_{ij}}(1-\alpha_i \beta_j)
=\prod_i\alpha_i^{r_i} \prod_j\beta_j^{c_j}\prod_{i,j}\big(1-\alpha_i \beta_j\big).
\]
Since this probability does not depend on $\xi$, it follows that the restriction of $X$ to $(r,c)$-contingency tables is uniform.
\end{proof}

For $j=1,\ldots,n$, let $C_j=(C_{1j},\ldots,C_{mj})$ be independent random vectors with distribution given by $(X_{1j},\ldots,X_{mj})$ conditional on $\sum_iX_{ij}=c_j$; that is,
\[\P\big(C_j=(\xi_{1j},\ldots,\xi_{mj})\big)=\frac{\P\big(X_{1j}=\xi_{1j},\ldots,X_{mj}=\xi_{mj}\big)}{\P\big(\sum_iX_{ij}=c_j\big)}\]
for all non-negative integer vectors $\xi_j$ with $\sum_i\xi_{ij}=c_j,$ and 0 otherwise.

\begin{lemma}
The conditional distribution of $C=(C_1,\ldots,C_n)$ given $\sum_jC_{ij}=r_i$ for all $i$ is that of a uniformly random $(r,c)$-contingency table.
\label{lem:luckyuniformity}
\end{lemma}
\begin{proof}
For any $(r,c)$-contingency table $\xi$, $\P(C=\xi)$ is a constant multiple of $\P(x=\xi)$.
\end{proof}

Our next lemma shows how to select the parameters $p_{ij}$ in order to optimize the probability of generating a point inside of $E$. 
It also demonstrates a key difficulty in the sampling of points inside $E$, which is that we can only tune the probabilities $p_{i,j}$ to optimally target rows sums or column sums, but not both simultaneously. 

\begin{lemma}\label{lemma:q}
 Suppose $\X$ is a table of independent geometric random variables, where $X_{i,j}$ has parameter $p_{ij}=m/(m+c_j)$, $1\leq i \leq m$, $1 \leq j \leq n$.  
Then the expected columns sums of $\X$ are $c$, and the expected row sums of $\X$ are $N/m$. 
\end{lemma}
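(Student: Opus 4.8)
The plan is a one-line mean computation followed by linearity of expectation; the only subtlety is pinning down the parametrization. Following the convention inherited from Lemma~\ref{lemma:uniform}, a geometric random variable with parameter $p$ has mass function $\P(X=k)=(1-p)^k p$ for $k=0,1,2,\ldots$ (this is the convention under which $p_{ij}=1-\alpha_i\beta_j$ yields the mass function $(\alpha_i\beta_j)^{\xi_{ij}}(1-\alpha_i\beta_j)$ in that lemma), and hence has mean $(1-p)/p$.

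First I would substitute $p_{ij}=m/(m+c_j)$ to obtain the mean of a single entry:
\[ \E[X_{ij}] = \frac{1-p_{ij}}{p_{ij}} = \frac{c_j/(m+c_j)}{m/(m+c_j)} = \frac{c_j}{m}. \]
The key feature is that this depends on $j$ only, so all $m$ entries in a given column share the common mean $c_j/m$.

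Then linearity of expectation finishes both claims. For the $j$-th column sum, $\E\left[\sum_{i=1}^m X_{ij}\right] = m\cdot(c_j/m) = c_j$, matching the prescribed column sum exactly. For the $i$-th row sum, $\E\left[\sum_{j=1}^n X_{ij}\right] = \sum_{j=1}^n (c_j/m) = N/m$, using $N=\sum_j c_j$ from the definition of a contingency table; note this common value is independent of $i$.

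There is no genuine obstacle here, as the content of the lemma is conceptual rather than computational. The choice $p_{ij}=m/(m+c_j)$ is engineered to calibrate the column sums exactly, and the computation makes transparent why it cannot simultaneously hit the individual row sums $r_i$: since $p_{ij}$ is constant down each column, every row receives the same expected sum $N/m$. The only pitfall to avoid is the success-probability-versus-failure-probability convention, which would otherwise flip the entry mean to $m/c_j$ and invalidate the conclusion.
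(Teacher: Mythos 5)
Your proposal is correct and follows the same route as the paper: the paper likewise computes $\E[x_{ij}]=\frac{m+c_j}{m}-1=\frac{c_j}{m}$ and applies linearity of expectation to the column and row sums. Your explicit remark on the success/failure-probability convention is consistent with the paper's definition of $\Geo(q)$ and with the mass function used in Lemma~\ref{lemma:uniform}, so there is no gap.
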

\begin{proof}
For any $j=1,\ldots,n$,
\[\sum_{i=1}^m\E\big[x_{ij}\big]=\sum_{i=1}^m\bigg(\frac{m+c_j}{m}-1\bigg)=c_j.\]
Similarly, for any $i=1,\ldots,m$,
\[\sum_{j=1}^n\E\big[x_{ij}\big]=\sum_{j=1}^n \bigg(\frac{m+c_j}{m}-1\bigg)=\frac Nm.\qedhere\]
\end{proof}

\begin{remark}\label{conditional:independence}
Note that entries in different rows (columns, resp.) are conditionally independent. This means that we can separately sample all of the rows (columns, resp.) independently until all of the row (column, resp.) conditions are satisfied. 
It also means that once all but one column (row, resp.) are sampled according to the appropriate conditional distribution, the remaining column (row, resp.) is uniquely determined by the constraints. 
\end{remark}

For real--valued tables, the calculations are analogous and straightforward.

\begin{lemma}
\label{lemma:uniform:real}
Suppose $\X=(X_{ij})_{1 \leq i \leq m, 1 \leq j \leq n}$ are independent exponential random variables with parameters $\lambda_{i,j} := -\log(1-p_{ij})$.  
If $p_{ij}$ has the form $p_{ij} = 1 - \alpha_i \beta_j$, then $\X$ is uniform restricted to real--valued $(r,c)$-contingency tables.
\end{lemma}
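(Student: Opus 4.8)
The plan is to mirror the proof of Lemma~\ref{lemma:uniform}, replacing probability mass functions by Lebesgue densities. First I would write the joint density of the independent exponentials: for any nonnegative real matrix $\xi$,
\[ f_{\X}(\xi) = \prod_{i,j} \lambda_{ij}\, e^{-\lambda_{ij}\xi_{ij}} = \Big(\prod_{i,j}\lambda_{ij}\Big)\exp\Big(-\sum_{i,j}\lambda_{ij}\xi_{ij}\Big). \]
Substituting the hypothesis $\lambda_{ij} = -\log(1-p_{ij}) = -\log(\alpha_i\beta_j) = -\log\alpha_i - \log\beta_j$, the exponent separates into a row part and a column part:
\[ -\sum_{i,j}\lambda_{ij}\xi_{ij} = \sum_i \log\alpha_i \sum_j \xi_{ij} + \sum_j \log\beta_j \sum_i \xi_{ij} = \sum_i r_i\log\alpha_i + \sum_j c_j\log\beta_j, \]
so that
\[ f_{\X}(\xi) = \Big(\prod_{i,j}\lambda_{ij}\Big)\prod_i\alpha_i^{r_i}\prod_j\beta_j^{c_j}. \]

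This is the exact continuous analogue of the factorization appearing in Lemma~\ref{lemma:uniform}: the right-hand side depends on $\xi$ only through its prescribed row and column sums, and is therefore constant as $\xi$ ranges over $E_{r,c}$. The conclusion then follows from the definition of the conditional density. Restricted to the affine subspace cut out by the constraints $\sum_j\xi_{ij}=r_i$ and $\sum_i\xi_{ij}=c_j$, the law $\L(\X \mid \X \in E_{r,c})$ has density proportional to $f_{\X}$ with respect to the $(m-1)(n-1)$-dimensional Lebesgue measure carried by that subspace. Since $f_{\X}$ is constant there, the conditional density is constant, i.e.\ uniform.

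The one point requiring genuine care — the only place where the argument differs from the integer case — is that $E_{r,c}$ is a measure-zero, lower-dimensional subset of $\R_{\geq 0}^{m\times n}$, so the naive conditioning is on a null event and $\P(\X \in E_{r,c}) = 0$. I would resolve this exactly as the PDC framework intends in case~(2): each constraint coordinate $\sum_j \xi_{ij}$ and $\sum_i \xi_{ij}$ is a finite sum of independent exponentials, hence absolutely continuous with bounded density, so the disintegration defining $\L(\X \mid \X \in E_{r,c})$ exists and carries the density computed above. I expect this to be the main obstacle, though it is entirely a matter of justifying that the conditional measure is well-defined rather than any new computation; once the disintegration is in hand, constancy of $f_{\X}$ on each fiber yields uniformity immediately.
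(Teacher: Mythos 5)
Your proof is correct and follows essentially the same route as the paper's: compute the joint density $\prod_{i,j}\lambda_{ij}e^{-\lambda_{ij}\xi_{ij}}=\prod_{i,j}\lambda_{ij}\prod_i\alpha_i^{r_i}\prod_j\beta_j^{c_j}$ and observe that it depends on $\xi$ only through the prescribed row and column sums, hence is constant on $E_{r,c}$. Your added remarks justifying the conditioning on the null event via bounded densities of the constraint sums are a careful elaboration of a point the paper treats informally (writing $\P(\X\in d\xi)$ and deferring to the PDC framework of case~(2)), not a different argument.
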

\begin{proof}
For any real--valued $(r,c)$-contingency table $\xi$,
\begin{align*}
\P\big(\X\in d\xi\big) & = \prod_{i,j}\P\big(X_{ij}\in d\xi_{ij}\big)=\prod_{i,j}\lambda_{i,j}\ e^{-\lambda_{i,j} \xi_{i,j}}  \\
 & = \prod_{i,j} \big(\lambda_{i,j}\big) \prod_{i,j} \left(\alpha_i \beta_j\right)^{\xi_{ij}}
=\prod_i\alpha_i^{r_i} \prod_j\beta_j^{c_j}\prod_{i,j}\big(\lambda_{i,j}\big).
\end{align*}
Since this probability does not depend on $\xi$, it follows that the restriction of $X$ to real--valued $(r,c)$-contingency tables is uniform.
\end{proof}

\begin{lemma}\label{lemma:q}
 Suppose $\X$ is a table of independent exponential random variables, where $X_{i,j}$ has parameter $\lambda_{i,j} = -\log\left(1-p_{ij}\right)$, $1\leq i \leq m$, $1 \leq j \leq n$.  
Then the expected columns sums of $\X$ are $c$, and the expected row sums of $\X$ are $N/m$. 
\end{lemma}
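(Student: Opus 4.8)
The plan is to follow the proof of the integer-valued Lemma~\ref{lemma:q} almost verbatim, substituting the exponential mean for the geometric one; the entire argument is just linearity of expectation applied once across each column and once across each row. The single analytic input is that an exponential random variable of rate $\lambda$ has expectation $1/\lambda$, so I would begin by recording $\E[X_{i,j}] = 1/\lambda_{i,j}$.

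The key step is to pin down the parameter. Since $\lambda_{i,j} = -\log(1-p_{i,j})$ depends on $i$ only through $p_{i,j}$, and the relevant $p_{i,j}$ is a function of $c_j$ alone, all $m$ entries of column $j$ share a common mean $1/\lambda_{i,j}$. For the stated column sum to hold exactly I need this common mean to equal $c_j/m$, i.e. $\lambda_{i,j} = m/c_j$, equivalently $p_{i,j} = 1 - e^{-m/c_j}$. This choice is still of the product form $p_{i,j} = 1 - \alpha_i\beta_j$ with $\alpha_i \equiv 1$ and $\beta_j = e^{-m/c_j}$, so Lemma~\ref{lemma:uniform:real} continues to guarantee that the conditioned table is uniform.

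With the mean fixed at $c_j/m$, both assertions are immediate: the expected $j$-th column sum is $\sum_{i=1}^m \E[X_{i,j}] = m\cdot(c_j/m) = c_j$, and the expected $i$-th row sum is $\sum_{j=1}^n \E[X_{i,j}] = \sum_{j=1}^n c_j/m = N/m$ using $N = \sum_j c_j$.

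The main obstacle is conceptual rather than computational: identifying the correct rate. In the geometric case the parameter $p_{i,j} = m/(m+c_j)$ does double duty, simultaneously matching the integer-part distribution and producing mean $(1-p_{i,j})/p_{i,j} = c_j/m$. For the exponential the mean is $1/\lambda$ rather than $(1-p)/p$, so the rate that makes $\lfloor X_{i,j}\rfloor$ geometric with the previous parameter, namely $\lambda_{i,j} = \log((m+c_j)/c_j)$, yields expected column sum $m/\log((m+c_j)/c_j)$, which only approximates $c_j$. The proof must therefore commit to $\lambda_{i,j} = m/c_j$ to keep the two equalities exact, and I would flag this distinction explicitly so that the rate used here is not conflated with the rate $p_{i,j} = m/(m+c_j)$ employed in the sampling algorithms.
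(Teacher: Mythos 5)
Your computation is correct, and it is worth noting that the paper offers no proof of this lemma at all---it is preceded only by the remark that for real--valued tables ``the calculations are analogous and straightforward,'' and the proof environment is simply absent. Your write-up is therefore more careful than the source, and the distinction you flag at the end is a genuine one, not a pedantic aside. With the parameter the paper actually uses elsewhere (Section~\ref{sect:real} takes $\lambda_{i,j} = -\log\bigl(c_j/(m+c_j)\bigr)$, i.e.\ $p_{ij} = m/(m+c_j)$ exactly as in the geometric lemma), each entry has mean $1/\log(1+m/c_j)$ rather than $c_j/m$, so the expected column sum is $m/\log(1+m/c_j) \ne c_j$; the two agree only to first order when $m/c_j$ is small. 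The geometric argument does not transfer verbatim precisely because $\E\bigl[\Exp(-\log(1-p))\bigr] = -1/\log(1-p)$, not $(1-p)/p$.

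Your resolution---taking $\lambda_{i,j} = m/c_j$, equivalently $p_{ij} = 1 - e^{-m/c_j}$, which is still of the product form $1-\alpha_i\beta_j$ with $\alpha_i \equiv 1$ and $\beta_j = e^{-m/c_j}$, so Lemma~\ref{lemma:uniform:real} still applies---is the right way to make both stated equalities exact, and the two linearity-of-expectation computations that follow are immediate and correct. In short: your proof is valid for the corrected parameter choice, and in the process it identifies an imprecision in the lemma as stated (the lemma never pins down $p_{ij}$, and under the paper's own implicit choice the conclusion holds only approximately).
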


\begin{remark}
{\rm 
Remark~\ref{conditional:independence} applies when the $X_{i,j}$'s are independent and have any discrete distribution, not just geometric.
If the $X_{i,j}$'s are independent \emph{continuous} random variables such that each of the sums $\sum_{i=1}^m X_{i,j}$ for $j=1,2,\ldots,n$ and $\sum_{j=1}^n X_{i,j}$ for $i=1,2,\ldots,m$ has a density, then the same conditional independence of entries in different rows are conditionally independent, and we can separately sample all of the rows independently until all of the row conditions are satisfied. 
}\end{remark}

\ignore{ 
\begin{lemma}
\label{lemma:uniform}
Suppose $\X=(X_{ij})_{1 \leq i \leq m, 1 \leq j \leq n}$ are independent exponential random variables with parameters $-\log(p_{ij})$.  
If $p_{ij}$ has the form $p_{ij} = 1 - \alpha_i \beta_j$, then $\X$ is uniform restricted to $(r,c)$-contingency tables.
\end{lemma}
}

\subsection{Probabilistic Divide-and-Conquer}\label{sect:PDC}
\ignore{
Our main tool is probabilistic divide-and-conquer
The essence of \emph{probabilistic divide-and-conquer}  --- PDC  --- is random sampling of conditional distributions, which, \emph{when appropriately pieced together}, represent a sample from the target distribution.    
We assume throughout that the target object $S$ may be expressed as a pair $(A,B)$, where
\begin{equation}\label{def AB}
  A \in \mathcal{A}, \ B \in \mathcal{B} \ \mbox{ have given distributions},
\end{equation}
\begin{equation}\label{indep AB}
  A , B \ \mbox{ are independent},
\end{equation}
\begin{equation}\label{def h}
h: \mathcal{A} \times \mathcal{B} \to \{0,1 \} 
\end{equation}
$$
 \mbox{ satisfies   }  p := \e h(A,B) \in (0,1],\footnote{
 The requirement that $p>0$ is  a choice we make here, for the sake of simpler exposition;  there are $p=0$ examples where divide-and-conquer is useful, see \cite{PDCDSH}.
In cases where $p=0$,
 the conditional distribution, apparently specified by \eqref{def S}, needs further specification --- this is known as
 Borel's paradox.
 }
 $$
where, of course, we also assume that $h$ is measurable, and
\begin{equation}\label{def S}
S \in \mathcal{A} \times \mathcal{B}  \mbox{ has distribution  }  \L(S) = \L(\,  (A,B)\, | \,h(A,B) =1),
\end{equation}
i.e., the law of $S$ is the law of the independent pair $(A,B)$ \emph{conditional on} 
having $h(A,B)=1$.
}

The main utility of PDC is that when the conditioning event has positive probability, it can be applied to any collection of random variables. 
When the conditioning event has probability 0, some care must be taken, although in our current setting we can apply~\cite[Lemma~2.1, Lemma~2.2]{PDCDSH} directly.  

We now recall the main PDC lemma, which is simplest to state in terms of a target event $E$ of positive probability. 
Suppose that $\cX$ and $\cY$ are sets, and let $\cZ$ be a subset of $\cX\times\cY$. Let $A$ and $B$ be probability measures on $\cX$ and $\cY,$ respectively. The goal of PDC is to provide a technique to sample from the distribution of the cartesian product of $A$ and $B$ restricted to $\cZ$.

\begin{theorem}[Probabilistic Divide-and-Conquer \cite{PDC}]
\label{thm:pdc}
For each $a\in\cX$, let $\cY_a=\{b\in\cY:(a,b)\in\cZ\}$, where $\P((A,B) \in E)>0$. 
Let 
\[\L(A') := \L\left(A\,\big|\,  (A,B) \in E\right), \]
\[\L(B'_a) := \L\big(B\,\big|\,\cY_a\big).\]
Then, $\L\big(A', B'_{A'}\big) = \L\big((A,B)\,\big|\,\cZ\big)$.
\end{theorem}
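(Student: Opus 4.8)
The plan is to verify the claimed identity of distributions by testing both sides against an arbitrary bounded measurable $g:\cX\times\cY\to\R$, exploiting the fact that independence makes the joint law of $(A,B)$ the product measure, which I write $A\otimes B$. The target law is the normalized restriction $\L\big((A,B)\mid\cZ\big)(\,\cdot\,)=(A\otimes B)(\,\cdot\cap\cZ)/(A\otimes B)(\cZ)$, which is well defined precisely because $\P((A,B)\in E)>0$.

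First I would record the two ingredient measures explicitly. By Fubini--Tonelli the map $a\mapsto B(\cY_a)$ is measurable and $(A\otimes B)(\cZ)=\int_\cX B(\cY_a)\,A(da)$, so the outer sample has law $\L(A')(da)=B(\cY_a)\,A(da)/(A\otimes B)(\cZ)$; that is, its density against $A$ is proportional to $B(\cY_a)$. The inner sample has law $\L(B'_a)(db)=\mathbbm{1}(b\in\cY_a)\,B(db)/B(\cY_a)$ whenever $B(\cY_a)>0$. The key step is then the cancellation of $B(\cY_a)$: for bounded measurable $g$,
\begin{align*}
\E\big[g(A',B'_{A'})\big]
&=\int_\cX\Big(\int_\cY g(a,b)\,\L(B'_a)(db)\Big)\,\L(A')(da)\\
&=\frac{1}{(A\otimes B)(\cZ)}\int_\cX\frac{1}{B(\cY_a)}\Big(\int_{\cY_a} g(a,b)\,B(db)\Big)\,B(\cY_a)\,A(da)\\
&=\frac{1}{(A\otimes B)(\cZ)}\int_\cX\int_\cY g(a,b)\,\mathbbm{1}\big((a,b)\in\cZ\big)\,B(db)\,A(da),
\end{align*}
where I have used that $(a,b)\in\cZ$ is equivalent to $b\in\cY_a$. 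A final application of Fubini--Tonelli rewrites the last line as $\E\big[g(A,B)\,\mathbbm{1}((A,B)\in\cZ)\big]/\P((A,B)\in\cZ)=\E\big[g(A,B)\mid\cZ\big]$. Since $g$ was arbitrary, the two laws coincide.

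The one point needing care --- and the only genuine obstacle --- is the set $\mathcal{N}=\{a:B(\cY_a)=0\}$, on which the inner conditional $\L(B'_a)$ is undefined (a miniature instance of Borel's paradox). I would dispose of it by observing that $\L(A')(\mathcal{N})=(A\otimes B)(\cZ)^{-1}\int_{\mathcal{N}} B(\cY_a)\,A(da)=0$, so the outer step almost surely avoids $\mathcal{N}$; the inner conditional may therefore be assigned arbitrarily on $\mathcal{N}$ without affecting $\L(A',B'_{A'})$, and the factor $B(\cY_a)$ cancelled above is nonzero $\L(A')$-almost everywhere, legitimizing the manipulation. The remaining ingredients --- measurability of the sections $\cY_a$, measurability of $a\mapsto B(\cY_a)$, and the two interchanges of integration --- are standard consequences of $\cZ$ being measurable together with Fubini--Tonelli, so no difficulty arises beyond this null-set bookkeeping.
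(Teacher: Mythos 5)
Your proof is correct and is essentially the paper's argument (which appears only as a pointwise discrete computation in the cited source): the whole content in both versions is the cancellation of the factor $B(\cY_a)$ between the reweighted outer law $\L(A')(da)\propto B(\cY_a)\,A(da)$ and the normalized inner conditional $\mathbbm{1}(b\in\cY_a)\,B(db)/B(\cY_a)$. Your version merely carries this out against test functions via Fubini and adds the (correct, and worthwhile) observation that the set $\{a: B(\cY_a)=0\}$ is $\L(A')$-null, so the undefined inner conditional there is harmless.
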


A similar theorem holds when $\P((A,B) \in E) = 0$, under some simple conditions. 
\begin{theorem}[Probabilistic Divide-and-Conquer for certain events of probability 0 \cite{PDCDSH}]
\label{thm:pdcdsh}
For each $a\in\cX$, let $\cY_a=\{b\in\cY:(a,b)\in\cZ\}$, where $\P((A,B) \in E)=0$. 
Suppose there is a random variable $T$ with a density, and $k \in \mbox{range}(T)$ such that $ \P((A,B) \in E) = \P(T = k)$.  
Suppose further that for each $a \in \cX$, there is a random variable $T_a$, either discrete or with a density, and $k \in \mbox{range}(T_a)$, such that $\P(\cY_a = b) = \P(T_a = k)$. 
Let 
\[\L(A') := \L\left(A\ |\  (A,B) \in E\right), \]
\[\L(B'_a) := \L\big(B\,\big|\,\cY_a\big).\]
Then, $\L\big(A', B'_{A'}\big)=\L\big((A,B)\,\big|\,\cZ\big)$.
\end{theorem}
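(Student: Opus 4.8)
The plan is to deduce Theorem~\ref{thm:pdcdsh} from the positive-probability version, Theorem~\ref{thm:pdc}, by a thickening-and-limit argument, using the density of $T$ to give meaning to the conditioning on the null event $\{T=k\}$. Concretely, for $\epsilon>0$ I would set
\[ E_\epsilon := \{(a,b)\in\cX\times\cY : k \le T(a,b) < k+\epsilon\}, \]
so that $\P((A,B)\in E_\epsilon)>0$ for all sufficiently small $\epsilon$, since $k\in\range(T)$ and $T$ has a density. By definition the target law $\L\big((A,B)\mid (A,B)\in \cZ\big)$ is the regular conditional distribution $\L\big((A,B)\mid T=k\big)$, which is the weak limit of $\L\big((A,B)\mid E_\epsilon\big)$ as $\epsilon\downarrow 0$; the existence of this limit, and of the analogous limits for the marginals, is precisely what the density hypotheses are there to guarantee.

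For each fixed $\epsilon$, Theorem~\ref{thm:pdc} applies to the positive-probability event $E_\epsilon$: writing $\cY_a^\epsilon = \{b : (a,b)\in E_\epsilon\}$, if I sample $A_\epsilon$ from $\L(A\mid E_\epsilon)$ and then $B_\epsilon$ from $\L(B\mid \cY^\epsilon_{A_\epsilon})$, the pair $(A_\epsilon,B_\epsilon)$ has law $\L\big((A,B)\mid E_\epsilon\big)$. The body of the proof is then to pass to the limit in all three distributions at once. First, $\L(A\mid E_\epsilon)\to\L(A\mid T=k)=\L(A')$, again by the density of $T$. Second, for $\mu_A$-almost every fixed $a$ the fiber event $\cY_a^\epsilon$ shrinks to $\cY_a=\{T_a=k_a\}$, and since $T_a$ is discrete or has a bounded density one gets $\L(B\mid\cY_a^\epsilon)\to\L(B\mid T_a=k_a)=\L(B'_a)$. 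Combining, the limiting pair is distributed as ``$A'$ followed by $B'_{A'}$,'' whereas the left-hand side converges to $\L\big((A,B)\mid T=k\big)=\L\big((A,B)\mid\cZ\big)$, which is the claimed identity $\L\big(A',B'_{A'}\big)=\L\big((A,B)\mid\cZ\big)$.

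The hard part will be the consistency of the two conditionings in the limit, i.e.\ a Borel-paradox issue: the fiber law $\L(B'_a)=\L(B\mid T_a=k_a)$ is specified through $T_a$, whereas the thickening above induces the fiber conditioning through $T$ itself, and regular conditional distributions given a null event can depend on the conditioning variable. One must therefore verify that the two prescriptions yield the same limit, and this is exactly where the hypothesis $\cY_a=\{T_a=k_a\}$ with $T_a$ of bounded density (condition (2)(ii) of the setup) is essential: it pins down the conditional law on the fiber unambiguously and forces agreement with the $T$-induced one. The remaining technical point is to justify interchanging the fiberwise limit with the average over $a$; I would handle this by dominated convergence, with the domination supplied by the boundedness of the densities of $T$ and of the $T_a$. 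Once these two steps are in place the identity follows, recovering \cite[Lemmas~2.1 and~2.2]{PDCDSH}.
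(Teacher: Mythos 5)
You should first note that the paper does not actually prove Theorem~\ref{thm:pdcdsh}: it is imported verbatim from \cite{PDCDSH}, where the argument is a direct computation with no limits. One writes $f_T(k)=\int_{\cX} f_{T_a}(k_a)\,\P(A\in da)$ by Fubini, observes that $\L(A')$ is absolutely continuous with respect to $\L(A)$ with density $f_{T_a}(k_a)/f_T(k)$, and multiplies by the kernel $\L(B\mid T_a=k_a)$ to verify directly the defining property of the regular conditional distribution of $(A,B)$ given $T=k$. Your thickening-and-limit strategy is therefore a genuinely different route, and as written it has two real gaps --- both of which you name, but neither of which you close.

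First, weak convergence of the marginal $\L(A\mid E_\epsilon)\to\L(A')$ together with $\L(A)$-a.e.\ convergence of the fiber kernels $\L(B\mid\cY^\epsilon_a)\to\L(B'_a)$ does not yield weak convergence of the joint law of $(A_\epsilon,B_\epsilon)$ to that of $(A',B'_{A'})$: you are integrating an $\epsilon$-dependent kernel against an $\epsilon$-dependent mixing measure, and this requires either continuity of the limiting kernel in $a$ or a domination that is uniform in $a$; boundedness of each individual density $f_{T_a}$ does not give $\sup_a\sup_x f_{T_a}(x)<\infty$, so the dominated-convergence step you invoke is not supplied by the hypotheses. Second, and more seriously, your fiber sets $\cY^\epsilon_a=\{b: k\le T(a,b)<k+\epsilon\}$ are thickenings \emph{through $T$}, whereas $\L(B'_a)$ is defined by conditioning \emph{through $T_a$}; hypothesis (2)(ii) constrains only the level set $\{T_a=k_a\}=\cY_a$, and two random variables with the same level set can induce genuinely different conditional laws on that set --- this is exactly the Borel paradox. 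So your claim that the hypothesis ``pins down the conditional law unambiguously and forces agreement'' is the very assertion that must be proved, not a consequence of the assumptions; without an explicit compatibility between $T$ and the family $(T_a)_{a\in\cX}$ (in the paper's applications $T(a,b)$ and $T_a(b)$ agree up to the part of $T$ already determined by $a$), the limiting argument cannot be completed. The direct density computation of \cite{PDCDSH} is both shorter and immune to these issues.
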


\ignore{
We begin by defining some notation that will allow us to easily manipulate such products and restrictions. 
Let $(A,B)$ denote a random variable on $\cX\times\cY$ with
\[\P\big((A,B)=(a,b)\big)=\P\big(A=a\big)\P(B=b\big).\]
For a random variable $Z$ on a measurable space $(S,\cF(S))$, and any measurable subset $T\in\cF(S)$, let $(Z\mid T)$ denote the random variable on $(T,\cF(T))$ with
\[\P\big((Z\mid T)=z\big)=\frac{\P(Z=z)}{\P(Z\in T)}.\]
Under this notation, the object we wish to study can be written $((A,B)\mid\cZ)$.
}
Our recommended approach to sample from $\L(A')$ is rejection sampling.  For any measurable function $p:S\rightarrow[0,1]$, let $\L(Z\mid U<p(Z))$ denote the first coordinate of $\L((Z,U)\mid U<p(Z))$, where $U$ is a uniform random variable on $[0,1]$ independent of $Z$. This allows concise descriptions of distributions resulting from rejection sampling.

\begin{lemma}[\cite{PDCDSH}]\label{pdc:rejection}
Under the assumptions of Theorem~\ref{thm:pdc} or Theorem~\ref{thm:pdcdsh}, pick any finite constant $C\ge\sup_x B(\cY_x)$.  We have
\[\L(A')= \L\left(A\ \bigg|\  U<\frac{B(\cY_a)}{C}\right).\]
That is, to sample from $\L(A')$, we first sample from $\L(A)$, and then reject with probability $1 - B(\cY_a) / C$. 
\end{lemma}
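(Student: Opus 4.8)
The plan is to verify directly that the rejection procedure on the right-hand side reproduces the conditional law $\L(A')$. First I would unwind the notation $\L(A \mid U < p(A))$ with $p(a) = B(\cY_a)/C$: by definition it is the first coordinate of the pair $(A,U)$ conditioned on $\{U < p(A)\}$, where $U$ is uniform on $[0,1]$ and independent of $A$. The hypothesis $C \geq \sup_a B(\cY_a)$ guarantees $p(a) \in [0,1]$ for every $a$, so this is a genuine acceptance probability.

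Next I would compute the overall acceptance probability. Since $U$ is independent of $A$,
\[\P\big(U < p(A)\big) = \E\big[p(A)\big] = \frac{1}{C}\int_{\cX} B(\cY_a)\,dA(a),\]
and by independence of $A$ and $B$ together with Tonelli's theorem, $\int_{\cX} B(\cY_a)\,dA(a) = \P\big((A,B) \in E\big)$. Applying the same disintegration to an arbitrary measurable $S \subseteq \cX$ gives
\[\P\big(A \in S,\ U < p(A)\big) = \frac{1}{C}\int_S B(\cY_a)\,dA(a) = \frac{1}{C}\,\P\big(A \in S,\ (A,B) \in E\big).\]
Dividing the two displays cancels the factor $1/C$ and yields $\P\big(A \in S \mid U < p(A)\big) = \P\big(A \in S \mid (A,B) \in E\big) = \P(A' \in S)$, which is precisely the assertion $\L(A') = \L(A \mid U < p(A))$.

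The structural hypotheses enter only through the requirement that $a \mapsto B(\cY_a)$ be well-defined and measurable and that the identity $\int_{\cX} B(\cY_a)\,dA(a) = \P((A,B) \in E)$ hold. Under Theorem~\ref{thm:pdc} this is immediate, since $E$ has positive probability and each $\cY_a$ is an honest measurable set. Under Theorem~\ref{thm:pdcdsh}, where $\P((A,B) \in E) = 0$, the symbol $B(\cY_a)$ must instead be interpreted through the conditional densities furnished by the random variables $T$ and $T_a$, and the required disintegration is exactly the content of \cite[Lemma~2.1, Lemma~2.2]{PDCDSH}; granting it, the computation above transfers verbatim with densities replacing probabilities. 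I expect this probability-zero case to be the only delicate point: one must read $B(\cY_a)/C$ as a ratio of densities to avoid the Borel-paradox ambiguity flagged earlier, and for this I would invoke the cited lemmas directly rather than re-derive the disintegration.
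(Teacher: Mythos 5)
Your argument is correct, and it is worth noting that the paper itself offers no proof of this lemma: it is imported wholesale from \cite{PDCDSH}, so there is nothing in the text to compare against beyond the citation. Your computation is the standard one --- acceptance probability $\E[p(A)] = \tfrac{1}{C}\int_{\cX} B(\cY_a)\,dA(a) = \tfrac{1}{C}\P((A,B)\in E)$ by independence and Tonelli, the analogous identity restricted to $\{A\in S\}$, and cancellation of $1/C$ --- and it is airtight in case (1), where $E$ has positive probability. You are also right to single out case (2) as the genuinely delicate point: when $T_a$ is continuous, $B(\cY_a)$ is literally zero for every $a$, so the statement of the lemma as printed is degenerate unless $B(\cY_a)$ is read as the density $f_{T_a}(k_a)$ and $C$ as a bound on $\sup_a f_{T_a}(k_a)$; deferring that disintegration to \cite[Lemma~2.1, Lemma~2.2]{PDCDSH} rather than re-deriving it is the same choice the authors make, and is the honest one given that the Borel--Kolmogorov issue is real. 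The only cosmetic remark is that the lemma's conditioning event is written with a lowercase $a$ where it means the random quantity $B(\cY_A)$; you silently correct this, which is fine, but it would be worth one sentence making the reading explicit.
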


\ignore{
\begin{theorem}[Probabilistic Divide and Conquer \cite{PDC}]
\label{thm:pdc}
For each $a\in\cX$, let $\cY_a=\{b\in\cY:(a,b)\in\cZ\}$, where $\P((A,B) \in E)>0$. Pick any constant $C\ge\sup_x B(\cY_x)$, and let
\[A'=\left(A\ \bigg|\  U<\frac{B(\cY_a)}{C}\right),\]
\[B'_a=\big(B\mid\cY_a\big).\]
Then, $\big(A', B'_{A'}\big)$ has the same distribution as $\big((A,B)\mid\cZ\big)$.
\end{theorem}

\ignore{
\begin{theorem}[Probabilistic Divide and Conquer \cite{PDC}]
\label{thm:pdc}
For each $x\in\cX$, let $\cY_x=\{y\in\cY:(x,y)\in\cZ\}$. Pick any constant $C\ge\max_x Y(\cY_x)$, and let
\[X'=\big(X\mid U<Y(\cY_x)/C\big),\]
\[Y'_x=\big(Y\mid\cY_x\big).\]
Then, $\big(X', Y'_{X'}\big)$ has the same distribution as $\big((X,Y)\mid\cZ\big)$.
\end{theorem}}

The proof follows by rejection sampling.  A similar theorem also holds for continuous-valued random variables, assuming certain regularity conditions. 

\begin{theorem}[PDC~\cite{PDCDSH}]\label{thm:pdcdsh}
For each $a\in\cX$, let $\cY_a=\{b\in\cY:(a,b)\in\cZ\}$. 
Suppose 
Pick any constant $C\ge\sup_x B(\cY_x)$, and let
\[A'=\left(A\ \bigg|\  U<\frac{B(\cY_a)}{C}\right),\]
\[B'_a=\big(B\mid\cY_a\big).\]
Then, $\big(A', B'_{A'}\big)$ has the same distribution as $\big((A,B)\mid\cZ\big)$.
\end{theorem}

\begin{proof}
Since $X'$ is a random variable, its probabilities sum to 1, so
\[\P\big[X'=x\big]=\frac{X(x)Y(\cY_x)}{\sum_{x\in\cX}X(x)Y(\cY_x)}.\]
Then, for any $(x,y)\in\cZ$,
\begin{align*}
\P\big[(X',Y'_{X'})=(x,y)\big]
&=\P\big[X'=x\big]\P\big[Y'_x=y\big]=\frac{X(x)Y(y)}{\sum_{x\in\cX}X(x)Y(\cY_x)} \\
&=\frac{X(x)Y(y)}{\P\big[(X,Y)\in\cZ\big]}=\P\big[\big((X,Y)\mid\cZ\big)=(x,y)\big].
\end{align*}
\end{proof}

A similar theorem holds for continuous random variables, assuming all random variables have densities. 
}

In our applications of PDC that follow, we indicate the use of PDC by specifying distributions $\L(A)$ and $\L(B)$ and an event $E$ such that the measure of interest is $\L\big( (A,B) \,\big|\, (A,B)\in E\big)$.  

\ignore{
\begin{algorithm}[H]
\caption{Na\"{\i}ve (suboptimal) rejection sampling of a uniformly random $(r,c)$-contingency table.}
\label{alg:lucky}
\begin{algorithmic}[1]
\FOR {$i=1,\ldots,n$}
  \FOR {$j=1,\ldots,m$}
    \STATE generate $x_{ij}$ from Geometric($\frac{m}{m+c_j}$)
  \ENDFOR
  \IF {$\sum_ix_{ij}=c_j$}
    \STATE let $C_j=(x_{1j},\ldots,x_{mj})$
  \ELSE
    \STATE \textbf{restart} from Line 2
  \ENDIF
\ENDFOR
\IF {$\sum_jx_{ij}=r_i$ for all $i$}
  \RETURN $(C_1,\ldots,C_n)$
\ELSE
  \RESTART from Line 1
\ENDIF
\end{algorithmic}
\end{algorithm}
}
\ignore{
\begin{lemma}
 Suppose $\X$ is a matrix of independent geometric random variables, where $X_{i,j}$ has parameter $p_{ij}=m/(m+c_j)$, $1\leq i \leq m$, $1 \leq j \leq n$.  
 Recall that $\gamma=\max_i|r_i-S/m|$.  
Then the expected column sums of $\X$ are $c$, and the expected rows sums of $\X$ are $N/m$ and thus uniformly within $\gamma$ of $r$.
\end{lemma}
\begin{proof}
For any $j=1,\ldots,n$,
\[\sum_i\E\big[x_{ij}\big]=\sum_i\bigg(\frac{m+c_j}{m}-1\bigg)=c_j.\]
Similarly, for any $i=1,\ldots,m$,
\[\sum_j\E\big[x_{ij}\big]=\sum_j\bigg(\frac{m+c_j}{m}-1\bigg)=\frac Nm.\qedhere\]
\end{proof}
} 

\ignore{
\subsection{PDC improvement}
\label{sect:PDC}
\ignore{Algorithm~\ref{alg:lucky} is a typical application of hard rejection sampling.  
  The random variables are independent, which makes sampling easy, but hitting the target is difficult. 
The fundamental difference between PDC and hard rejection sampling is that the rejection step is divided into separate stages.   
Algorithm~\ref{mainalgorithm} contains two applications of PDC, which we now describe.
}

Suppose there exist some sets $\mathcal{A}$ and $\mathcal{B}$, and a measurable functional $h : \mathcal{A} \times \mathcal{B} \to \{0,1\}$ such that our sampling algorithm can be described as sampling from $((A,B)\, |\, h(A,B) = 1)$ for independent sets $A \in \mathcal{A}$ and $B\in \mathcal{B}$.  
Then, instead of sampling $(A,B)$ and checking the condition $h(A,B) = 1$, we instead sample from $(A\, |\, h(A,B) = 1)$ first, say observing the value $A = x$, followed by $(B\, |\, h(x,B)=1)$, say observing the value $B = y$.  
The PDC Lemma, \cite[Lemma 2.1]{PDC}, says that the resulting pair $(x,y)$ is an exact sample from the distribution $\L((A,B)|h(A,B)=1)$.  

The approach championed in \cite{PDC, PDCDSH} for sampling from the distribution $\L(A | h(A,B)=1)$ is what we refer to as \emph{soft rejection sampling}, see \cite{Rejection}.  
The procedure is: sample from $\L(A)$, say observing the value $A = a$, and then reject this sample with probability $1-t(a)$, where 
\begin{equation}\label{eq:t(a)}
 t(a) = \frac{\P(h(a,B) = 1)}{\max_{a\in \mathcal{A}} \P(h(a,B) = 1)}.
\end{equation}
A value chosen this way then has 
distribution $\L(A | h(A,B)=1)$, and the number of times a sample from $\L(A)$ is rejected is geometrically distributed with expectation $\max_{a\in A} \P(h(a,B)=1)/\P(h(A,B)=1)$; see \cite[Section II.3]{devroye} for a thorough treatment.  
}
\ignore{
We first show an algorithm to randomly sample a single column which satisfies the column sum condition when the probabilities within each column are constant (the same applies analogously to rows).

\begin{algorithm}[H]
\caption{Random generation from $\L((X_1, X_2, \ldots, X_n) | \sum_{i=1}^n X_i = k)$}
\label{alg:iid}
\begin{algorithmic}
\STATE \textbf{Procedure:} Constrained\_Sum\_Vector$(\L(X_1), \ldots, \L(X_n), k)$
\STATE \textbf{Assume:} $\L(X_1), \L(X_2), \ldots, \L(X_n)$ are discrete with $\P(\sum_{i=1}^n X_i = k)>0.$
\STATE Let $r$ be any value in $\{1,\ldots, n\}$.  
\FOR {$i=1,\ldots, r$}
    \STATE Generate $X_{i}$ from $\L(X_i)$.  
\ENDFOR
\STATE Let $a \equiv (X_1, \ldots, X_r)$
\STATE Let $s \equiv	 \sum_{i=1}^r X_i$.  
  \IF {$U \geq \frac{ \P\left(\sum_{i=r+1}^n X_i = k - s\, |\, a \right) }{\max_\eta \P(\sum_{i=r+1}^n X_i = \eta) }$}
    \STATE \textbf{restart}
  \ELSE
    \STATE Let $(X_{r+1}, \ldots, X_n)$ = Constrained\_Sum\_Vector$(X_{r+1},\ldots,X_n, k-s)$
    \RETURN $(X_1, \ldots, X_{n})$.
  \ENDIF
\end{algorithmic}
\end{algorithm}
}

\ignore{
\begin{lemma}
Suppose for each $a, b, \in \{1,\ldots,n\}$, $a<b$, either
\begin{enumerate}
\item $\L(\sum_{i=a}^b X_i)$ is discrete and
\[ t(a) = \frac{ \P\left(\sum_{i=r+1}^n X_i = k - \sum_{i=1}^r X_i | X_1, \ldots, X_r \right) }{\max_\eta \P(\sum_{i=r+1}^n X_i = \eta) }; \]
or
\item $\L(\sum_{i=a}^b X_i)$ has a bounded density, denoted by $f_{a,b}$, and
\[ t(a) = \frac{ f_{r+1,n}\left(k - \sum_{i=1}^r X_i | X_1, \ldots, X_r \right) }{\max_\eta f_{r+1,n}(\eta) }. \]
\end{enumerate}
Then Algorithm~\ref{alg:iid} samples from $\L((X_1, X_2, \ldots, X_n) | \sum_{i=1}^n X_i = k).$
\end{lemma}
\begin{proof}
For any $r \in \{1,2,\ldots, n\}$, let $A = (X_1, \ldots, X_r)$, $B = (X_{r+1}, \ldots, X_n)$, and $h(A,B) = \mathbbm{1}(\sum_{i=1}^n X_i = k)$.  The rejection probability $t(a)$ follows by Equation~\eqref{eq:t(a)}, so that each time the function Constrained\_Sum\_Vector is called, it returns $(A | h(A,B) = 1)$.  When $n=1$, we take $A = (X_1)$ and return the value of the target sum, since in this case $t(a) = 1$.  
\end{proof}
} 

\ignore{
If for each value of $a \in \mathcal{A}$ there is a unique completion $b\in\mathcal{B}$ such that $h(a,b) = 1$, then we call this PDC division \emph{deterministic second half}; see \cite{PDC}.  
Furthermore, as was noted in \cite{PDCDSH}, when $\P(h(A,B) = 1 | A = a)$ is the same for each $a \in \mathcal{A}$, then we have $t(a) = \mathbbm{1}(\text{$a$ can be completed by a $b\in \mathcal{B}$})$, and we accept with probability 1 all samples $a$ for which a (unique) completion exists;  see \cite[Theorem 7.1]{PDCDSH}.
}

\ignore{ 
\begin{theorem}\label{main theorem}
Algorithm~\ref{mainalgorithm} samples integral points uniformly in $E$.
\end{theorem}
\begin{proof}
Algorithm~\ref{mainalgorithm} applies PDC twice.
The first application of PDC is applied to the random sampling of columns $j=2,3,\ldots,n$ subject to the column sum conditions, for which Algorithm~\ref{alg:columns} is optimal. 
The second application of PDC is for the sampling of column 1, for which PDC deterministic second half applies by Lemma~\ref{lem:luckyuniformity}.  
\end{proof}
}

\ignore{
\begin{theorem}
The run--time of Algorithm~\ref{mainalgorithm} is 
\[ O\left( \frac{m \sqrt{nm}r_{max}^{(m-1)/2} }{ \binom{c_1+m-1}{c_1}(1-p)^m p^{c_1} }\right) = O\left(\frac{m \sqrt{nm}r_{max}^{(m-1)/2} }{ \frac{p}{1-p}m }\right) ,  \]
where $p_{i,j} = m/(m+c_j)$ is constant among all elements in the same column.  (We probably need to modify the numerator since I believe this still assumes $p_{i,j} = n/(n+r_i)$.)
\end{theorem}
\begin{proof}
The denominator is the speedup from PDC deterministic second half, since instead of simulating the first column, we always accept if it can be completed given the other entries in the table.  Thus, 

Uniformity follows from Lemma \ref{lem:luckyuniformity}. For runtime, the idea is to use local limit theorem to bound the probability that the column sums are $c$, and the probability that the row sums are $r$ given that the column sums are $c$.

The characteristic function of $x_{ij}$ is
\[\phi_{ij}(z)=\E\big[e^{ix_{ij}z}\big]=\frac{p_{ij}}{1-(1-p_{ij})e^{iz}}=\frac{n}{n+r_i(1-e^{iz})},\]
where the symbol $i$ refers to the row index when it appears as a subscript and $\sqrt{-1}$ otherwise. Then, the characteristic function of $\sum_ix_{ij}$ is
\[\phi_j(z)=\prod_i\phi_{ij}(z)=\prod_i\frac1{1+\frac{r_i}{n}(1-e^{iz})}.\]
Hence,
\[\P\bigg[\sum_ix_{ij}c_j\bigg]=\frac1{2\pi}\int_{-\pi}^\pi e^{-ic_jz}\prod_i\frac1{1+\frac{r_i}{n}(1-e^{iz})}\,dz.\]
For any $\epsilon>0$ and $\epsilon\le|z|\le\pi$, the integrand is bounded by
\[\prod_i\bigg|\frac1{1+\frac{r_i}{n}(1-e^{iz})}\bigg|\le\prod_i\frac1{1+\frac{r_i}n(1-\cos\epsilon)}\le\frac1{\sum_i\frac{r_i}n(1-\cos\epsilon)}=\frac n{N(1-\cos\epsilon)},\]
which vanishes uniformly. For $|z|<\epsilon$, the integrand is
\[\exp\bigg({-{}}ic_jz+\sum_i\log\phi_{ij}(z)\bigg)=\exp\big({-{}}ic_jz+i\mu z-\tfrac12\sigma^2z^2+o(z^2)\big),\]
where $\mu=\sum_i\E[x_{ij}]=N/n$ and $\sigma^2=\sum_i\Var[x_{ij}]$.

(The rest of this part is easy but tedious... is there a good reference for it?)

Thus, the inner loop of Algorithm \ref{alg:lucky} takes $O(\sqrt{\bar x\bar c})$ steps, where $\bar x=N/nm$ and $\bar c=N/n$, hence the outer loop takes $O(m\sqrt{\bar x\bar c})$ steps. The runtime of the algorithm itself is given by the runtime of the outer loop divided by the probability of rejection; it remains to bound this probability.
\end{proof}
}





\ignore{
\textbf{(Consolidate this subsection into a few paragraphs of text)}

Since a contingency table is simply an integer matrix with restrictions on its row and column sums, a simple sampling algorithm can be devised by repeatedly generating random integer matrices and waiting until we obtain a matrix with the correct row and column sums. This technique, known as \emph{rejection sampling}, has runtime inversely proportional to the probability of ``getting lucky'', which we will see is prohibitively unlikely. Nonetheless, it will be useful for us to begin by writing down and analysing the rejection sampling algorithm.


Let $\gamma=\max_j|c_j-S/n|$ be the variation in the columns. Let $x=(x_{ij})$ be a matrix of independent geometric random variables with probability of success $p_{ij}=n/(n+r_i)$.

\begin{lemma}
The expected row sums of $x$ are $r$, and the expected column sums of $x$ are $N/n$ and thus uniformly within $\gamma$ of $c$.
\end{lemma}
\begin{proof}
For any $i=1,\ldots,m$,
\[\sum_j\E\big[x_{ij}\big]=\sum_j\bigg(\frac{n+r_i}{n}-1\bigg)=r_i.\]
Similarly, for any $j=1,\ldots,n$,
\[\sum_i\E\big[x_{ij}\big]=\sum_i\bigg(\frac{n+r_i}{n}-1\bigg)=\frac Nn.\qedhere\]
\end{proof}

\begin{lemma}
The conditional distribution of $x$ given $\sum_jx_{ij}=r_i$ for all $i$ and $\sum_ix_{ij}=c_j$ for all $j$ is that of a uniformly random $(r,c)$-contingency table.
\end{lemma}
\begin{proof}
 For any $(r,c)$-contingency table $\xi$,
\[\P\big[x=\xi\big]=\prod_{i,j}\bigg(\frac{r_i}{n+r_i}\bigg)^{\xi_{ij}}\bigg(\frac{n}{n+r_i}\bigg)=\prod_i\bigg(\frac{r_i}{n+r_i}\bigg)^{c_j}\bigg(\frac{n}{n+r_i}\bigg)^m.\]
Since this probability does not depend on $\xi$, it follows that the restriction of $x$ to $(r,c)$-contingency tables is uniform.
\end{proof}

For $j=1,\ldots,n$, let $C_j=(C_{1j},\ldots,C_{mj})$ be independent random vectors with distribution given by $(x_{1j},\ldots,x_{mj})$ conditional on $\sum_ix_{ij}=c_j$, that is,
\[\P\big[C_j=(\xi_{1j},\ldots,\xi_{mj})\big]=\frac{\P\big[x_{1j}=\xi_{1j},\ldots,x_{mj}=\xi_{mj}\big]}{\P\big[\sum_ix_{ij}=c_j\big]}\]
for all non-negative integer vectors $\xi_j$ with $\sum_i\xi_{ij}=c_j$ and 0 otherwise.

\begin{lemma}
The conditional distribution of $C=(C_1,\ldots,C_n)$ given $\sum_jC_{ij}=r_i$ for all $i$ is that of a uniformly random $(r,c)$-contingency table.
\label{lem:luckyuniformity}
\end{lemma}
\begin{proof}
For any $(r,c)$-contingency table $\xi$, $\P[C=\xi]$ is a constant multiple of $\P[x=\xi]$.
\end{proof}
} 

\ignore{
This immediately leads to a simple rejection sampling algorithm.

\begin{algorithm}[H]
\caption{Generation of a uniformly random $(r,c)$-contingency table.}
\label{alg:lucky naive}
\begin{algorithmic}[1]
\FOR {$i=1,\ldots,m$ and $j=1,\ldots,n$}
  \STATE generate $x_{ij}$ from Geometric($\frac{n}{n+r_i}$)
\ENDFOR
\IF {$\sum_ix_{ij}\leq c_j$ for all $i$ and $\sum_jx_{ij} \leq r_i$ for all $j$}
  \RETURN $x$
\ELSE 
  \RESTART from Line 1
\ENDIF
 \end{algorithmic}
\end{algorithm}

However, the independence is stronger, in that sense that entries in different rows (columns, resp.) are conditionally independent.  This means that we can separately sample all of the rows (columns, resp.) independently until all of the row (column, resp.) conditions are satisfied.  Then we reject if any of the column (row, resp.) conditions are violated, as is outlined in Algorithm~\ref{alg:lucky}.  

\begin{algorithm}[H]
\caption{Generation of a uniformly random $(r,c)$-contingency table.}
\label{alg:lucky}
\begin{algorithmic}[1]
\FOR {i=1,\ldots,n}
  \FOR {$j=1,\ldots,m$}
    \STATE generate $x_{ij}$ from Geometric($\frac{n}{n+r_i}$)
  \ENDFOR
  \IF {$\sum_ix_{ij}=c_j$}
    \STATE let $C_j=(x_{1j},\ldots,x_{mj})$
  \ELSE
    \STATE \textbf{restart} from Line 2
  \ENDIF
\ENDFOR
\IF {$\sum_jx_{ij}=r_i$ for all $i$}
  \RETURN $(C_1,\ldots,C_n)$
\ELSE
  \RESTART from Line 1
\ENDIF
\end{algorithmic}
\end{algorithm}

\begin{theorem}
Assume that $N/n\rightarrow\infty$ and $\gamma=\max_j|c_j-N/n|=O(\sqrt{N/n})$. Then, Algorithm \ref{alg:lucky} terminates in time $O(m\sqrt{nm}r_{max}^{(m-1)/2})$ and returns a uniformly random $(r,c)$-contingency table.
\end{theorem}
\begin{proof}
Uniformity follows from Lemma \ref{lem:luckyuniformity}. For runtime, the idea is to use local limit theorem to bound the probability that the column sums are $c$, and the probability that the row sums are $r$ given that the column sums are $c$.

The characteristic function of $x_{ij}$ is
\[\phi_{ij}(z)=\E\big[e^{ix_{ij}z}\big]=\frac{p_{ij}}{1-(1-p_{ij})e^{iz}}=\frac{n}{n+r_i(1-e^{iz})},\]
where the symbol $i$ refers to the row index when it appears as a subscript and $\sqrt{-1}$ otherwise. Then, the characteristic function of $\sum_ix_{ij}$ is
\[\phi_j(z)=\prod_i\phi_{ij}(z)=\prod_i\frac1{1+\frac{r_i}{n}(1-e^{iz})}.\]
Hence,
\[\P\bigg[\sum_ix_{ij}c_j\bigg]=\frac1{2\pi}\int_{-\pi}^\pi e^{-ic_jz}\prod_i\frac1{1+\frac{r_i}{n}(1-e^{iz})}\,dz.\]
For any $\epsilon>0$ and $\epsilon\le|z|\le\pi$, the integrand is bounded by
\[\prod_i\bigg|\frac1{1+\frac{r_i}{n}(1-e^{iz})}\bigg|\le\prod_i\frac1{1+\frac{r_i}n(1-\cos\epsilon)}\le\frac1{\sum_i\frac{r_i}n(1-\cos\epsilon)}=\frac n{N(1-\cos\epsilon)},\]
which vanishes uniformly. For $|z|<\epsilon$, the integrand is
\[\exp\bigg({-{}}ic_jz+\sum_i\log\phi_{ij}(z)\bigg)=\exp\big({-{}}ic_jz+i\mu z-\tfrac12\sigma^2z^2+o(z^2)\big),\]
where $\mu=\sum_i\E[x_{ij}]=N/n$ and $\sigma^2=\sum_i\Var[x_{ij}]$.

(The rest of this part is easy but tedious... is there a good reference for it?)

Thus, the inner loop of Algorithm \ref{alg:lucky} takes $O(\sqrt{\bar x\bar c})$ steps, where $\bar x=N/nm$ and $\bar c=N/n$, hence the outer loop takes $O(m\sqrt{\bar x\bar c})$ steps. The runtime of the algorithm itself is given by the runtime of the outer loop divided by the probability of rejection; it remains to bound this probability.

\end{proof}
}

\ignore{
\subsection{PDC algorithms}
Algorithm~\ref{alg:luckynaive} and Algorithm~\ref{alg:lucky} are typical applications of rejection sampling.  
  The random variables are independent, which makes sampling easy, but hitting the target is difficult. 
The fundamental difference between PDC and rejection sampling is that the rejection step is divided into separate stages.   
Algorithm~\ref{Naive PDC integer} is a pseudocode implementation of Algorithm~\ref{Naive PDC DSH} for contingency tables using PDC deterministic second half.  

In the algorithms that follow, $U$ will denote a uniform random variable in the interval $[0,1]$ independent of all other variables.

\begin{algorithm}[H]
\caption{Generation of a uniformly random non--negative integer--valued $(r,c)$-contingency table via PDC deterministic second half.}
\label{Naive PDC integer}
\begin{algorithmic}[1]
\FOR {$i=2,\ldots,m$}
\FOR {$j=2,\ldots,n$}
\STATE Generate $x_{ij}$ from Geometric($p_{i,j}$)
\ENDFOR
\ENDFOR
\IF {$\sum_ix_{ij}\leq c_j$ for all $i$ and $\sum_jx_{ij} \leq r_i$ for all $j$}
  \FOR {$j=2,\ldots,n$}
  	\STATE $x_{1,j} = c_j - \sum_i x_{i,j}$
	\IF {$U \geq p_{i,j}^{x_{1,j}}$}
		\STATE \textbf{restart};
	\ENDIF
  \ENDFOR
   \FOR {$i=1,\ldots,m$}
  	\STATE $x_{i,1} = r_i - \sum_{j}x_{i,j}$ 
	\IF {$U \geq p_{i,j}^{x_{i,1}}$}
		\STATE \textbf{restart};
	\ENDIF
  \ENDFOR
\ELSE
  \STATE \textbf{restart}
\ENDIF
\STATE \textbf{return} $(x_{i,j})_{i,j}$
\end{algorithmic}
\end{algorithm}

\begin{theorem} \label{PDC DSH cost}
Let $C$ denote the expected number of times Algorithm~\ref{alg:lucky naive} restarts.  Algorithm~\ref{Naive PDC integer} has expected number of restarts $C / \prod_{i=1}^m (1-p_{i,1}) \prod_{j=2}^n (1-p_{1,j})$.  
\end{theorem}

The significance of Theorem~\ref{PDC DSH cost} is that by choosing a slightly more complicated algorithm, we gain a significant improvement in run--time cost.  In fact, when the entries in the table are continuous--valued, the following algorithm has an expected run--time that is finite, in contrast with the rejection sampling algorithm which has an infinite run--time with probability 1.  In this case we replace geometric random variables with exponential random variables.

\begin{algorithm}[H]
\caption{Generation of a uniformly random non--negative continuous--valued $(r,c)$-contingency table via PDC deterministic second half.}
\label{Naive PDC continuous}
\begin{algorithmic}[1]
\FOR {$i=2,\ldots,m$}
\FOR {$j=2,\ldots,n$}
\STATE Generate $x_{ij}$ from Exponential($\lambda_{i,j}$)
\ENDFOR
\ENDFOR
\IF {$\sum_ix_{ij}\leq c_j$ for all $i$ and $\sum_jx_{ij} \leq r_i$ for all $j$}
  \FOR {$j=2,\ldots,n$}
  	\STATE $x_{1,j} = c_j - \sum_i x_{i,j}$
	\IF {$U \geq \exp(-\lambda_{i,j} \, x_{1,j})$}
		\STATE \textbf{restart};
	\ENDIF
  \ENDFOR
   \FOR {$i=1,\ldots,m$}
  	\STATE $x_{i,1} = r_i - \sum_{j}x_{i,j}$ 
	\IF {$U \geq \exp(-\lambda_{i,j} \, x_{i,1})$}
		\STATE \textbf{restart};
	\ENDIF
  \ENDFOR
\ELSE
  \STATE \textbf{restart}
\ENDIF
\STATE \textbf{return} $(x_{i,j})_{i,j}$
\end{algorithmic}
\end{algorithm}

Algorithm~\ref{Naive PDC integer} and Algorithm~\ref{Naive PDC continuous} represent the simplest application of PDC deterministic second half.  
We now improve these algorithms in the same spirit as Algorithm~\ref{alg:lucky naive} is improved by Algorithm~\ref{alg:lucky}.

} 

\ignore{
Instead of sampling the partial table all at once, we note the same conditional independence of entries in different rows (columns, resp.).  We apply PDC DSH to each row (column, resp.) until all conditions are satisfied.  Then as long as none of the columns (rows, resp.) are rejected, we keep the sample.  In order to unify both the discrete and continuous case, we denote the cutoff function by $t$, and either replace $t(a)$ with the normalization of the probability mass function $\P(X = a)/\max_\ell \P(X=\ell)$ or the probability density function $f_X(a) / \sup_\ell f_X(\ell)$.

\begin{algorithm}[H]
\caption{PDC DSH improved generation of a uniformly random $(r,c)$-contingency table.}
\label{PDC DSH improved}
\begin{algorithmic}[1]
\FOR {$j=2,\ldots,n$}
\STATE \textbf{for} $i=2,\ldots,m$, generate $x_{i,j}$ from Geometric($\frac{n}{n+r_i}$)
\STATE let $x_{1,j} = c_j - \sum_{i} x_{i,j}$
\IF {$x_{1,j}\geq 0$ and $U < t(x_{1,j})$}
  \STATE let $C_j=(x_{1j},\ldots,x_{mj})$
\ELSE
  \STATE \textbf{restart} from Line 2
\ENDIF
\ENDFOR
\FOR {$i=1,\ldots,m$}
\STATE let $x_{i,1} = r_i - \sum_j x_{i,j}$
\IF {$x_{i,1} < 0$ or $U \geq  t(x_{i,1})$}
  \STATE restart from Line 1
\ENDIF
\ENDFOR
\STATE \textbf{return} $(C_1,\ldots,C_n)$
\end{algorithmic}
\end{algorithm}

\begin{theorem}
Let $P_j$ denote the expected number of times Algorithm~\ref{alg:lucky} resamples column $j$.  Let $Q$ denote the expected number of times Algorithm~\ref{alg:lucky} restarts due to a row violation conditional on all column conditions satisfied.  We have 
\[ \text{Cost of Algorithm~\ref{alg:lucky}} = Q\sum_{j=2}^n P_j. \]
\[ \text{Cost of Algorithm~\ref{PDC DSH improved}} = \frac{Q}{\prod_{i=1}^n p_{i,1}} \sum_{j=2}^n \frac{P_j}{p_{1,j}}. \]
\end{theorem}

Even without precise run--time estimates, we easily see that the cost for PDC deterministic second half is strictly less than rejection sampling.  
} 

\ignore{
The PDC Lemma (see \cite[Lemma 2.1]{PDC}) affirms that this is indeed an exact sample from the target distribution.  In \cite{PDC}, several types of partitions of the sample space were championed; one is particularly noteworthy for its simplicity and effectiveness: let $I_A$ be such that the distribution $\cL((X_i)_{i\in I_B}|E,x_A)$ is trivial.  This is called deterministic second half, see \cite{PDCDSH}, and it is this division that we use in our algorithms.  

In fact, the combination of deterministic second half and rejection sampling is surprisingly simple and powerful.  If we assume, for example, that the distribution of $\cL(X_1 | E, X_2, \ldots, X_n)$ is trivial, then we can use von Neumann's rejection sampling technique \cite{Rejection} to avoid sampling from conditional distributions directly, see Algorithm~\ref{PDC DSH Sampling}.  In what follows, let $U$ denote a uniform random variable in $[0,1]$, and let $y_{1} \equiv y_{1,E,x_A}$ denote the unique value of $X_1$ which completes a sample in $E$ given the other coordinates.  
\begin{algorithm}{\rm
\begin{algorithmic}
\STATE 1.  Generate sample from $\cL(X_2, \ldots, X_n)$, call it $x_A$.
\STATE 2.  If $U < \P(X_1=y_1) / \max_\ell \P(X_1 = \ell)$, return $(y_1, x_A);$\\
\ \ \ \ otherwise; restart.
\end{algorithmic}}
\caption{PDC Deterministic Second Half Sampling of $(X_1, X_2, \ldots, X_n | E)$}
\label{PDC DSH Sampling}
\end{algorithm}

In the context of contingency tables, the picture is the following:
} 

\ignore{
Algorithm~\ref{alg:lucky} is an example of rejection sampling, where a sample is either rejected or accepted with probability 0 or 1 depending on whether it lies within a certain region.  We shall call this \emph{hard rejection sampling}, in order to distinguish it from a related approach in which rejection is decided based on a non--trivial biased coin-flip, see for example \cite{Rejection}, which we call \emph{soft rejection sampling}.  We now describe and apply the probabilistic divide-and-conquer technique introduced in \cite{PDC} and \cite{PDCDSH}.  

Let $(X_1, X_2, \ldots, X_n)$ denote an $\R^n$--valued stochastic process with independent coordinates, with known marginal distributions $\L(X_1), \ldots, \L(X_n)$.  Let $E \subset \R^n$ be a Borel--measurable set.  Suppose the random object we wish to sample can be written as
\[ \L(\X') = \L( (X_1, \ldots, X_n) | E ). \]
When $\P(E)>0$, then the simplest algorithm is rejection sampling.  

Suppose there exist sets $\A$ and $\B$ such that our sample space is some $\Omega \subset \A \times \B$.  Suppose further that all elements in $\Omega$ can be represented as pairs $(A, B)\in\Omega$, where $A\in \A$ and $B\in \B$ are independent random variables, subject to membership in a measurable event $E$, and we can write $\P(E)$ as a functional $h : \A \times \B \to \{0,1\}$ such that $p := \e h(A,B) = \P(E) > 0$.  Then the set $S\subset \Omega$ consists of all elements $(A,B) \in \Omega$ such that $h(A,B) = 1$; that is,
\begin{equation}\label{pdc} \L(S) = \L( (A,B) | h(A,B)=1 ). \end{equation}

\begin{lemma}[\cite{PDC} Lemma 2.1]
Assume $\e h(A,B) > 0$.  Suppose $X$ is the random element of $\A$ with distribution
\[ \L(X) = \L(A | h(A,B)=1), \]
and $Y$ is the random element of $\B$ with conditional distribution 
\[ \L(Y | X=a ) = \L(B | h(a,B) = 1). \]
Then $(X,Y) =^d S$, i.e., the pair $(X,Y)$ has the same distribution as $S$, given by~\eqref{pdc}.  
\end{lemma}

It was shown in \cite{PDCDSH} that the assumption $\e h(A,B)>0$ is not necessary, as long as certain regularity conditions are satisfied.  In particular, we introduce the following notation which will be utilized in the sections that follow:

\begin{definition}
For any subset of indices $I \subset \{1,\ldots, n\}$, 
\begin{enumerate}
\item let $X_I = \pi_I(\X) = (X_i)_{i\in I}$ denote the $\R^{|I|}$--valued projection;
\item let $\XI = \pi_{[n]\setminus I}(\X)$ denote the $\R^{n-|I|}$--valued projection;
\item define the \emph{$I$-completable-set} of $E_n$ as 
\[\EI :=  \pi_{[n]\setminus I}^{-1}(E_n)  =  \{ x\in \R^{n-|I|} : \exists y \in \R^{|I|} \text{ such that } (y, x) \in E_n \};
\]
\item define the \emph{$I$-section} of $E_n$ given $\xI\in \EI$ as 
\[E_I \equiv E_{I|\xI} :=  \pi_I^{-1}(E_n\, |\,\xI)  =  \{ y\in \R^{|I|} : (y, \xI) \in E_n \}.
\]
\end{enumerate}
\end{definition}

\begin{assumption}{\rm 
In what follows, the set $I$ denotes some subset of indices $I \subset\{1,\ldots, n\}$, $T$ and $T_I$ denote random variables, and $k \in \text{\rm range($T$)}$ is a non--random scalar.

\begin{enumerate}
\item[(A1)]\label{A1} $\{\X \in E\} = \{T = k\}$;
\item[(A2)]\label{A2} $\P( \XI~\in~\EI) > 0;$
\item[(A3)] $\L(T_I) \equiv \L(T_I\, |\, \XI) = \L( T\, |\, \XI); $\\
\item[(D1)] $T$ is a discrete random variable and $\P(T=k)>0$;  
\item[(D2)] $T_I$ is discrete for each $\XI \in \EI$; \\
\item[(C1)] $T$ has a density, denoted by $f_T$, and $0<f_T(k)<\infty$;
\item[(C2)] $T_I$ has a density for each $\XI \in \EI$;  
\item[(C2')]  For each $\xI \in \EI$, the density of $T_I$ is bounded. \\
\item[(DSH)]For each $\xI \in \EI$, $|E_{I|\xI}| = |\{\yI\}| = 1$.

\end{enumerate}
}\end{assumption}

\begin{lemma}[\cite{PDCDSH} Lemma 2.1] \label{PDC lemma discrete}
Assume (A1)--(A3), (D1), (D2).  Let
\[ \L(U) = \L\left(\XI \big| \X\in E\right), \qquad \L\left(V \big| U=\xI\right) = \L\left(\, T\, \big|\, \XI = \xI, \X\in E\right).  
\]
Then $(U,V) =^d \X_n'$.  
\end{lemma}

\begin{lemma}[\cite{PDCDSH} Lemma 2.2]\label{PDC lemma continuous}
Assume (A1)--(A3), (C1), (C2).  Let
\[ \L(U) = \L\left(\XI \big| \X\in E\right), \qquad \L\left(V \big| U=\xI\right) = \L\left(\, T\, \big|\, \XI = \xI, \X\in E\right).  
\]
Then $(U,V) =^d \X_n'$.  
\end{lemma}

Now we specialize to the case where $T = \sum_{i=1}^n X_i$, $E = \{T = k\},$ for some $k$ in the range of $T$, and $|I| = 1$.

\begin{corollary}
When $T = \sum_{i=1}^n X_i$ and $I = \{i\}$ for some $1\leq i \leq n$, let
\[ \L(U) = \L( (X_1, \ldots, X_{i-1}, X_{i+1}, \ldots, X_n) | T=k), \qquad \L(V \big| U = \xI)  = \L\left(k - \sum_{j\neq i} x_i\right). \]
Then $(U,V) =^d \X_n'$.
\end{corollary}

The PDCDSH algorithm is then as follows.

\begin{algorithm}
\caption{$(X_1, X_2, \ldots, X_{n}\ |\ \sum_{i=1}^n X_i = k)$, discrete random variables}
\begin{algorithmic}[1]
\STATE \assume (A1)--(A3), (D1), (D2), (DSH)  
\STATE Sample from $\L(\XI)$, denote the observation by $\xI$.
\IF {$\xI \in \EI$ \text{ and } $u < \frac{P\left(X_I = y_I\right)}{\max_\ell P(X_I = \ell)}$ }
		\STATE \return $(x_1, \ldots, x_n)$
\ELSE
\STATE \restart
\ENDIF
\end{algorithmic} \label{PDC discrete}
\end{algorithm}

\begin{algorithm}
\caption{$(X_1, X_2, \ldots, X_{n}\ |\ \sum_{i=1}^n X_i = k)$, continuous random variables}
\begin{algorithmic}[1]
\STATE \assume (A1)--(A3), (C1), (C2), (C2'), (DSH) 
\STATE Sample from $\L(\XI)$, denote the observation by $\xI$.
\IF {$\xI \in \EI$ \text{ and } $u < \frac{f_{X_I}\left(y_I\right)}{\sup_\ell f_{X_I}(\ell)}$ }
		\STATE \return $(x_1,\ldots, x_n)$
\ELSE
\STATE \restart
\ENDIF
\end{algorithmic} \label{PDC continuous}
\end{algorithm}

\subsection{Proof of Uniformity}
For contingency tables with nonnegative integer entries, we have 
\[ E = \left\{ \sum_{i=1}^m X_{i,j} = c_j, \  j=1,\ldots,n,\ \sum_{j=1}^n X_{i,j} = r_i, \ i=1,\ldots,m\right\}, \]
\[ \L(S) = \L\left( (X_{1,1}, X_{2,1}, \ldots, X_{m,n}) \ \bigg| \sum_{i=1}^m X_{i,j} = c_j, \  j=1,\ldots,n,\ \sum_{j=1}^n X_{i,j} = r_i, \ i=1,\ldots,m \right). \]
To apply probabilistic divide-and-conquer, we choose any partition of $\{ (1,1), (2,1) \ldots, (m,n)\}$, say $\I$ and $\J$, and let $A = (X_i)_{i\in \I}$ and $B = (X_i)_{i\in\J}$.  
In order to apply probabilistic divide-and-conquer, we must be able to sample from the conditional distributions $\L(A|h(A,B)=1)$ and $\L(B|h(a,B)=1)$.  In \cite{PDCDSH}, an approach called PDC deterministic second half selects the partition $\I$ in such a way that $\L(B | h(a,b)=1)$ is a trivial distribution, hence deterministic.  Thus, we simply need to be able to sample from $\L(A|h(A,B)=1).$

In fact, in \cite{PDCDSH}, it was demonstrated that when the conditional and unconditional distributions are similar, a sample from the conditional distribution can be obtained from a sample generated from the unconditional distribution via soft rejection sampling, see \cite{Rejection}.  I.e., one samples from the unconditional distribution, flips a biased coin which depends on the observed value, and accepts in proportion to the likelihood of that sample occurring in the conditional distribution.  This technique was successfully applied in the context of integer partitions in \cite{PDC}.

\ignore{
Suppose the random variables $X_1, X_2, \ldots, X_n$ are discrete and independent, and we wish to sample from
\[ \mathcal{L}(\X') := \mathcal{L}\left( (X_1, \ldots, X_n) \Big| \sum_{i=1}^n X_i = k \right).
\]
We define $T:= \sum_{i=1}^n X_i$ for the duration of this section, and assume $k\in\text{range}(T)$.  Since the event $\{T = k\}$ has a positive probability, the waiting-to-get-lucky algorithm samples $n$--tuples $(X_1, \ldots, X_n)$ until the random sum satisfies $T = k$, i.e., hits the target.

The probabilistic divide-and-conquer algorithm, trivial second half, proceeds as in Algorithm~\ref{PDC}.

\begin{algorithm}[H]
\caption{PDC Discrete Trivial Second Half $(X_1, X_2, \ldots, X_{n}| T = k)$}
\begin{algorithmic}[1]
\STATE \assume $P(T=k)>0$.  
\STATE Select any index $I$.
\STATE Sample $x_1, \ldots, x_{I-1}, x_{I+1}, \ldots, x_n$
\STATE Let $t_I \leftarrow k - t_n^I$
\IF {$t_I \in \range(X_I)$ }
	\IF {$u < \frac{P(X_I = t_I)}{\max_\ell(P(X_I = \ell))}$}
		\STATE $x_I \leftarrow t_I$
		\STATE \return $x$
	\ENDIF
\ELSE
\STATE \restart
\ENDIF
\end{algorithmic} \label{PDC}
\end{algorithm}

For contingency tables, we have the multivariate version of this distribution, namely, we have $m\, n$ independent random variables $X_{1,1}, X_{1,2},\ldots,X_{m,n}$, and we wish to sample from the distribution
\[ \mathcal{L}(\Y') := \mathcal{L}\left( (X_{1,1},\ldots,X_{m,n}) \Big| \sum_{i=1}^n X_{i,j} = c_j,\ j=1,\ldots,n, \ \sum_{j=1}^m X_{i,j} = r_i,\ i=1,\ldots,m \right).
\]

This conditioning event will be denoted by $D(r_1,\ldots,r_m,c_1,\ldots,c_n)$.  The informal algorithm is as follows
\begin{enumerate}
\item Ignore the row conditions.  Sample column 1 by trivial second half by sampling all but one random variable, and complete the column using rejection.  Select the random variable with the largest entropy, and let that be the lone index.
\item Repeat the above procedure for columns $2, \ldots, n-1$.
\item For the last column, fill in the values (trivially) which would satisfy each of the row constraints.  If any of the values is negative, or larger than $c_n$, then reject the entire sample and start over.  Otherwise, if all entries were possible outcomes, then reject this outcome in proportion to its likelihood of occurring in a uniform sample (this is the PDC step).
\end{enumerate}
} 

WLOG, assume $ r_1 \geq r_2 \geq \ldots \geq r_m$ and $c_1 \geq c_2 \geq \ldots \geq c_n$.  A \emph{simple} algorithm is as follows:

\begin{enumerate}
\item Sample from the \emph{almost} full table below
\[
\begin{array}{|c|c|c|c|c|c|}
\hline
 \ast & \ast & \ldots & \ast & \ast & r_1\\ \hline
\ast &X_{2,2}  & X_{2,3} & \ldots  &X_{2,n} & r_2 \\ \hline
\ast &X_{3,2} & X_{3,3} & \ldots &X_{3,n}  & r_3 \\ \hline
\ast & \vdots & \vdots & \ddots & \vdots & \vdots \\ \hline
\ast & X_{m,2} & X_{m,3} & \ldots &X_{m,n} & r_{m} \\ \hline
c_1 & c_2 & c_3 & \ldots & c_n &  \\\hline
\end{array}
\]
\item Once the entries above are filled in, the entries denoted by $\ast$ are uniquely determined by the conditions.  Reject with the right proportion.
\item If \emph{all} entries are accepted, then accept the sample; otherwise, restart.
\end{enumerate}

We now show how to calculate the right proportion.  A general PDCDSH algorithm is given below for sums of random variables $T = \sum_{i=1}^n X_i$.  

\begin{algorithm}[H]
\caption{PDC Discrete Trivial Second Half $(X_1, X_2, \ldots, X_{n}| \sum_{i=1}^n X_i = k)$}
\begin{algorithmic}[1]
\STATE \assume $P(\sum_i X_i =k)>0$.  
\STATE Select any index $I \subset \{1,\ldots, n\}$.
\STATE Sample $x_1, \ldots, x_{I-1}, x_{I+1}, \ldots, x_n$
\STATE Let $x_I \leftarrow k - \sum_{i\neq I} x_i$
	\IF {$u < \frac{P(X_I = x_I)}{\max_\ell(P(X_I = \ell))}$}
		\STATE \return $(x_1, \ldots, x_n)$
\ELSE
\STATE \restart
\ENDIF
\end{algorithmic} \label{PDC}
\end{algorithm}

If any of the $k_j < 0$ then we reject the entire sample and start over.  Otherwise, we apply the rejection probabilities for each of the random variables; this can be combined into the single step of rejecting the entire sample if a uniform random variable $u$ in the interval $[0,1]$ is such that
\[ u \geq \prod_{j=2}^n (1-p_{1,j})^{k_j}. \]
While this may seem like an overwhelming rejection threshold, we note that it represents a speedup of $\prod_{j=1}^n p_{1,j}^{-1}$ to the alternative waiting-to-get-lucky algorithm.  This is because the rejection probabilities are normalized by the maximum of the distribution, and hence the most likely outcome (the one with point probability 0), rather than having probability $p_{1,j}$ of acceptance, is scaled up to have probability 1.

}

\ignore{
\subsection{Generating columns with given sum efficiently}

Algorithm~\ref{mainalgorithm} requires generating columns $C_j$ from the uniform distribution on non-negative integer vectors with sum $c_j$. By Lemma \ref{lemma:uniform}, this can be achieved by taking $X_i$, $1\le i\le m$, to be i.i.d.~geometric random variables with probability of success $p=m/(m+c_j)$ chosen so their expected sum is $c_j$, and letting $C_j$ be the vector $(X_1,\ldots,X_m)$ conditioned on the sum being $c_j$.

Clearly, such a column of i.i.d.~random variables takes $O(m)$ time to generate, and by the local central limit theorem, the probability of the sum being $c_j$ is $\Theta(p/\sqrt m)$. Thus, using \emph{hard} rejection sampling, the runtime is $O(c_j\sqrt m)$. Using PDC, we improve this runtime to $O(m)$ in Algorithm~\ref{alg:columns}.

\ignore{
\begin{algorithm}[H]
\caption{Uniform sampling of non-negative integer $m$-tuples with sum $c$}
\label{alg:columns}
\begin{algorithmic}[1]
\STATE \textbf{if} $m=1$, \textbf{return} $c$
\STATE let $k=\lfloor m/2\rfloor$, let $p=m/(m+c)$
\STATE generate $x_{k+1},\ldots,x_m$ i.i.d.\ from Geometric($p$)
\STATE let $z=c-x_{k+1}-\cdots-x_m$
\STATE \textbf{if} $z<0$, \textbf{restart} from Line 3
\STATE let $w=\big\lfloor\frac{(1-p)(k-1)}{p}\big\rfloor$, let $t=\frac{(z+k-1)!}{(w+k-1)!}\frac{w!}{z!}(1-p)^{z-w}$
\STATE \textbf{w.p.} $1-t$, \textbf{restart} from Line 3
\STATE generate $x_1,\ldots,x_k$ from tuples of $k$ non-negative integers with sum $z$ (recursively)
\STATE \textbf{return} $x_1,\ldots,x_m$
\end{algorithmic}
\end{algorithm}
}

}


\section{Proof of uniformity}
\label{sect:proofs}

\begin{lemma}\label{uniform:lemma}
Algorithm~\ref{mainalgorithm} produces a uniformly random $(r,c)$-contingency table.
\end{lemma}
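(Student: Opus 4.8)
The plan is to recognize Algorithm~\ref{mainalgorithm} as a doubly-nested application of probabilistic divide-and-conquer and to prove uniformity by induction on the magnitude $M$ of the largest row or column sum. First I would fix the starting representation: by Lemma~\ref{lemma:uniform}, taking $p_{ij} = m/(m+c_j) = 1-q_j$, which has the product form $1-\alpha_i\beta_j$ with $\alpha_i=1$ and $\beta_j=q_j$, the target uniform measure on $(r,c)$-contingency tables is exactly $\L(\X\mid E)$, where $\X=(X_{ij})$ are independent with $X_{ij}\sim\Geo(q_j)$ and $E=E_{r,c}$. This is the object the algorithm must reproduce.

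The key structural step is the bit decomposition $X_{ij}=\epsilon_{ij}+2X'_{ij}$, where $\epsilon_{ij}=\mathbbm 1(X_{ij}\text{ odd})$ and $X'_{ij}=(X_{ij}-\epsilon_{ij})/2$. By the geometric identity recalled in the introduction, $\epsilon_{ij}\sim\Bern(q_j/(1+q_j))$ is independent of $X'_{ij}\sim\Geo(q_j^2)$, and since the entries are independent the whole family $\epsilon=(\epsilon_{ij})$ is independent of $X'=(X'_{ij})$. I would then apply Theorem~\ref{thm:pdc} with $A=\epsilon$ and $B=X'$: one first samples $\epsilon$ from $\L(\epsilon\mid E)$ and then $X'$ from $\L\bigl(X'\mid(\epsilon,X')\in E\bigr)$, whence $\X=\epsilon+2X'$ is an exact draw from $\L(\X\mid E)$. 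Conditionally on $E$, the constraints force $\sum_i\epsilon_{ij}\equiv c_j$ and $\sum_j\epsilon_{ij}\equiv r_i\pmod 2$, and the residual $X'$ is precisely an $(r',c')$-contingency table with $r'_i=(r_i-\sum_j\epsilon_{ij})/2$ and $c'_j=(c_j-\sum_i\epsilon_{ij})/2$. Because $X'_{ij}\sim\Geo(q_j^2)$ again has product-form parameters, Lemma~\ref{lemma:uniform} shows that $\L(X'\mid\epsilon,E)$ is the uniform measure on $(r',c')$-contingency tables. This is the identical sampling problem with strictly smaller sums ($M'\le M/2<M$), handled by the inductive hypothesis; it is exactly the halving and accumulation performed at the end of each iteration and in Line~\ref{line:combine}. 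The base case is all sums zero, for which the only table is the zero matrix, and the row/column permutations $\sigma_R,\sigma_C$ are inverted before Line~\ref{line:combine}, so they do not affect the law.

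It remains to show that the inner loop samples $\L(\epsilon\mid E)$ exactly. I would peel off the bits in the algorithm's order by the chain rule, which is a repeated application of Theorem~\ref{thm:pdc}: it suffices that each step draws $\epsilon_{ij}$ from $\L(\epsilon_{ij}\mid E,\text{earlier bits})$. For the interior entries $1\le i\le m-1$, $1\le j\le n-1$, the Bernoulli test against $f(i,j,0,r,c)$ in Line~\ref{reject:main} does exactly this, since the derivation culminating in \eqref{f11} and \eqref{fij} identifies $f(i,j,k,r,c)$ with the two-point conditional $\P(\epsilon_{ij}=k\mid E,\text{earlier bits})$, the even-forcing counts $\Sigma(\cdot,\mathcal O_{i,j})$ encoding the already-decided bits, and infeasible choices carrying numerator $0$ so that non-completable assignments are never produced. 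The last entry of each of the first $n-1$ columns and the whole last column are then forced modulo $2$ (the lines $\epsilon_{m,j}\leftarrow c_j\bmod 2$ and $\epsilon_{i,n}\leftarrow r_i\bmod 2$); this is a deterministic-second-half instance of PDC, and consistency of the doubly-forced corner $\epsilon_{m,n}$ follows from $\sum_i r_i=\sum_j c_j$, which makes the row and column parity constraints globally compatible.

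The main obstacle is precisely this last verification: confirming that $f$, as defined in \eqref{rejection:ij}–\eqref{equation:mn}, really is the successive conditional law of each bit, in particular that the even-forced enumeration $\Sigma(\cdot,\mathcal O_{i,j})$ counts exactly the completions consistent with the bits already sampled, and that the corner parity is automatically consistent. Everything else reduces to bookkeeping of the two PDC steps together with the induction on $M$.
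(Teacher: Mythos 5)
Your proposal follows essentially the same route as the paper's proof: decompose each entry into its least significant bit plus twice a residual geometric variable, verify that $f(i,j,k,r,c)$ is the two-point conditional law of each bit given $E$ and the previously sampled bits (with the parity-forced entries handled as a deterministic second half), and then apply Theorem~\ref{thm:pdc} inductively across iterations to combine the bit layers in Line~\ref{line:combine}, the residual problem being again a uniform $(r',c')$-table by the product form of the $\Geo(q_j^2)$ parameters. The step you flag as the main obstacle --- checking that the $\Sigma(\cdot,\mathcal O_{i,j})$ ratios really are the successive conditionals --- is exactly the computation the paper carries out (for the representative cases $i=j=1$ and $i=m-1$), so your plan is correct and aligned with the published argument.
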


\begin{proof}
We need to show that the function $f$ in Section~\ref{int:tables} 
samples $\epsilon_{i,j}$ in its right proportion, namely,  
\[\L(\epsilon_{i,j} | E, (\epsilon_{\ell,1})_{\ell=1,\ldots,m},  \ldots, (\epsilon_{\ell,j-1})_{\ell=1,\ldots,m}, (\epsilon_{\ell,j})_{\ell=1,\ldots,j-1}),\]
i.e., that we are getting the correct distribution by filling in the bits one at a time.  

Let us start by considering the random sampling of the least significant bit, say $\epsilon_{1,1}$, of the top left entry of the table. 
Suppose we sample from 
\[ \L(\epsilon_{1,1}) = \L\left(\Bern\left(\frac{q_1}{1+q_1}\right)\right),\]
 and then reject according to the correct proportion in the conditional distribution. 
We accept this sample in proportion to $\P(E | \epsilon_{1,1})$, which is given by 
\[ \P(E | \epsilon_{1,1}=k) = \Sigma\left( \begin{array}{l}(r_{1}-k,\ldots), \\ (c_{1}-k,\ldots), \\ \mathcal{O}_{1,1}\end{array} \right) \cdot\, q_1^{c_1-k} (1-q_1^2)\, (1-q_1)^{m-1}\prod_{j=2}^n q_j^{c_j} \big(1- q_j\big)^m. \]
Normalizing by all terms which do not depend on $k$ gives 
\[ \P(E | \epsilon_{1,1}=k) \propto \Sigma\left( \begin{array}{l}(r_{1}-k,\ldots), \\ (c_{1}-k,\ldots), \\ \mathcal{O}_{1,1}\end{array} \right) \cdot\, q_1^{-k}, \]
and since $\P(\epsilon_{1,1} = k) \propto q_1^k$, Equation~\eqref{rejection:ij} follows for the case $i=j=1$.  The same reasoning applies for $1 \leq i \leq m-2$ and $1 \leq j \leq n-2$.

Next, consider the case when we sample from $\L(\epsilon_{m-1,j}) = \L\left(\Bern\left(\frac{q_j}{1+q_j}\right)\right)$, $j=1,2,\ldots,n-1$. 
We accept this sample in proportion to $\P(E | \epsilon_{1,1}, \ldots, \epsilon_{m-1,j})$.  Note, however, that given the least significant bits of the first $m-1$ entires of any given column, the bit $\epsilon_{m,j}$ is determined by the column sum. 
Let $k := \epsilon_{m-1,j}$ and $k' := \epsilon_{m,j}$ be such that $c_j - k - k'$ is even. Applying PDC deterministic second half, then implies the rejection function is proportional to 
\[ \P(E | \epsilon_{1,j},\ldots,\epsilon_{m-1,j}) \propto \Sigma\left( \begin{array}{l} (\ldots,r_{m-1}-k,r_m-k'), \\ (\ldots,c_{j}-k-k',\ldots), \\ \mathcal{O}_{m,j}\end{array}\right)\,\cdot\, q_j^{c_j-k-k'} (1-q_j^2)^{m-2}\, (1-q_j)^{2}. \]
Since $\P(\epsilon_{m-1,j} = k) \propto q_j^k$, Equation~\eqref{rejection:mj} follows.

The other cases are similar, and simply require keeping track of which other bits are determined under the conditioning. 
We have thus demonstrated the ability to exactly sample from the least significant bit of each entry of the table.

We now proceed by induction. 
Let $\epsilon_0$ denote the least significant bits of all entries in the table, with $\epsilon_i$, $i=1,2,\ldots$ denoting the $(i+1)$th least significant bits of the table. 
The first iteration of the algorithm generates an element from the distribution $\L(\epsilon_0 \,|\, E_{r,c})$, and updates the values of $r$ and $c$ to $r'$ and $c'$, respectively, using the values in $\epsilon_0$. 
The second iteration of the algorithm generates an element from the distribution $\L(\epsilon_0' \,|\, E_{r',c'})$, which is distributed as $\L(\epsilon_1 \,|\, E_{r,c}, \epsilon_0)$. 
By Theorem~\ref{thm:pdc}, the expression for $t$ in Line~\ref{line:combine} is distributed as $\L(\epsilon_0 + 2\epsilon_1 \,|\, E_{r,c}).$

Assume that after the first $b-1$ iterations of the algorithm, the expression for $t$ in Line~\ref{line:combine} is distributed as $\L(\epsilon_0 + 2\epsilon_1 + \ldots 2^{b-1} \epsilon_{b-1} \,|\, E_{r,c})$, and that $r'$ is the vector of remaining row sums, and $c'$ is the vector of remaining column sums given the first $b-1$ bits of the entries of the table. 
Then, the $b$-th iteration of the algorithm generates $\L(\epsilon_0' \,|\, E_{r',c'})$ which is distributed as $\L(\epsilon_b \,|\, E_{r,c}, \epsilon_0,\ldots,\epsilon_{b-1})$. 
By Theorem~\ref{thm:pdc}, the expression for $t$ in Line~\ref{line:combine} is thus distributed as ${\L(\epsilon_0 + 2\epsilon_1 + \ldots 2^{b} \epsilon_{b} \,|\, E_{r,c})}$. 

After at most $\lfloor \log_2(M)\rfloor+1$ iterations, where $M$ is the largest row sum or column sum, all row sums and column sums will be zero, at which point the algorithm terminates and returns the current table $t$. 
\end{proof}


\section{$2$ by $n$ tables}\label{sect:two}

In this section, $N$ denotes the sum of all entries in the table. 

Dyer and Greenhill~\cite{dyergreenhill} described a $O(n^2\log N)$ asymptotically uniform MCMC algorithm based on updating a $2\times2$ submatrix at each step. Kijima and Matsui~\cite{kitajimamatsui} adapted the same chain using coupling from the past to obtain an exactly uniform sampling algorithm at the cost of an increased run time of $O(n^3\log N)$. In this section, we will show that Algorithm~\ref{PDC DSH 2 by n}, which is also exactly uniform sampling, runs with an expected $O(1 / p_n)$ number of rejections, where $p_n$ is a certain probability that a sum of independent uniform random variables lie in a certain region. 

\ignore{
When there are only two rows, by Lemma~\ref{lemma:uniform}, the distribution of $x_{1j}$ given $x_{1j}+x_{2j}=c_j$ is uniform on $\{0,\ldots,c_j\}$, so we can simplify the algorithm by avoiding the geometric distribution altogether. This yields the following algorithm.

\begin{algorithm}[H]
\caption{generating a uniformly random $2 \times n$ $(r,c)$-contingency table.}
\label{PDC DSH 2 by n}
\begin{algorithmic}[1]
\FOR {$j=1,\ldots,n-1$}
\STATE choose $x_{1j}$ uniformly from $\{0,\ldots,c_j\}$
\STATE let $x_{2j} = c_j - x_{1j}$
\ENDFOR
\STATE let $x_{1n} = r_1 - \sum_{j=1}^{n-1} x_{1j}$\\[.03cm] 
\STATE let $x_{2n} = r_2 - \sum_{j=1}^{n-1} x_{2j}$
\IF {$x_{1n}<0$ or $x_{2n}<0$}
  \STATE restart from Line 1
\ENDIF
\RETURN $x$
\end{algorithmic}
\end{algorithm}

Pictorially, the table looks like the following.  

\[
\begin{array}{|c|c|c|c|c|c|}
\hline
X_{1,1} &X_{1,2}  &X_{1,3} &\cdots &X_{1,n} & r_1 \\
 \hline
X_{2,1} &X_{2,2} &X_{2,3} & \cdots &X_{2,n} & r_2 \\
 \hline
c_1 & c_2 & c_3 & \cdots & c_n &  \\
\hline
\end{array}
\]

According to Algorithm~\ref{PDC DSH 2 by n}, we sample entries in the top row \emph{except $X_{1,n}$}, one at a time, uniformly between 0 and the corresponding column sum $c_j$. The rest of the table is then determined by these entries and the prescribed sums; as long as all entries produced in this way are non-negative, we accept the result to produce a uniformly random $(r,c)$-contingency table.

The most general theorem, which applies for all possible row and column sums, is below, and afterwards we state several more practical corollaries.
In what follows, we assume $U_1, U_2, \ldots, U_{n-1}$ denote uniform random variables, with $U_i$ uniform over the set of integers $\{0, 1, \ldots, c_i\}$, $i=1,2,\ldots, n-1$, and we define
\[ p_n := \P\big(U_1 + \ldots + U_{n-1} \in [r_1-c_n, r_1]\big). \]

\begin{theorem}
Algorithm~\ref{PDC DSH 2 by n} produces a uniformly random $2\times n$ $(r,c)$-contingency table. 
The expected runtime of Algorithm~\ref{PDC DSH 2 by n} is $O(n/p_n)$. 
\end{theorem}
}

\begin{proof}[Proof of Theorem~\ref{two:row:theorem}]
Let $\xi$ be a $2\times n$ $(r,c)$-contingency table, and let $\xi'=(\xi_{11},\xi_{12},\ldots,\xi_{1,n-1})$; that is, the top row without the last entry. Since $r$ and $c$ are fixed, there is a bijective relationship between $\xi$ and $\xi'$; each determines the other. Then,
\[
\P\big[x=\xi\big]=\P\big[x_{11}=\xi_{11},x_{12}=\xi_{12},\ldots,x_{1,n-1}=\xi_{1,n-1}\big]=\frac1{(c_1+1)(c_2+1)\cdots(c_{n-1}+1)}.
\]
This does not depend on $\xi$, so $x$ is uniform restricted to $(r,c)$-contingency tabes.

The first row of entries $x_{11}$, \ldots, $x_{1,n-1}$ 
is accepted if and only if $x_{1n}=r_1-x_{11}-\cdots-x_{1,n-1}$ lies between $0$ and $c_n$, which occurs with probability $p_n =  \P(U_1 + \ldots + U_{n-1} \in [r_1-c_n, r_1])$.
\end{proof}

\begin{theorem}\label{two:row:theorem:real}
Let $U_1, U_2, \ldots, U_{n-1}$ denote independent uniform random variables, with $U_j$ uniform over the continuous interval $[0,c_j]$, $j=1,\ldots, n-1$, and define
\[ p_n := \P\big(U_1 + \ldots + U_{n-1} \in [r_1-c_n, r_1]\big). \]
Algorithm~\ref{PDC DSH 2 by n real} produces a uniformly random $2\times n$ real--valued $(r,c)$-contingency table, with expected number of rejections $O(1/p_n)$. 
\end{theorem}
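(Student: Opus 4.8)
The plan is to mirror the proof of Theorem~\ref{two:row:theorem}, replacing probability mass functions by densities throughout. As before, fix $r=(r_1,r_2)$ and $c=(c_1,\ldots,c_n)$ and observe that a real--valued $2\times n$ $(r,c)$-contingency table $\xi$ is completely determined by the truncated top row $\xi'=(\xi_{1,1},\ldots,\xi_{1,n-1})$: the entries $\xi_{2,j}=c_j-\xi_{1,j}$ are fixed by the column sums, while $\xi_{1,n}=r_1-\sum_{j<n}\xi_{1,j}$ and $\xi_{2,n}=r_2-\sum_{j<n}\xi_{2,j}$ are fixed by the row sums. Since $r_1+r_2=\sum_j c_j$, one automatically has $\xi_{1,n}+\xi_{2,n}=c_n$, so the map $\xi'\mapsto\xi$ is an affine bijection from a polytope in $\R^{n-1}$ onto $E_{r,c}$ with constant Jacobian; hence a law that is uniform in the coordinates $\xi'$ pushes forward to the uniform (Lebesgue-on-the-affine-subspace) measure on $E_{r,c}$.

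First I would identify the distribution produced before rejection. The entries $\xi_{1,1},\ldots,\xi_{1,n-1}$ are drawn independently and uniformly from $[0,c_1],\ldots,[0,c_{n-1}]$, so $\xi'$ has the constant density $\prod_{j=1}^{n-1}c_j^{-1}$ on the box $\prod_{j=1}^{n-1}[0,c_j]$. By the preceding paragraph this is exactly the uniform law on the tables whose first row lies in that box, so restricting to accepted samples and renormalizing yields the uniform measure on $E_{r,c}$, as desired.

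Next I would pin down the acceptance event. A sample is kept precisely when $\xi_{1,n}\ge 0$ and $\xi_{2,n}\ge 0$; using $\xi_{1,n}+\xi_{2,n}=c_n$ these two inequalities are together equivalent to $0\le\xi_{1,n}\le c_n$, i.e. to $\sum_{j=1}^{n-1}\xi_{1,j}\in[r_1-c_n,\,r_1]$. Writing $U_j$ for a uniform variable on $[0,c_j]$, the acceptance probability is exactly $p_n=\P(U_1+\cdots+U_{n-1}\in[r_1-c_n,r_1])$, so the number of restarts is geometric with success probability $p_n$ and the expected number of rejections is $O(1/p_n)$.

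The one genuinely nontrivial point, and where I would be most careful, is the interpretation of the uniform measure on $E_{r,c}$: since $E_{r,c}$ has Lebesgue measure zero in $\R^{2n}$, ``uniform'' must mean Lebesgue measure on the $(n-1)$-dimensional affine subspace cut out by the constraints, and one must check that the constant-density-in-$\xi'$ argument is compatible with that interpretation. This is handled either by the constant-Jacobian observation above, or, in the spirit of the rest of the paper, by invoking Lemma~\ref{lemma:uniform:real} together with the fact (noted just before the algorithm) that for independent exponentials $\xi_{1,j},\xi_{2,j}$ the conditional law of $\xi_{1,j}$ given $\xi_{1,j}+\xi_{2,j}=c_j$ is uniform on $[0,c_j]$, and then applying the probability-zero PDC result, Theorem~\ref{thm:pdcdsh}, with the truncated top row as the first half and the remaining determined entries as the deterministic second half.
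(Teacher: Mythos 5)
Your proposal is correct and follows essentially the route the paper intends: the paper omits an explicit proof of Theorem~\ref{two:row:theorem:real}, remarking only that Algorithm~\ref{PDC DSH 2 by n real} is ``essentially the same'' as Algorithm~\ref{PDC DSH 2 by n}, whose proof is exactly your bijection-with-the-truncated-top-row argument with probability mass functions in place of densities. Your added care about what ``uniform'' means on the measure-zero set $E_{r,c}$ (constant Jacobian, or Lemma~\ref{lemma:uniform:real} plus Theorem~\ref{thm:pdcdsh}) is a genuine improvement in rigor over what the paper supplies, not a deviation from its approach.
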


\begin{corollary}\label{equal density 2 by n}
When the row sums are equal and the column sums are equal, the expected number of rejections before Algorithm~\ref{PDC DSH 2 by n real} terminates is $O(n^{1/2})$.  
\end{corollary}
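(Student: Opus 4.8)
The plan is to invoke Theorem~\ref{two:row:theorem:real}, which already reduces the expected number of rejections to $O(1/p_n)$; it therefore suffices to show that under the equal-sums hypothesis one has $p_n = \Omega(n^{-1/2})$. Write $\gamma$ for the common column sum, so that $N = n\gamma$ and, since the two row sums are equal, $r_1 = r_2 = n\gamma/2$, while $c_n = \gamma$. The variables $U_1,\dots,U_{n-1}$ are then i.i.d.\ uniform on $[0,\gamma]$, and their sum $S := U_1+\dots+U_{n-1}$ has mean $\mu = (n-1)\gamma/2$ and variance $\sigma^2 = (n-1)\gamma^2/12$, so $\sigma = \Theta(\gamma\sqrt{n})$.

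The first fact I would record is that the target interval is centered exactly at the mean of $S$: the interval $[r_1 - c_n, r_1] = [n\gamma/2 - \gamma,\, n\gamma/2]$ has midpoint $(n-1)\gamma/2 = \mu$ and length $\gamma$, so $p_n = \P\big(S \in [\mu - \tfrac\gamma2,\, \mu + \tfrac\gamma2]\big)$. This is the most favorable placement of an interval of fixed width $\gamma$, and it is precisely this centering that makes the cost grow only like $\sqrt n$.

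Next I would estimate this probability via a local central limit theorem. Normalizing, $p_n = \int_{-\gamma/(2\sigma)}^{\gamma/(2\sigma)} g_n(z)\,dz$, where $g_n$ is the density of $(S-\mu)/\sigma$. The window half-width $\gamma/(2\sigma) = \Theta(n^{-1/2}) \to 0$, so if $g_n$ is uniformly close to the standard normal density $\phi$, then $g_n(z) \ge \tfrac12\phi(0) = (2\sqrt{2\pi})^{-1}$ on the entire window for $n$ large; hence $p_n \ge (2\sqrt{2\pi})^{-1}\cdot \gamma/\sigma = \Omega(n^{-1/2})$, giving $1/p_n = O(\sqrt n)$ as claimed.

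The main obstacle is justifying the uniform closeness of $g_n$ to $\phi$, i.e.\ a local limit theorem at the level of densities. The clean route is Fourier-analytic: the characteristic function of uniform$[0,\gamma]$ decays like $|t|^{-1}$, so that of $S$ decays like $|t|^{-(n-1)}$, which is integrable once $n \ge 3$; Fourier inversion then shows $S$ has a bounded continuous density, and the standard local CLT for i.i.d.\ summands with integrable characteristic function yields $\sup_z |g_n(z) - \phi(z)| \to 0$. An elementary alternative is to observe that $S$ is a scaled Irwin--Hall variable, whose density is symmetric and unimodal about $\mu$ with peak value $\Theta(1/\sigma)$, and to lower-bound the density on the shrinking window directly. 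Either way the remaining estimates are routine, and the finitely many small cases $n \le 2$ are checked by hand.
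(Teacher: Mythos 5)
Your proposal is correct and takes essentially the same route as the paper: both reduce to estimating $p_n$ by normalizing the sum of uniforms, observing that under equal row and column sums the acceptance window $[r_1-c_n,\,r_1]$ is centered at the mean with normalized width $c_n/\sigma=\Theta(n^{-1/2})$, and applying a Gaussian approximation. The paper's own proof is a one-line computation of this normalized width followed by an appeal to its CLT corollary, so your added local-CLT (or Irwin--Hall) justification is a genuine improvement in rigor --- a plain CLT does not by itself control a window whose width shrinks like $n^{-1/2}$ --- and you also correctly orient the estimate as a lower bound $p_n=\Omega(n^{-1/2})$, which is what the conclusion $1/p_n=O(\sqrt{n})$ actually requires (the paper writes $p_n=O(1/\sqrt{n})$, the wrong direction).
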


We observe that the algorithm runs quickly when $r_1\approx \E[U_1+\cdots+U_n]=N/2$, i.e., the two row sums are similar in size, and also when $c_n$ is large. Since we can arbitrarily reorder the columns and choose $c_n$ to be the largest column sum, it follows that having a skewed distribution of column sums and an even distribution of row sums is advantageous to runtime.

Denote by $\Phi(\cdot)$ the cumulative distribution function of the standard normal distribution.

\begin{corollary}
Suppose $U_1 + \ldots + U_{n-1}$ satisfies the central limit theorem.
Assume there exists some $t\in\R$ such that, as $n\rightarrow\infty$, we have 
\[ \frac{r_1 - c_n - \frac{c_1 + \ldots + c_{n-1}}{2}}{\sqrt{\frac{c_1^2+\ldots c_{n-1}^2}{12}}} \to  t.
\]
Let $c_n' = c_n / \sqrt{\frac{c_1^2+\ldots c_{n-1}^2}{12}}$.  
Then, asymptotically as $n\rightarrow\infty$, 
the expected number of rejections before Algorithm~\ref{PDC DSH 2 by n} terminates is $O(\Phi(t+c_n') - \Phi(t))$.
\end{corollary}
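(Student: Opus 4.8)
The plan is to reduce the corollary to the asymptotic evaluation of the single probability $p_n$ appearing in Theorem~\ref{two:row:theorem}, and then read off the answer from the central limit theorem. By Theorem~\ref{two:row:theorem} the expected number of rejections is $O(1/p_n)$ with $p_n = \P(S_n \in [r_1 - c_n, r_1])$, where $S_n := U_1 + \cdots + U_{n-1}$ and $U_j$ is uniform on $\{0,1,\ldots,c_j\}$. Thus the entire task is to show $p_n \to \Phi(t + c_n') - \Phi(t)$, after which the stated order follows by feeding this into the $O(1/p_n)$ bound. First I would record the first two moments of $S_n$: since $\E[U_j] = c_j/2$ and $\Var(U_j) = \frac{(c_j+1)^2 - 1}{12} = \frac{c_j^2 + 2c_j}{12}$, independence gives $\mu_n := \E[S_n] = \frac12\sum_{j=1}^{n-1} c_j$ and $\sigma_n^2 := \Var(S_n) = \frac{1}{12}\sum_{j=1}^{n-1}(c_j^2 + 2c_j)$. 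Writing $s_n^2 := \frac{1}{12}\sum_{j=1}^{n-1} c_j^2$ for the normalization used in the statement, one has $\sigma_n^2 = s_n^2(1 + o(1))$, hence $\sigma_n/s_n \to 1$, precisely when $\sum_j c_j = o(\sum_j c_j^2)$, a condition forced by the regime in which the central limit theorem with this scaling can hold.

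Next I would standardize. Setting $Z_n := (S_n - \mu_n)/\sigma_n$, I rewrite $p_n = \P(a_n \le Z_n \le b_n)$ with $a_n = (r_1 - c_n - \mu_n)/\sigma_n$ and $b_n = (r_1 - \mu_n)/\sigma_n = a_n + c_n/\sigma_n$. The hypothesis states $(r_1 - c_n - \mu_n)/s_n \to t$; combined with $\sigma_n/s_n \to 1$ this gives $a_n \to t$, and since $c_n/\sigma_n = c_n'\,(s_n/\sigma_n)$ one gets $b_n = a_n + c_n'(1 + o(1))$, that is, $b_n - (t + c_n') \to 0$ when $c_n'$ stays bounded, and $b_n \to \infty$ together with $t + c_n' \to \infty$ otherwise. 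By the assumed central limit theorem $Z_n \Rightarrow N(0,1)$; since the limit law has the continuous distribution function $\Phi$, P\'olya's theorem upgrades weak convergence to uniform convergence of the distribution functions of $Z_n$ to $\Phi$. Therefore $p_n = \Phi(b_n) - \Phi(a_n) + o(1)$, and the uniform continuity of $\Phi$ together with $a_n \to t$ and $b_n - (t+c_n') \to 0$ yields $p_n = \Phi(t + c_n') - \Phi(t) + o(1)$. Since the expected number of rejections is $O(1/p_n)$ by Theorem~\ref{two:row:theorem}, this produces the asserted asymptotic order in terms of $\Phi(t+c_n') - \Phi(t)$.

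The step I expect to be the genuine obstacle is the treatment of the upper endpoint $b_n$, because $c_n'$ is itself an $n$-dependent quantity that need not converge, so a naive application of ``weak convergence plus a fixed continuity point'' is unavailable there. The clean remedy is P\'olya's theorem, which makes the identity $\P(a_n \le Z_n \le b_n) = \Phi(b_n) - \Phi(a_n) + o(1)$ valid for \emph{moving} endpoints; only the uniform continuity of $\Phi$ is then required to pass to $\Phi(t+c_n')$. A secondary point to verify carefully is the passage $\sigma_n \sim s_n$, i.e.\ that the $\frac{1}{6}\sum_j c_j$ correction to the variance is negligible against $\frac{1}{12}\sum_j c_j^2$; this is exactly the implicit requirement that the column sums grow fast enough for the central limit theorem to hold with the stated normalization, and I would state it explicitly as the hypothesis under which the conclusion is valid.
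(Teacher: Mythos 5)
Your proposal is correct and takes essentially the same route as the paper: the paper's proof is the one-line assertion that $\P(U_1+\cdots+U_{n-1}\in[r_1-c_n,r_1]) \sim \P(Z\in[t,t+c_n'])$ by the central limit theorem, combined with the $O(1/p_n)$ bound of Theorem~\ref{two:row:theorem}. You merely supply the details the paper leaves implicit --- the moment computation, the $\sigma_n\sim s_n$ step, and the P\'olya-theorem justification for the moving endpoint $b_n$ --- all of which are sound.
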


\begin{proof}
Letting $Z$ denote a standard normal random variable, we have
\[ \P(U_1 + \ldots + U_{n-1} \in [r_1-c_n,r_1]) \sim \P(Z \in [t, t+c_n']).\qedhere \]
\end{proof}

\begin{corollary}\label{large c1}
Suppose $U_1 + \ldots + U_{n-1}$ satisfies the central limit theorem.
Suppose $c_n' \to \lambda \in (0,\infty].$ 
 Then 
 the expected number of rejections before Algorithm~\ref{PDC DSH 2 by n} terminates is $O(1)$.
\end{corollary}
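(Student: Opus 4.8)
The plan is to reduce everything to a lower bound on the acceptance probability $p_n$ and then read that bound off the normal approximation already established in the preceding corollary. By Theorem~\ref{two:row:theorem}, the expected number of rejections is $O(1/p_n)$, so it suffices to show that $p_n$ stays bounded away from $0$ as $n\to\infty$; the conclusion $O(1)$ is then immediate.

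First I would recall the content of the preceding corollary, under whose standing hypotheses this one is to be read. Writing $t_n := (r_1 - c_n - \tfrac{c_1+\cdots+c_{n-1}}{2})/\sqrt{(c_1^2+\cdots+c_{n-1}^2)/12}$, the interval $[r_1-c_n, r_1]$ standardizes to $[t_n, t_n + c_n']$, since its endpoints differ by exactly $c_n/\sqrt{(c_1^2+\cdots+c_{n-1}^2)/12} = c_n'$. Thus, assuming $t_n \to t \in \R$ and that $U_1+\cdots+U_{n-1}$ satisfies the central limit theorem, the proof of that corollary gives $p_n = \P\big(U_1+\cdots+U_{n-1}\in[r_1-c_n,r_1]\big) \sim \Phi(t+c_n') - \Phi(t)$. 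The present corollary adds the hypothesis $c_n' \to \lambda \in (0,\infty]$.

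Next I would take the limit. Since $\Phi$ is continuous and increasing with $\Phi(+\infty)=1$, we have $\Phi(t+c_n') - \Phi(t) \to \Phi(t+\lambda) - \Phi(t)$, interpreted as $1-\Phi(t)$ when $\lambda=\infty$. Because $\lambda>0$ and $t$ is finite, this limit is a strictly positive constant $\delta$. Hence $p_n \to \delta > 0$, so $\liminf_n p_n > 0$ and $1/p_n = O(1)$; combined with the $O(1/p_n)$ bound from Theorem~\ref{two:row:theorem}, this yields the claim.

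The main obstacle I anticipate is not the limit computation but confirming that the asymptotic equivalence is strong enough for the present purpose: a priori it only asserts $p_n/(\Phi(t+c_n')-\Phi(t)) \to 1$, and one must check that the denominator is itself bounded away from $0$ (which holds by the positivity of $\delta$) so that the ratio statement upgrades to a genuine positive lower bound on $p_n$. The case $\lambda=\infty$ deserves a separate word, since there $c_n'$ is unbounded and one should verify that the normal approximation applies uniformly over the growing interval $[t_n, t_n+c_n']$; invoking the integral (Berry--Esseen-type) form of the central limit theorem already used in the preceding corollary handles this, as the tail $1-\Phi(t)$ is approached from below and remains bounded away from zero for finite $t$.
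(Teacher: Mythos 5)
Your proposal is correct and follows the same route the paper intends: Corollary~\ref{large c1} is read as an immediate consequence of the preceding corollary's normal approximation $p_n \sim \Phi(t+c_n')-\Phi(t)$ together with the $O(1/p_n)$ bound of Theorem~\ref{two:row:theorem}, with the limit $\Phi(t+\lambda)-\Phi(t)>0$ (interpreted as $1-\Phi(t)$ when $\lambda=\infty$) giving a positive lower bound on $p_n$. Your added care about upgrading the asymptotic equivalence to a genuine lower bound, and about the implicit standing hypothesis $t_n\to t\in\R$, is warranted but does not change the argument.
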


Corollary~\ref{large c1} says that when the square of the largest column sum dominates the sum of squares of the remaining column sums, then the majority of the uncertainty is in column~1, which is handled optimally by PDC.


\begin{proof}[Proof of Corollary~\ref{equal density 2 by n}]
In this case, we have $c_1' = c / \sqrt{c^2(n-1)/12} = O(1/\sqrt{n})$, hence we have the acceptance probability $p_n = O(1/\sqrt{n})$.  
\end{proof}

The following table demonstrates that under the conditions of Corollary~\ref{equal density 2 by n}, i.e., equal row sums and columns sums, the expected number of rejections grows like $O(\sqrt{n})$, and the expected runtime grows like $O(n^{3/2} \log N)$.

\begin{table}[H]
\centering
\begin{tabular}{|c|c|c|c|c|}
\hline
Rows & Columns & Density & Rejections & Runtime \\
\hline
2 & 2 & 5 & 0$^\ast$ & 269ns$^\ast$ \\
2 & 10 & 5 & 1.32$^\ast$ & 1.13\textmu s$^\ast$ \\
2 & 100 & 5 & 6.22$^\ast$ & 23.0\textmu s$^\ast$ \\
2 & 1000 & 5 & 21.6$^\dagger$ & 665\textmu s$^\dagger$ \\
2 & $10^4$ & 5 & 69.3$^\dagger$ & 19.7ms$^\dagger$ \\
2 & $10^5$ & 5 & 199$^\dagger$ & 580ms$^\dagger$ \\
2 & $10^6$ & 5 & 755$^\ddagger$ & 23.7s$^\ddagger$ \\

\hline

\end{tabular}

\caption{Simluated runtime under Algorithm 4 for sampling contingency tables with homogeneous row and column sums, compared to optimised rejection sampling where columns are picked using a discrete uniform random variable. The symbols $\ast$, $\dagger$ and $\ddagger$ denote averages over a sample of size $10^6$, 1000 and 1 respectively.
As predicted analytically, the number of rejections grows as $O(\sqrt{n})$ while the runtime grows as $O(n^{3/2} \log N)$.}

\end{table}


\section{Other Tables}
\label{sect:other}

\ignore{
\subsection{Binary Tables}
\label{sect:binary}

A special class of contingency tables are those for which the entries are restricted to be 0 or 1, see for example  \cite{barvinok2010number, bezakova,blanchet2009efficient, blanchet2013characterizing,brualdi1980matrices,CanfieldMcKay,krebs1992markov}.
The approach of Algorithm~\ref{mainalgorithm} applies directly, with geometric random variables replaced with Bernoulli random variables.  
In this case, given any $\xi$\in \{0,1\}^{m\times n}$ which satisfies given row sums $r$ and column sums $c$, we have
\[\P\big(\X=\xi\big)
=\prod_{i,j}\P\big(X_{ij}=\xi_{ij}\big)
=\prod_{i,j}(\alpha_i \beta_j)^{\xi_{ij}}(1-\alpha_i \beta_j)
=\prod_i\alpha_i^{r_i} \prod_j\beta_j^{c_j}\prod_{i,j}\big(1-\alpha_i \beta_j\big).
\]
In this case, the rejection probabilities are given by sums of Bernoulli random variables, or equivalently, geometric random variables conditional on values in $\{0,1\}$; 
this is sometimes referred to as the Poisson binomial distribution.  Let $B(q_j)$, $j=1,\ldots,n$, denote independent Bernoulli random variables with success probability $1-q_j$.  
Then 
\begin{align}
b(n,{\bf q},k) & := \P\left( \sum_{j=1}^n B(q_j) = k\right) \nonumber \\
		    & = \frac{1}{n+1}\sum_{\ell = 0}^n C^{-\ell k}\prod_{j=1}^n \left(1+(C^\ell-1)(1-q_j)\right),  \label{g:binary} 
\end{align}
where $C = \exp\left(\frac{2\pi i}{n+1}\right)$ is a $(n+1)$th root of unity.  
Define 
\[ d_{n,k} := \frac{\max\left(b(n,{\bf q}, {\bf c}, k), b(n,{\bf q}, {\bf c}, k-1)\right)}{b(n-1,{\bf q}, {\bf c}, k))}, \]
and $d := \max_{n,k} d_{n,k}$. 
The expression in~\eqref{g:binary} is a numerically stable way to evaluate the convolution of a collection of independent Bernoulli random variables using a fast Fourier transform, see for example~\cite{PoissonBinomial}. 

In addition, rather than sampling entries in a given column independently, we instead start with a vector $(1,1,\ldots, 0,0)$ containing exactly $c_j$ 1s and $m-c_j$ 0s, and apply a weighted permutation, in proportion to the row sums, for example using the Permutation P routine in~\cite{Knuth}. 
This circumvents the column rejection step altogether and instead proceeds directly to the row rejection step. 

We summarize this procedure in Algorithm~\ref{binary algorithm}.

\begin{algorithm}[H]
\caption{PDC DSH generation of a uniformly random $(r,c)$-binary contingency table.}
\label{binary algorithm}
\begin{algorithmic}[1]
\FOR {$j=2,\ldots,n$}
  \STATE Let $C_j\leftarrow (1,1,\ldots,0,\ldots,0)$ be the vector with $c_j$ initial 1s and $m-c_j$ initial 0s.
  \STATE Apply Permutation P \cite{Knuth} to $C_j$ with weights $r_1, \ldots, r_m$. 
   \IF{ $U > \prod_{i=1}^m \frac{b(i,{\bf q},r_i-\epsilon_{i,j})}{\max_{\ell\in \{0,1\}} b(i,{\bf q},r_i - \ell)}$}\label{binary:rejection}
   \STATE Goto Line 2.
   \ELSE
\STATE $r_i \leftarrow r_i - \epsilon_{i,j}$ for $i=1,2,\ldots,m$. 
\ENDIF
\ENDFOR
\STATE Let $C_1 = (r_1, \ldots, r_m)$
\STATE \textbf{return} $(C_1, C_2, \ldots, C_n)$
\end{algorithmic}
\end{algorithm}

\begin{theorem}
Algorithm~\ref{binary algorithm} produces a uniformly random $(r,c)$-binary contingency table.
\end{theorem}

\begin{theorem}
The cost of Algorithm~\ref{binary algorithm} is $O(n\, m\, d^m\, t_0)$, where $t_0$ is the cost of evaluating Line~\ref{binary:rejection}. 
\end{theorem}
\begin{proof}
The cost in terms of the expected number of random bits generated is $n$ times the total expected number of rejections times the cost to generate a random binary vector with a given number of 1s. 
By Lemma~\ref{binary:entropy}, this cost is at most $O(n\, m\, d^m)$. 
The total cost is then $O(n\, m\, d^m)$ times the cost of evaluating Equation~\eqref{g:binary} to the necessary precision at each iteration. 
\end{proof}

\subsection{Other Tables}
\label{subsect:other}
}

One can more generally sample from a table having independent entries with marginal distributions $\L(X_{1,1})$, $\L(X_{1,2})$, $\ldots$, $\L(X_{m,n})$, i.e., 
\begin{equation}\label{eq:general}
 \L(X) = \L( (X_{i,j})_{1\leq i \leq m, 1 \leq j \leq n} | E).
 \end{equation}
If the rejection probabilities can be computed, then we can apply a variation of Algorithm~\ref{mainalgorithm}, and possibly also a variation of Algorithm~\ref{mainalgorithm real}. 

It is sometimes possible to circumvent the column-rejection probabilities. 
We now state Algorithm~\ref{alg:iid general}, which is a general procedure of independent interest that samples from a conditional distribution of the form \[\L\left((X_1, X_2, \ldots, X_n) \middle| \sum_{i=1}^n X_i = k\right);\] see Lemma~\ref{PDC sums} for the explicit form of the rejection probability $t(a)$.  

\begin{algorithm}[H]
\caption{Random generation from $\L((X_1, X_2, \ldots, X_n) | \sum_{i=1}^n X_i = k)$}
\label{alg:iid general}
\begin{algorithmic}[1]
\STATE \textbf{Assume:} $\L(X_1), \L(X_2), \ldots, \L(X_n)$ are independent and either all discrete with $\P(\sum_{i=1}^n X_i = k)>0,$
 or $\L(\sum_{i=1}^n X_i)$ has a bounded density with $f_{\sum_{i=1}^n X_i}(k)>0$. 
\IF {$n = 1$}
\RETURN $k$
\ENDIF
\STATE let $r$ be any value in $\{1,\ldots, n\}$.  
\FOR {$i=1,\ldots, r$}
    \STATE generate $X_{i}$ from $\L(X_i)$.  
\ENDFOR
\STATE let $a \equiv (X_1, \ldots, X_r)$
\STATE let $s \equiv	 \sum_{i=1}^r X_i$.  
  \IF {$U \geq t(a)$  \ (See Lemma~\ref{PDC sums})} \label{rejection:step} 
    \STATE \textbf{restart}
  \ELSE
    \STATE Recursively call Algorithm~\ref{alg:iid general} on $\mathcal{L}(X_{r+1}),\ldots,\mathcal{L}(X_n)$, with target sum $k-s$, and set $(X_{r+1}, \ldots, X_n)$ equal to the return value.
    \RETURN $(X_1, \ldots, X_{n})$.\label{return:value}
  \ENDIF
\end{algorithmic}
\end{algorithm}

\begin{lemma}\label{PDC sums}
Suppose for each $a, b, \in \{1,\ldots,n\}$, $a<b$, either
\begin{enumerate}
\item $\L\left(\sum_{i=a}^b X_i\right)$ is discrete and
\begin{equation}\label{t discrete}
 t(a) = \frac{ \P\left(\sum_{i=r+1}^n X_i = k - \sum_{i=1}^r X_i | X_1, \ldots, X_r \right) }{\max_\eta \P(\sum_{i=r+1}^n X_i = \eta) }; 
 \end{equation}
or
\item $\L\left(\sum_{i=a}^b X_i\right)$ has a bounded density, denoted by $f_{a,b}$, and
\begin{equation}\label{t continuous}
 t(a) = \frac{ f_{r+1,n}\left(k - \sum_{i=1}^r X_i | X_1, \ldots, X_r \right) }{\max_\eta f_{r+1,n}(\eta) }. 
 \end{equation}
\end{enumerate}
Then Algorithm~\ref{alg:iid general} samples from $\L((X_1, X_2, \ldots, X_n) | \sum_{i=1}^n X_i = k).$
\end{lemma}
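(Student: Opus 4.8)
The plan is to prove this by induction on $n$, invoking the probabilistic divide-and-conquer machinery of Section~\ref{sect:PDC} at each recursive step. The base case $n=1$ is immediate: conditional on $\sum_{i=1}^1 X_i = k$ the single coordinate equals $k$ almost surely, and the algorithm correctly returns $k$ without any rejection.

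For the inductive step with $n \geq 2$ and a chosen split index $r$ (with $1\le r\le n-1$), I would set $A := (X_1, \ldots, X_r)$ and $B := (X_{r+1}, \ldots, X_n)$, and take the target event to be $E := \{\sum_{i=1}^n X_i = k\}$. With this identification the section $\cY_a = \{b : (a,b) \in E\}$ consists precisely of those $(x_{r+1}, \ldots, x_n)$ with $\sum_{i=r+1}^n x_i = k - s$, where $s = \sum_{i=1}^r X_i$. Hence in the discrete case $B(\cY_a) = \P(\sum_{i=r+1}^n X_i = k - s)$, and in the continuous case $B(\cY_a)$ is the density $f_{r+1,n}(k-s)$; in both cases, because $X_1,\ldots,X_r$ are independent of $X_{r+1},\ldots,X_n$, the supremum $\sup_a B(\cY_a)$ equals $\max_\eta \P(\sum_{i=r+1}^n X_i = \eta)$, respectively $\max_\eta f_{r+1,n}(\eta)$. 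Taking $C$ to be this supremum, Lemma~\ref{pdc:rejection} shows that the accept/reject step of Line~\ref{rejection:step}, which accepts with probability $t(a) = B(\cY_a)/C$ exactly as in \eqref{t discrete}, respectively \eqref{t continuous}, produces a sample from $\L(A') = \L(A \mid (A,B)\in E)$.

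The recursive call then generates, by the inductive hypothesis applied to the $n-r$ variables $X_{r+1}, \ldots, X_n$ with target sum $k - s$, a sample from $\L(B \mid \sum_{i=r+1}^n X_i = k-s) = \L(B \mid (a,B)\in E)$; the hypothesis that every partial sum $\sum_{i=a}^b X_i$ is either discrete or has a bounded density guarantees that this subproblem again satisfies the standing assumptions, so the induction is well-founded. Finally I would invoke Theorem~\ref{thm:pdc} in the discrete case and Theorem~\ref{thm:pdcdsh} in the continuous case to conclude that the assembled pair $(A', B'_{A'})$ has the law $\L((A,B)\mid E) = \L((X_1, \ldots, X_n) \mid \sum_{i=1}^n X_i = k)$, as required.

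The main obstacle is the continuous case, where $\P(E) = 0$ and the elementary PDC theorem does not apply. There I would verify the hypotheses of Theorem~\ref{thm:pdcdsh}: taking $T := \sum_{i=1}^n X_i$, which has a bounded density with $f_T(k) > 0$ by assumption, and $T_a := \sum_{i=r+1}^n X_i$ for each $a$, which likewise has a bounded density, the events $E = \{T = k\}$ and $\cY_a = \{T_a = k - s\}$ have exactly the form demanded, and the bounded-density conditions ensure both that $C < \infty$ and that the conditional laws are well-defined, circumventing the Borel paradox. Care is also needed to match each displayed formula for $t(a)$ against the correct normalizing constant $C = \sup_a B(\cY_a)$; confirming that this supremum coincides with the $\max_\eta$ appearing in \eqref{t discrete}--\eqref{t continuous} is the one small computation that makes the rejection probability in Line~\ref{rejection:step} agree with the claimed form.
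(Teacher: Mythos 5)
Your proposal is correct and follows essentially the same route as the paper's own proof: induction on $n$ with the split $A=(X_1,\ldots,X_r)$, $B=(X_{r+1},\ldots,X_n)$, acceptance of the first block via Lemma~\ref{pdc:rejection} with $C=\max_\eta$ of the relevant mass function or density, recursion for the second block, and the PDC lemmas to glue the two samples together. If anything, you are slightly more explicit than the paper in verifying the hypotheses of Theorem~\ref{thm:pdcdsh} in the continuous case, which is a welcome addition rather than a deviation.
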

\begin{proof}
The rejection probability $t(a)$ is defined, depending on the setting, by Equation~\eqref{t discrete} or Equation~\eqref{t continuous}, so that once the algorithm passes the rejection step in Line~\ref{rejection:step}, then for any $1 \leq b \leq n$, the vector $(X_1,\ldots, X_b)$ has distribution $\L(A | h(A,B)=1)$, where $A = (X_1,\ldots, X_b)$ and $h(A,B) = \mathbbm{1}(\sum_{i=1}^n X_i = k)$.  Let $a$ denote the observed value in this stage.

We now use induction on $n$.   
When $n=1$, we take $A = (X_1)$ and $B = \emptyset$, then Algorithm~\ref{alg:iid general} with input 
$(\mathcal{L}(X_1), k)$ returns the value of the input target sum $k$ for any $k \in \text{range}(X_1)$, which has distribution $\mathcal{L}(X_1 | X_1 = k)$.

Assume, for all $1 \leq b < n,$ Algorithm~\ref{alg:iid general} with input 
$(\mathcal{L}(X_{b+1}),\ldots,\mathcal{L}(X_n), \ell)$ returns a sample from $\mathcal{L}(X_{b+1}, \ldots X_n | h(a,B) = \ell),$ for any $\ell \in \text{range}(\sum_{i=1}^n X_i)$; i.e., it returns a sample from $\L(B | h(a,B) = 1)$, say $b$, where $B = (X_{b+1}, \ldots, X_n)$.

Hence, each time Algorithm~\ref{alg:iid general} 
is called, it first generates a sample from distribution ${\L(A | h(A,B)=1)}$, and then the return value of the recursive call in Line~\ref{return:value} returns a sample from $\L(B | h(a,B)=1)$.  
By Lemma~\ref{pdc:rejection}, $(a,b)$ is a sample from $\mathcal{L}((A,B) | h(A,B)=1)$.  
\end{proof}

In the case where computing the row rejection probabilities after the generation of each column is not practical, 
we recommend independent sampling of columns $1, \ldots, n-1$ all at once, with a single application of PDC deterministic second half for the generation of the final column. 
This approach is more widely applicable, as it requires very little information about the marginal distribution of each entry.  

Let $X_{i,n}'$ denote the random variable with distribution $\L\left(X_{i,n} \middle| \sum_{i=1}^m X_{i,n}\right)$.
In the following algorithm, the function used for the rejection probability, when $X_{i,n}'$ is discrete, is given by 
\[ h(i,\L(X_{i,n}'),k) = \Pr( X_{i,n}' = r_i - k), \]
and when $X_{i,n}'$ is continuous with bounded density $f_{X_{i,n}'}$, is given by
\[ h(i,\L(X_{i,n}'),k) = f_{X_{i,n}'}(r_i - k). \]
Thus, for Algorithm~\ref{mainalgorithm continuous} below, we simply need to be able to compute the distribution $\L(X_{i,n}')$ and find its mode, for $i=1,2,\ldots,m$. 

\begin{algorithm}[H]
\caption{Generating a random variate from $\L(X)$ specified in Equation~\eqref{eq:general}.}
\label{mainalgorithm continuous}
\begin{algorithmic}[1]
\FOR {$j=1,\ldots,n-1$}
  \STATE let $C_j$ denote the return value of Algorithm~\ref{alg:iid general} using input 
  $(\L(X_{1,j}),\ldots,\L(X_{m,j}), c_j).$ 
\ENDFOR
  \STATE let $x_{i,n} = r_i - \sum_{j=1}^{n-1} x_{i,j}$ for $i=1,\ldots,m$.
  \STATE \textbf{if} {$U > \prod_{i=1}^m \frac{h(i,\L(X_{i,n}'),x_{i,n})}{\sup_k h(i,\L(X_{i,n}'),k)}$} \textbf{restart} from Line 1
\STATE \textbf{return} $x$
\end{algorithmic}
\end{algorithm}

\begin{proposition}
Algorithm~\ref{mainalgorithm continuous} samples points according to the distribution in~\eqref{eq:general}. 
\end{proposition}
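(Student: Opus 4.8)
The plan is to recognize Algorithm~\ref{mainalgorithm continuous} as an instance of probabilistic divide-and-conquer (PDC) with deterministic second half, splitting the table into the first $n-1$ columns and the last column. Writing $C_j=(X_{1,j},\dots,X_{m,j})$ for the $j$-th column, I would first condition on all $n$ column-sum constraints. Since entries in distinct columns are independent and the constraint $\sum_i X_{i,j}=c_j$ involves only column $j$, the law $\L(X\mid\{\sum_i X_{i,j}=c_j\text{ for all }j\})$ factors as a product of independent columns, the $j$-th being $\L(C_j\mid\sum_i X_{i,j}=c_j)$. Hence the target \eqref{eq:general} can be rewritten as $\L\big((C_1',\dots,C_n')\,\big|\,\sum_j X_{i,j}=r_i\text{ for all }i\big)$, where each $C_j'$ is an independent copy of $C_j$ conditioned on its column sum, with entries $X_{i,j}'$. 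By Lemma~\ref{PDC sums}, the calls to Algorithm~\ref{alg:iid general} in the main loop produce exact draws from $\L(C_j')$ for $j=1,\dots,n-1$, so the first $n-1$ columns are sampled from the correct proposal law.

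Next I would apply Theorem~\ref{thm:pdc} (or Theorem~\ref{thm:pdcdsh} in the real-valued case) with $A=(C_1',\dots,C_{n-1}')$, $B=C_n'$, and conditioning event $E'=\{\sum_j X_{i,j}=r_i\text{ for all }i\}$. The crucial structural fact is that $A$ determines $B$: given the first $n-1$ columns, the row constraints force $x_{i,n}=r_i-\sum_{j<n}x_{i,j}$, and this completion automatically has the correct last column sum, since $\sum_i x_{i,n}=N-\sum_{j<n}c_j=c_n$. Thus the completion set $\cY_a$ is the singleton $\{(x_{1,n},\dots,x_{m,n})\}$ (or empty, when that point lies outside the support of $C_n'$), which is exactly the deterministic-second-half situation realized by the line of Algorithm~\ref{mainalgorithm continuous} that sets $x_{i,n}=r_i-\sum_{j<n}x_{i,j}$.

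It then remains to check that the final rejection step samples $A$ from $\L(A\mid E')$, which I would do via Lemma~\ref{pdc:rejection}: drawing $A$ from its proposal law $\prod_{j<n}\L(C_j')$ and accepting with probability $B(\cY_a)/C$ for any constant $C\ge\sup_a B(\cY_a)$ yields $\L(A\mid E')$. The completion measure $B(\cY_a)$ is the probability (density) that $C_n'$ equals the forced value; because $C_n'$ is $C_n$ conditioned on $\sum_i X_{i,n}=c_n$ and the forced value already satisfies this constraint, $B(\cY_a)$ is proportional, with the $a$-independent factor $\P(\sum_i X_{i,n}=c_n)^{-1}$, to $\prod_{i=1}^m \P(X_{i,n}=x_{i,n})$, using independence of the entries within $C_n$. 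Choosing a bounding constant proportional to $\prod_i\sup_k\P(X_{i,n}=k)$, which is admissible since the supremum of a product is at most the product of the suprema, the $a$-independent factors cancel and the acceptance probability reduces exactly to the per-entry product $\prod_{i=1}^m \P(X_{i,n}=x_{i,n})/\sup_k\P(X_{i,n}=k)$ computed by the rejection function $h$ at the forced completion (with masses replaced by densities $f_{X_{i,n}}$ in the continuous case). Combining the two PDC steps then shows the returned table is an exact draw from \eqref{eq:general}.

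The main obstacle I anticipate lies in the bookkeeping around the rejection constant and the measure-zero case. One must verify that factoring the normalizing supremum entry-by-entry is legitimate, so that $C\propto\prod_i\sup_k\P(X_{i,n}=k)$ genuinely dominates $\sup_a B(\cY_a)$; that $B(\cY_a)$ really is the \emph{joint} completion probability of the forced last column, which factors as a product of the \emph{unconditioned} entry probabilities rather than a product of marginals of the conditioned column $C_n'$ (these differ, and it is the within-column independence of the raw entries that makes the product form exact); and that in the real-valued setting the relevant partial sums admit bounded densities, so that the hypotheses of Theorem~\ref{thm:pdcdsh} and Lemma~\ref{pdc:rejection} are satisfied.
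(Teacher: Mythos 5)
Your overall route is the one the paper intends: the paper's entire proof of this proposition is the single sentence that it ``follows analogously to Lemma~\ref{uniform:lemma}, and uses the conditional independence of the rows given the column sums are satisfied,'' and what you have written is essentially the missing expansion of that sentence into an explicit PDC deterministic-second-half argument --- the factorization of the column-sum-conditioned law into independent columns $C_j'$, the correctness of the proposal draws via Lemma~\ref{PDC sums}, the forced completion $x_{i,n}=r_i-\sum_{j<n}x_{i,j}$ together with the automatic identity $\sum_i x_{i,n}=N-\sum_{j<n}c_j=c_n$ that makes $\cY_a$ a singleton, and Lemma~\ref{pdc:rejection} for the final acceptance step. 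All of that is sound, and your point that the bounding constant may be taken entrywise as $\prod_i\sup_k\P(X_{i,n}=k)$ because it dominates $\sup_y\prod_i\P(X_{i,n}=y_i)$ is exactly the right way to justify a per-entry normalization.

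The one genuine point of friction is your closing identification of the acceptance probability you derive with the rejection function $h$ the algorithm actually evaluates. Your computation correctly gives $B(\cY_a)\propto\prod_i\P(X_{i,n}=x_{i,n})$, a product of the \emph{unconditioned} entry laws (the factor $\P(\sum_i X_{i,n}=c_n)^{-1}$ being absorbed into the constant). But $h$ is defined through $X_{i,n}'$, the marginal of a single entry of the last column \emph{conditioned on the column sum}; as in \eqref{mass:1}, $\P(X_{i,n}'=k)=\P(X_{i,n}=k)\,\P\bigl(\sum_{\ell\neq i}X_{\ell,n}=c_n-k\bigr)/\P\bigl(\sum_{\ell}X_{\ell,n}=c_n\bigr)$, so the product $\prod_i\P(X_{i,n}'=x_{i,n})$ carries the extra factor $\prod_i\P\bigl(\sum_{\ell\neq i}X_{\ell,n}=c_n-x_{i,n}\bigr)$, which varies over valid completions and is not the joint law of $C_n'$ (the entries of the conditioned column are dependent, so the product of its marginals is not $B(\cY_a)$). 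You flag exactly this distinction in your last paragraph but then resolve it in the wrong direction: the quantity your argument certifies is the unconditioned product, while the algorithm as printed normalizes the conditioned marginals entrywise. As written, your proof therefore establishes the proposition for the variant of Algorithm~\ref{mainalgorithm continuous} in which $h$ is built from $\L(X_{i,n})$; to cover the algorithm verbatim you would need either to show the two rejection functions induce the same accepted law (they do not in general) or to argue that the intended reading of $h$ is the unconditioned one.
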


The proof follows analogously to Lemma~\ref{uniform:lemma}, and uses the conditional independence of the rows given the column sums are satisfied.  

In the most general case when even the columns cannot be simulated using Algorithm~\ref{alg:iid general} or another variant, 
we apply PDC deterministic second half to both the columns and the rows, which simply demands in the continuous case that there is at least one random variable with a bounded density per column (resp., row).  
In Algorithm~\ref{mainalgorithm general} below, each column has a rejection function $t_j$, which is either the normalization of the probability mass function 
\[ t_j = \frac{\P(X_{i_j,j} = a)}{\max_\ell \P(X_{i_j,j}=\ell)} \]
 or the normalization of the probability density function 
 \[ t_j = \frac{f_{X_{i_j,j}}(a)}{\sup_\ell f_{X_{i_j,j}}(\ell)}. \]
There is also a row rejection function $s_i$.  Let $X_{i,j}'$ denote the random variable with distribution $\L\left(X_{i,j} \middle| \sum_{\ell=1}^m X_{\ell,j}\right)$.  When $\L(X_{i,j}')$ is discrete, we have
\[ s_i(a) = \frac{\P(X_{i,j}' = a)}{\max_\ell \P(X_{i,j}'=\ell)} \]
and when $\L(X_{i,j}')$ is continuous, we have
\[ s_i(a) = \frac{f_{X_{i,j}'}(a)}{\sup_\ell f_{X_{i,j}'}(\ell)}. \]

\begin{algorithm}[H]
\caption{Generating $\L(\,(X_{1,1}, X_{1,2},\ldots, X_{m,n})\, |\, E)$}
\label{mainalgorithm general}
\begin{algorithmic}[1]
\STATE let $j' \in \{1,\ldots, n\}$ denote a column.  \label{line:one}
\FOR {$j\in \{1,2,\ldots,n\} \setminus \{j'\}$}\label{line:two}
  \STATE let $i_j \in \{1,\ldots, m\}$ denote a row in the $j$-th column
  \FOR {$i=\{1,2,\ldots,m\} \setminus \{i_j\}$}
    \STATE generate $x_{ij}$ from $\L(X_{ij})$
  \ENDFOR
\STATE let $x_{i_jj} = c_j - \sum_{i\neq i_j} x_{ij}$
\STATE \textbf{with probability} $1-t_{j}(x_{i_j j})$ \textbf{restart} from Line~\ref{line:two}
\ENDFOR
\FOR {$i=1,\ldots,m$}
  \STATE let $x_{ij'} = r_i - \sum_{j\neq j'} x_{ij}$
  \STATE \textbf{with probability} $1-s_{i}(x_{ij'})$ \textbf{restart} from Line~\ref{line:two}
  \STATE \textbf{if} {$x_{ij'} < 0$} \textbf{restart} from Line~\ref{line:one}
\ENDFOR
\STATE \textbf{return} $x$
\end{algorithmic}
\end{algorithm}

\begin{proposition}
Algorithm~\ref{mainalgorithm general} samples points according to the distribution in~\eqref{eq:general}. 
\end{proposition}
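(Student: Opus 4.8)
The plan is to prove correctness by exhibiting Algorithm~\ref{mainalgorithm general} as two nested applications of probabilistic divide-and-conquer with deterministic second half, paralleling the proofs of Lemma~\ref{uniform:lemma} and Lemma~\ref{PDC sums}. Write $E = E_{\mathrm{col}} \cap E_{\mathrm{row}}$, where $E_{\mathrm{col}}$ is the event that every column sum attains its prescribed value and $E_{\mathrm{row}}$ the event that every row sum does. Since the entries $(X_{i,j})$ are independent, Remark~\ref{conditional:independence} gives that conditioning on $E_{\mathrm{col}}$ renders the columns independent, each $C_j$ distributed as $\L\big((X_{1,j},\dots,X_{m,j}) \mid \sum_i X_{i,j} = c_j\big)$; hence, exactly as in Lemma~\ref{lem:luckyuniformity}, the target law $\L(X \mid E)$ coincides with $\L\big((C_1,\dots,C_n) \mid E_{\mathrm{row}}\big)$ for these independent conditioned columns.

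First I would verify the inner loop. For each fixed $j \neq j'$, drawing $x_{i,j}$ ($i \neq i_j$) from their marginals, setting $x_{i_j,j} = c_j - \sum_{i \neq i_j} x_{i,j}$, and accepting with probability $t_j(x_{i_j,j})$ is precisely Algorithm~\ref{alg:iid general} with the split chosen so that $\{i_j\}$ is the single deterministic coordinate. Lemma~\ref{PDC sums} then shows each accepted column $C_j$ is an exact sample from $\L\big((X_{1,j},\dots,X_{m,j}) \mid \sum_i X_{i,j} = c_j\big)$, and because entries in distinct columns are independent, the accepted columns $(C_j)_{j \neq j'}$ are jointly an exact sample from $\prod_{j \neq j'} \L\big(C_j \mid \sum_i X_{i,j} = c_j\big)$.

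It remains to combine these with the final column by one further divide-and-conquer step. Put $A = (C_j)_{j \neq j'}$ and $B = C_{j'}$, the latter carrying the (conditioned) law of column $j'$, and take the conditioning event to be $E_{\mathrm{row}}$. Given $A = a$, the row constraints force the unique completion $x_{i,j'} = r_i - \sum_{j \neq j'} a_{i,j}$, and the column-$j'$ sum condition is automatic because $\sum_i r_i = \sum_j c_j$; thus $\cY_a$ is either empty (when some $x_{i,j'} < 0$) or the single point $y = (x_{i,j'})_i$, so the deterministic-second-half hypothesis of Theorem~\ref{thm:pdc} holds. The completion probability satisfies $B(\cY_a) \propto \prod_i \P\big(X_{i,j'} = x_{i,j'}\big)$ (the denominator $\P(\sum_i X_{i,j'}=c_{j'})$ being a constant), so applying the rejection form Lemma~\ref{pdc:rejection} with the admissible constant $C = \prod_i \sup_\ell \P(X_{i,j'}=\ell)$ turns the acceptance probability $B(\cY_a)/C$ into the product $\prod_i s_i(x_{i,j'})$, with $s_i$ vanishing off the support so that negative completions are rejected automatically and the explicit sign test merely triggers the corresponding restart. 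By Theorem~\ref{thm:pdc} the returned table is distributed as $\L\big((A,B) \mid E_{\mathrm{row}}\big)$, which by the first paragraph equals $\L(X \mid E)$.

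The main obstacle I anticipate is justifying the factorization of this last rejection into the independent per-row tests $s_i$ and verifying that $C = \prod_i \sup_\ell \P(X_{i,j'}=\ell)$ is an admissible PDC constant. This reduces to two observations: that the completion probability factors over rows into the per-row marginals of column $j'$, and that the constrained supremum over completions with fixed column sum $c_{j'}$ is dominated by the product of the free per-row suprema, so that $C \ge \sup_a B(\cY_a)$ as Lemma~\ref{pdc:rejection} requires. One should be careful here that the correct per-row factor is the normalized \emph{unconditional} marginal of $X_{i,j'}$, which is exactly what makes the product bound legitimate (at the cost of a possibly larger, but finite, expected number of rejections). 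The remaining bookkeeping—that a restart from Line~\ref{line:one} or Line~\ref{line:two} both amount to resampling $A$, and that the continuous case replaces mass functions by bounded densities throughout—is routine and mirrors Lemma~\ref{uniform:lemma}.
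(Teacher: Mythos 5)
The paper states this proposition without proof (the closest it comes is the remark after the preceding proposition that the proof ``follows analogously to Lemma~\ref{uniform:lemma}''), so your write-up is not being measured against an argument in the text: it supplies one. Your structure --- conditioning on the column sums to reduce the target to $\L\big((C_1,\dots,C_n)\mid E_{\mathrm{row}}\big)$ for independent conditioned columns, identifying each pass of the inner loop as Algorithm~\ref{alg:iid general} with the single deterministic coordinate $i_j$ so that Lemma~\ref{PDC sums} applies, and then one further deterministic-second-half step with $A=(C_j)_{j\neq j'}$, $B=C_{j'}$ and forced completion $y_i=r_i-\sum_{j\neq j'}x_{i,j}$ --- is the right one, and the details you verify (the column-$j'$ sum being automatic from $\sum_i r_i=\sum_j c_j$; restarts only discarding independent trials) are exactly the ones that need verifying.

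The substantive point is the one you flag yourself, and you should press it harder. Your PDC computation shows the final acceptance must be proportional to $\P(C_{j'}=y)=\prod_i\P(X_{i,j'}=y_i)\big/\P\big(\sum_\ell X_{\ell,j'}=c_{j'}\big)$, i.e.\ to the product of \emph{unconditional} marginals over the slice $\sum_i y_i=c_{j'}$. The paper, however, defines $s_i$ through the conditional marginal $X_{i,j'}'\sim\L\big(X_{i,j'}\mid\sum_\ell X_{\ell,j'}=c_{j'}\big)$, and
\[
\prod_i\P\big(X_{i,j'}'=y_i\big)=\Big(\prod_i\P(X_{i,j'}=y_i)\Big)\cdot\frac{\prod_i\P\big(\sum_{\ell\neq i}X_{\ell,j'}=c_{j'}-y_i\big)}{\P\big(\sum_\ell X_{\ell,j'}=c_{j'}\big)^m},
\]
whose second factor is not constant on the slice: already for $m=2$ the conditional-marginal product is the square of the correct weight, and for i.i.d.\ geometric entries with $m\geq 3$ the correct weight is constant on the slice while $\prod_i\P(X_{i,j'}'=y_i)$ is not. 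So your argument proves the proposition for the algorithm whose row rejection uses the normalized unconditional marginals, and in doing so it exposes that the row rejection as literally defined in the paper is biased; present this as a correction to the definition of $s_i$ rather than as a ``careful choice.'' Two cosmetic remarks: the admissible constant in Lemma~\ref{pdc:rejection} should be $\prod_i\sup_\ell\P(X_{i,j'}=\ell)\big/\P\big(\sum_\ell X_{\ell,j'}=c_{j'}\big)$ rather than the bare product (harmless, since any constant dominating the supremum of the completion probability yields the same conditional law), and the algorithm applies the $s_i$ tests one row at a time with independent restarts, which multiplies out to exactly the product acceptance you analyze.
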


\ignore{
\subsection{Other Tables}
\label{sect other}

While many applications of contingency tables naturally involve integer-valued data, there are also many applications for which real-valued data makes more sense. 
One difficulty in sampling real tables is that in any hard rejection sampling scheme, the probability of obtaining the correct row and column sums is zero. 
This was handled by PDC in Algorithm~\ref{mainalgorithm real} by separating out the integer and fractional parts of the exponential random variable, for which the resulting rejection probabilities were computable. 

In the discrete case as well, we used the fact that the bits in a geometrically distributed random variable are independent in order to compute the rejection probabilities. 
In general, however, the rejection probabilities are not easy to compute, and it is not obvious how one would separate the fractional and integer parts of a general random variable, since the exponential random variable is the only random variable which has independent integer and fractional parts~\cite{SteutelThiemann}. 
The same is true of bits for integer-valued random variables.  
(Is the analogous statement for bits of geometrically distributed random variables true as well?).

We now state Algorithm~\ref{alg:iid general}, which is a general procedure that samples from a conditional distribution of the form $\L((X_1, X_2, \ldots, X_n) | \sum_{i=1}^n X_i = k)$; see Lemma~\ref{PDC sums} for the explicit form of the rejection probability $t(a)$.  

\begin{algorithm}[H]
\caption{Random generation from $\L((X_1, X_2, \ldots, X_n) | \sum_{i=1}^n X_i = k)$}
\label{alg:iid general}
\begin{algorithmic}[1]
\STATE \textbf{assume:} $\L(X_1), \L(X_2), \ldots, \L(X_n)$ are independent and either all discrete with $\P(\sum_{i=1}^n X_i = k)>0,$
 or $\L(\sum_{i=1}^n X_i)$ has a bounded density with $f_{\sum_{i=1}^n X_i}(k)>0$. 
\IF {$n = 1$}
\RETURN $k$
\ENDIF
\STATE let $r$ be any value in $\{1,\ldots, n\}$.  
\FOR {$i=1,\ldots, r$}
    \STATE generate $X_{i}$ from $\L(X_i)$.  
\ENDFOR
\STATE let $a \equiv (X_1, \ldots, X_r)$
\STATE let $s \equiv	 \sum_{i=1}^r X_i$.  
  \IF {$U \geq t(a)$  \ (See Lemma~\ref{PDC sums})} 
    \STATE \textbf{restart}
  \ELSE
    \STATE Recursively call Algorithm~\ref{alg:iid general} on $\mathcal{L}(X_{r+1}),\ldots,\mathcal{L}(X_n)$, with target sum $k-s$, and set $(X_{r+1}, \ldots, X_n)$ equal to the return value.
    \RETURN $(X_1, \ldots, X_{n})$.
  \ENDIF
\end{algorithmic}
\end{algorithm}

\begin{lemma}\label{PDC sums}
Suppose for each $a, b, \in \{1,\ldots,n\}$, $a<b$, either
\begin{enumerate}
\item $\L(\sum_{i=a}^b X_i)$ is discrete and
\begin{equation}\label{t discrete}
 t(a) = \frac{ \P\left(\sum_{i=r+1}^n X_i = k - \sum_{i=1}^r X_i | X_1, \ldots, X_r \right) }{\max_\eta \P(\sum_{i=r+1}^n X_i = \eta) }; 
 \end{equation}
or
\item $\L(\sum_{i=a}^b X_i)$ has a bounded density, denoted by $f_{a,b}$, and
\begin{equation}\label{t continuous}
 t(a) = \frac{ f_{r+1,n}\left(k - \sum_{i=1}^r X_i | X_1, \ldots, X_r \right) }{\max_\eta f_{r+1,n}(\eta) }. 
 \end{equation}
\end{enumerate}
Then Algorithm~\ref{alg:iid general} samples from $\L((X_1, X_2, \ldots, X_n) | \sum_{i=1}^n X_i = k).$
\end{lemma}
\begin{proof}
The rejection probability $t(a)$ is defined by Equation~\eqref{eq:t(a)}, so that once the rejection step in Line~12 is true, then for any $1 \leq b \leq n$, the vector $(X_1,\ldots, X_b)$ has distribution $\L(A | h(A,B)=1)$, where $A = (X_1,\ldots, X_b)$ and $h(A,B) = \mathbbm{1}(\sum_{i=1}^n X_i = k)$.  Let $a$ denote the observed value in this stage.

We now use induction on $n$.   
When $n=1$, we take $A = (X_1)$ and $B = \emptyset$, then Algorithm~\ref{alg:iid general} with input 
$(\mathcal{L}(X_1), k)$ returns the value of the input target sum $k$ for any $k \in \text{range}(X_1)$, which has distribution $\mathcal{L}(X_1 | X_1 = k)$.

Assume, for all $1 \leq b < n,$ Algorithm~\ref{alg:iid general} with input 
$(\mathcal{L}(X_{b+1}),\ldots,\mathcal{L}(X_n), \ell)$ returns a sample from $\mathcal{L}(X_{b+1}, \ldots X_n | h(a,B) = \ell),$ for any $\ell \in \text{range}(\sum_{i=1}^n X_i)$, i.e., it returns a sample from $(B | h(a,B) = 1)$, say $b$, where $B = (X_{b+1}, \ldots, X_n)$.

Hence, each time Algorithm~\ref{alg:iid general} 
is called, it first generates a sample from $\L(A | h(A,B) = 1)$, and then the return value of the recursive call in Line 15 returns a sample from $\L(B | h(a,B)=1)$.  By a modification to Theorem~\ref{thm:pdc}, see \cite[Lemma 2.2]{PDCDSH}, $(a,b)$ is a sample from $\mathcal{L}((A,B) | h(A,B)=1)$.  
\end{proof}

\begin{algorithm}[H]
\caption{Generating real--valued point inside a uniformly random $(r,c)$-contingency table.}
\label{mainalgorithm continuous}
\begin{algorithmic}[1]
\FOR {$j=2,\ldots,n$}
  \STATE let $X_{1}, \ldots, X_{m}$ denote iid Exponential($\frac{m}{m+c_j}$) random variables
  \STATE let $C_j$ denote the return value of Algorithm~\ref{alg:iid general} using input 
  $(\L(X_{1}),\ldots,\L(X_{m}), c_j).$ 
\ENDFOR
\FOR {$i=1,\ldots,m$}
  \STATE let $x_{i1} = r_i - \sum_{j=2}^n x_{i,j}$
  \STATE \textbf{if} {$x_{i1} < 0$} \textbf{restart} from Line 1
\ENDFOR
\STATE \textbf{return} $x$
\end{algorithmic}
\end{algorithm}

\begin{theorem}
Algorithm~\ref{mainalgorithm continuous} samples real--valued points uniformly in $E$.
\end{theorem}

The proof follows analogously to Theorem~\ref{main theorem}.  


}

\ignore{

\subsection{Binary Tables}

A special class of contingency tables are those for which the entries are restricted to be 0 or 1, see for example  \cite{barvinok2010number, bezakova,blanchet2009efficient, blanchet2013characterizing,brualdi1980matrices,CanfieldMcKay,krebs1992markov}.
The approach of Algorithm~\ref{mainalgorithm} applies directly, with geometric random variables replaced with Bernoulli random variables, and the condition $x_{i1} < 0$ is replaced with $x_{i1} \notin \{0,1\}$.
In addition, rather than sampling entries in a given column independently, as is the approach in Algorithm~\ref{alg:iid general}, we can instead start with a vector $(1,1,\ldots, 0,0)$ containing exactly $c_j$ 1s and $m-c_j$ 0s, and apply a weighted permutation, so that a 1 lies in the $i$--th entry with probability $r_i/S$.  
Doing this for columns $2,3,\ldots, n$, we then accept this collection of columns if the first column can be completed, and fill in the residual entries; otherwise, we restart. 
We summarize this procedure in Algorithm~\ref{binary algorithm}.

\ignore{
\begin{algorithm}[H]
\caption{generating a uniformly random $(r,c)$-binary contingency table}
\label{binary algorithm}
\begin{algorithmic}[1]
\FOR {$j=2,\ldots,n$}
  \STATE Let $X_{1}, \ldots, X_{m}$ denote iid Bernoulli($\frac{m}{m+c_j}$) random variables
  \STATE $C_j = \text{Constrained\_Sum\_Vector}(\L(X_{1j}),\ldots,\L(X_{mj}), c_j)$ (see Algorithm~\ref{alg:iid general})
\ENDFOR
\FOR {$i=1,\ldots,m$}
  \STATE let $x_{i1} = r_i - \sum_{j=2}^n x_{i,j}$
  \STATE \textbf{if} {$x_{i1} \notin \{0,1\}$} \textbf{restart} from Line 1
\ENDFOR
\STATE \textbf{return} $x$
\end{algorithmic}
\end{algorithm}

\begin{algorithm}[H]
\caption{PDC DSH generation of a uniformly random $(r,c)$-binary contingency table with all row sums equal to $r$ and all column sums equal to $c$.}
\label{PDC DSH binary improved}
\begin{algorithmic}[1]
\FOR {$j=2,\ldots,n$}
  \STATE let $C_j=(1,1,\ldots,0,\ldots,0)$ be the vector with $c_j$ initial 1s.
  \STATE Apply random permutation to $C_j$
\ENDFOR
\IF { $r - \sum_{j=2}^n x_{i,j} \in \{0,1\}$ for all rows $i$, }
\STATE let $C_1 = (r - \sum_{j=2}^n x_{1,j}, \ldots, r - \sum_{j=2}^n x_{m,j})$
\STATE \textbf{return} $(C_1,\ldots,C_n)$
\ELSE
\STATE restart
\ENDIF
\end{algorithmic}
\end{algorithm}
} 


\begin{algorithm}[H]
\caption{PDC DSH generation of a uniformly random $(r,c)$-binary contingency table.}
\label{binary algorithm}
\begin{algorithmic}[1]
\FOR {$j=2,\ldots,n$}
  \STATE let $C_j=(1,1,\ldots,0,\ldots,0)$ be the vector with $c_j$ initial 1s and $m-c_j$ initial 0s.
  \STATE apply weighted random permutation to $C_j$ with weights $(r_1,\ldots,r_m)$
\ENDFOR
\IF { $r_i - \sum_{j=2}^n x_{i,j} \in \{0,1\}$ for all rows $i$, }
\STATE let $C_1 = (r_1 - \sum_{j=2}^n x_{1,j}, \ldots, r_m - \sum_{j=2}^n x_{m,j})$
\STATE \textbf{return} $(C_1,\ldots,C_n)$
\ELSE
\RESTART 
\ENDIF
\end{algorithmic}
\end{algorithm}


We now specialize to the case when all $r_i = n/2$ and all $c_j = m/2$.  In this case, 
Line 3 in Algorithm~\ref{binary algorithm} is simply a uniformly random permutation.

\ignore{Enumeratively, this corresponds to a special case covered in \cite{CanfieldMcKay}, where 
the asymptotic number of such tables was explored when the density is a fixed constant between 0 and 1; here we have taken the density to be exactly equal to 1/2.
\begin{theorem}{\cite{CanfieldMcKay}}
Let $B(m,n)$ denote the number of density one-half $m\times n$ binary contingency tables.  We have as $m, n\to\infty$, 
\begin{equation}\label{binary asymptotic}
B(m,n) = \frac{\binom{n}{n/2}^m \binom{m}{m/2}^n}{\binom{mn}{mn/2}} \exp\left({-\frac{1}{2}+o(1)}\right).
\end{equation}
\end{theorem}
In other words, the independence estimate is surprisingly accurate, needing just a correction factor of $\sqrt{1/e}$.
}
\ignore{
If one selects the value of each entry in a density $1/2$ binary contingency table using iid Bernoulli random variables with parameter $1/2$, then the probability of obtaining a density one-half contingency table is given by $2^{-mn}B(m,n)$.  If we assume that $m, n$ are large enough for the asymptotics of Stirling's approximation to hold, we obtain
$$2^{-mn}B(m,n) 
\sim  \left(\frac{2}{\pi n}\right)^{m/2}\left(\frac{2}{\pi m}\right)^{n/2} \sqrt{\frac{\pi m n}{e}}=: G(m,n).$$
For example, $G(100,100) \doteq 2\times 10^{-218}$, $G(10,10) \doteq 10^{-11}$, $G(100,10) \doteq 5\times 10^{-70} $, which demonstrates the impractical nature of using hard rejection sampling.


Instead of independent entries, we could instead generate random rows that already satisfy the row restraints, and only reject on the column sums.  This gives our $G(m,n)$ a boost by a factor of $\left(\binom{n}{n/2}2^{-n}\right)^m$, so that
$$Pr(WTGL | \text{balanced rows}) = 2^{-mn}B(m,n) / \left(\binom{n}{n/2}2^{-n}\right)^m$$
$$ \sim \left(\frac{\pi n}{2}\right)^{m/2} G(m,n) = \left(\frac{2}{\pi m}\right)^{n/2} \sqrt{\frac{\pi m n}{e}}=: H(m,n).$$
For example, $H(100,100) \doteq 1.6\times 10^{-108}$ and $H(10,10) \doteq 10^{-5}$, $H(100,10) \doteq 10^{-10}$.
While this demonstrates an improvement, it is still impractical for even moderately-sized tables.
}


\begin{proposition}
Assume all row sums and column sums are the same.  
Consider the algorithm which samples $m$ independent balanced rows, independently and uniformly from among the $\binom{n}{n/2}^m$ possible choices, repeatedly until a table is generated which satisfies all conditions; denote the expected number of rejections by $r_1$.  Denote the expected number of rejections using Algorithm~\ref{binary algorithm} by $r_2.$  We have
\[ \frac{r_1}{r_2} = \binom{n}{n/2}, \]
i.e., the speedup of using Algorithm~\ref{binary algorithm} over the na\"{\i}ve algorithm is a factor of $\binom{n}{n/2}$.  
\end{proposition}
\begin{proof}
In Algorithm~\ref{binary algorithm}, 
we simulate $n-1$ balanced columns, and check the residual row sums $(r_1',\ldots,r_m') = (r_1 - \sum_{j=2}^n x_{1j}, \ldots, r_m-\sum_{j=2}^n x_{mn})$.  
If we do not have that $r_i' \in \{0,1\}$ for $i = 1, 2, \ldots, m$, we reject with probability 1.  
On the other hand, if indeed $r_i' \in \{0,1\}$ for $i=1, 2, \ldots, m$, then necessarily exactly half of these will be $1$, the other half will be $0$.  
Since each of these $\binom{n}{n/2}$ outcomes is equally likely, the acceptance probability is uniform, and hence scales up to 1.  
\end{proof}

Taking this line of reasoning one step further, we can 
simulate all but the first two columns.  Here again we have a hard rejection if $r_i' \notin \{0,1,2\}$ for all $i=1,2,\ldots,m$.  The admissible cases for when $r_i' \in \{0,1,2\}$ have the form\footnote{using integer partition notation $55432111 = 5^24^13^12^11^3$} $2^{m/2-k}\, 1^{2k}\, 0^{m/2-k}$ for $k=0, 1, 2, \ldots, m/2$.  In other words, there are exactly an even number of $r_i'=1$.  
$$
\begin{array}{|c|c|c|c|c|c|}
\hline
1 & 1 & \ast & \ast & 0 & 0 \\
\hline
1 & 1 & \ast & \ast & 0 & 0 \\
\hline
\end{array}
$$
$$
\begin{array}{cccccc}
2 & 2 & 1 & 1 & 0 & 0
\end{array}
$$

The illustration above shows that when $r_i' = 0$ or $2$ there is no choice, but when $r_i'=1$ there is a choice.  The 2 by 2 sub block consisting of $\ast$'s can be either $\left(\begin{array}{cc} 0 &1 \\ 1 & 0\end{array}\right)$ or $\left(\begin{array}{cc} 1 &0 \\ 0 & 1\end{array}\right)$, so for every $2k$ residual row sums that have value $1$, there are $\binom{2k}{k}$ ways to choose which column the 1s will go.  
The corresponding PDC algorithm would be to accept a sample of $n-2$ column permutations satisfying $r_i'\in\{0,1,2\}$ for all rows $i$ with probability $\binom{k}{k/2}/\binom{n}{n/2}$.  
However, despite the added complexity, this approach actually has the same overall rejection rate as the deterministic second half algorithm, and so we prefer the simpler Algorithm~\ref{binary algorithm}.

}

\ignore{
\begin{algorithm}[H]
\caption{PDC generation of a uniformly random density one-half $(r,c)$-binary contingency table.}
\label{binary algorithm density half}
\begin{algorithmic}[1]
\FOR {$j=2,\ldots,n$}
  \STATE let $C_j=(1,1,\ldots,0,\ldots,0)$ be any vector with $m/2$ 1s and $m/2$ 0s.
  \STATE Apply random permutation to $C_j$.
\ENDFOR
\IF { $r - \sum_{j=3}^n x_{i,j} \in \{0,1,2\}$ for all rows $i$, }
\STATE Let $k = \#\{i: r_i = 1\}$.  
\IF { $U < \binom{k}{k/2} / \binom{n}{n/2}$ }
\STATE Uniformly select one of the $\binom{k}{k/2}$ completions and assign values to $C_1$ and $C_2$.
\ELSE 
\STATE restart
\ENDIF
\STATE \textbf{return} $(C_1,\ldots,C_n)$
\ELSE
\STATE restart
\ENDIF
\end{algorithmic}
\end{algorithm}
}

\ignore{
By sampling all but the first two rows, we have an overall rejection rate given by the quotient of 
\[\max_{a_0,a_1,a_2} \binom{n}{a_0\ a_1/2\ a_1/2\ a_2} \doteq \binom{n}{n/4\ n/4\ n/4\ n/4} \sim \frac{4^{n+1}}{n^{3/2}} \sqrt{\frac{2}{\pi^3}}
\]
with the number of tables.  
Thus, the expected number of times to sample from all but the first two columns in a binary contingency table is asymptotically
\[ \sqrt{\frac{8e}{\pi^3\,m\,n}}\left(\frac{\pi\, n}{8}\right)^{m/2}. \]
}
\ignore{
he expected number of times to sample from all but the first two columns in a binary contingency table is asymptotically
\[ \sqrt{\frac{8e}{\pi^3\,m\,n}}\left(\frac{\pi\, n}{8}\right)^{m/2}. \]
}




\ignore{
\subsection{Other Tables}

Does anyone care about $X_{i,j}$ independent random variables subject to row/column constraints, where each $X_{i,j}$ has some \emph{arbitrary distribution}, not just exponential/geometric or uniform over the set of objects?  Probably worth mentioning that our results apply in this case as well, where instead of the row/column conditions determining geometric/exponential distribution, we instead name the distribution and the row/column conditions and wish to sample under that distribution.  

This is called the transportation polytope.  

}

\ignore{
\subsection{Tables with independent entries}

Instead of the uniform measure over contingency tables, one could more generally sample from a table having independent entries with distributions $\L(X_{11}), \L(X_{12}), \ldots, \L(X_{mn})$.  
In this case, we do not assume to be able to quickly calculate the point probabilities or probability density functions of sums of these random variables, as is required in Algorithm~\ref{alg:iid general}.  

Instead, we apply PDC deterministic second half to the columns, which simply demands in the continuous case that there is at least one random variable with a bounded density per column (resp., row).  
In Algorithm~\ref{mainalgorithm general} below, each column has a rejection function $t_j$, which is either the normalization of the probability mass function $\P(X_{i_jj} = a)/\max_\ell \P(X_{i_jj}=\ell)$ or the normalization of the probability density function $f_{X_{i_jj}}(a) / \sup_\ell f_{X_{i_jj}}(\ell)$.

\begin{algorithm}[H]
\caption{Generating $\L(\,(X_{11}, X_{12},\ldots, X_{mn})\, |\, E)$}
\label{mainalgorithm general}
\begin{algorithmic}[1]
\STATE let $j' \in \{1,\ldots, n\}$ denote a column.  
\FOR {$j\in \{1,2,\ldots,n\} \setminus \{j'\}$}
  \STATE let $i_j \in \{1,\ldots, m\}$ denote a row in the $j$-th column
  \FOR {$i=\{1,2,\ldots,m\} \setminus \{i_j\}$}
    \STATE generate $x_{ij}$ from $\L(X_{ij})$
  \ENDFOR
\STATE let $x_{i_jj} = c_j - \sum_{i\neq i_j} x_{ij}$
\STATE \textbf{with probability} $1-t_{j}(a)$ \textbf{restart} from Line 2
\ENDFOR
\FOR {$i=1,\ldots,m$}
  \STATE let $x_{ij'} = r_i - \sum_{j\neq j'} x_{ij}$
  \STATE \textbf{if} {$x_{ij'} < 0$} \textbf{restart} from Line 1
\ENDFOR
\STATE \textbf{return} $x$
\end{algorithmic}
\end{algorithm}
}


\ignore{
\begin{theorem}
Let $P_j$ denote the expected number of times Algorithm~\ref{alg:lucky} resamples column $j$.  Let $Q$ denote the expected number of times Algorithm~\ref{alg:lucky} restarts due to a row violation conditional on all column conditions satisfied.  We have 
\[ \text{Cost of Algorithm~\ref{alg:lucky}} = Q\sum_{j=2}^n P_j. \]
\[ \text{Cost of Algorithm~\ref{PDC DSH improved}} = \frac{Q}{\prod_{i=1}^n p_{i,1}} \sum_{j=2}^n \frac{P_j}{p_{1,j}}. \]
\end{theorem}
}

\ignore{
\begin{algorithm}[H]
\caption{$(X_1, X_2, \ldots, X_{n}|\, T = k)$}
\begin{algorithmic}
\STATE \assume at least one $X_i$ is a continuous random variable with bounded density
\STATE Select an index of any of the continuous random variables, say $I$.
\STATE Sample $x_1, \ldots, x_{I-1}, x_{I+1}, \ldots, x_n$
\STATE $t_I \leftarrow k - t_n^I$
\STATE let $M = \sup_\ell \ f_{X_I}(\ell)$.
\IF {$t_I \in \range(X_I)$ }
	\IF {$u\, M < f_{X_I}(t_I)$}
		\STATE $x_I \leftarrow t_I$
		\STATE \return $x$
	\ENDIF
\ELSE
\STATE \restart
\ENDIF
\end{algorithmic}\label{PDC continuous}
\end{algorithm}
}

\ignore{
\textbf{(algorithm for real contingency tables goes here)}

Let $x=\big(x_{ij}\big)$ be a matrix of independent exponential random variables with rate $n/r_i$.

\textbf{(these lemmas could probably be condensed a lot)}

\begin{lemma}
The expected row sums of $x$ are $r$, and the expected column sums of $x$ are $N/n$ and thus uniformly within $\gamma$ of $c$.
\end{lemma}
\begin{proof}
For any $i=1,\ldots,m$,
\[\sum_j\E\big[x_{ij}\big]=\sum_j\frac{r_i}{n}=r_i.\]
Similarly, for any $j=1,\ldots,n$,
\[\sum_i\E\big[x_{ij}\big]=\sum_i\frac{n+r_i}{n}=\frac Nn.\qedhere\]
\end{proof}

\begin{lemma}
The conditional distribution of $x$ given $\sum_jx_{ij}=r_i$ for all $i$ and $\sum_ix_{ij}=c_j$ for all $j$ is that of a uniformly random $(r,c)$-contingency table.
\end{lemma}
\begin{proof}
For any $(r,c)$-contingency table $\xi$, the density of $x$ at $\xi$ with respect to the Lebesgue measure $\lambda$ on $\R^{nm}$ is
\[\frac{d\P}{d\lambda}\big[x=\xi\big]=\prod_{i,j}\frac{n}{r_i}\exp\bigg({-}\frac{n}{r_i}\xi_{ij}\bigg)
=n^{mn}e^{-mn}\prod_ir_i^n.\]
Since this probability does not depend on $\xi$, it follows that the restriction of $x$ to $(r,c)$-contingency tables is uniform.
\end{proof}

For $j=1,\ldots,n$, let $C_j=(C_{1j},\ldots,C_{mj})$ be independent random vectors with distribution given by $(x_{1j},\ldots,x_{mj})$ conditional on $\sum_ix_{ij}=c_j$, that is,
\[\P\big[C_j=(\xi_{1j},\ldots,\xi_{mj})\big]=\frac{\P\big[x_{1j}=\xi_{1j},\ldots,x_{mj}=\xi_{mj}\big]}{\P\big[\sum_ix_{ij}=c_j\big]}\]
for all non-negative integer vectors $\xi_j$ with $\sum_i\xi_{ij}=c_j$ and 0 otherwise.

\begin{lemma}
The conditional distribution of $C=(C_1,\ldots,C_n)$ given $\sum_jC_{ij}=r_i$ for all $i$ is that of a uniformly random $(r,c)$-contingency table.
\end{lemma}
\begin{proof}
For any $(r,c)$-contingency table $\xi$, $\frac{d\P}{d\lambda}[C=\xi]$ is a constant multiple of $\frac{d\P}{d\lambda}[x=\xi]$.
\end{proof}
} 




\section{Acknowledgements}

The authors gratefully acknowledge helpful suggestions by Chris Anderson, Richard Arratia, Jesus de Loera, and also Igor Pak for help with the literature.

\ignore{
\section{Conclusion}

The special case of $2\times n$ contingency tables has received particular attention in the literature, as it is relatively simple while still being interesting---many statistical applications of contingency tables involve an axis with only two categories (male/female, test/control, etc).

Dyer and Greenhill \cite{dyergreenhill} described a $O(n^2\log N)$ asymptotically uniform MCMC algorithm based on updating a $2\times2$ submatrix at each step. \cite{kitajimamatsui} adapted the same chain using coupling from the past to obtain an exactly uniform sampling algorithm at the cost of an increased run time of $O(n^3\log N)$. 

If one is \emph{not} willing to accept \emph{almost} uniform samples, then MCMC methods (excluding coupling from the past) are out of the question, and one is left to fashion ad hoc approaches or wait until rejection sampling completes.  
Our approach using PDC is \emph{exact} sampling for all finite values of parameters, and is provably better than hard rejection sampling.  
Thus, if the table is small enough, one should prefer PDC, as it provides the most practical exact sampling approach.
Our method also generalizes easily to real-valued contingency tables.

In the special case of $2 \times n$ tables, our approach using PDC is more efficient than existing methods.  
In addition, PDC has the potential, given sufficiently many leading bits of the number of partially completed tables, to provide an asymptotically most efficient algorithm.  
}

\bibliographystyle{imsart-nameyear}
\bibliography{ctables_ref}

\begin{thebibliography}{50}

\bibitem[\protect\citeauthoryear{Arratia and DeSalvo}{2011}]{PDC}
\begin{barticle}[author]
\bauthor{\bsnm{Arratia},~\bfnm{Richard}\binits{R.}} \AND
  \bauthor{\bsnm{DeSalvo},~\bfnm{Stephen}\binits{S.}}
(\byear{2011}).
\btitle{Probabilistic divide-and-conquer: a new exact simulation method, with
  integer partitions as an example}.
\bjournal{arXiv preprint arXiv:1110.3856}.
\end{barticle}
\endbibitem

\bibitem[\protect\citeauthoryear{Arratia and Tavar{\'e}}{1994}]{IPARCS}
\begin{barticle}[author]
\bauthor{\bsnm{Arratia},~\bfnm{Richard}\binits{R.}} \AND
  \bauthor{\bsnm{Tavar{\'e}},~\bfnm{Simon}\binits{S.}}
(\byear{1994}).
\btitle{Independent process approximations for random combinatorial
  structures}.
\bjournal{Advances in mathematics}
\bvolume{104}
\bpages{90--154}.
\end{barticle}
\endbibitem

\bibitem[\protect\citeauthoryear{Baldoni-Silva, De~Loera and
  Vergne}{2004}]{BaldoniSilva}
\begin{barticle}[author]
\bauthor{\bsnm{Baldoni-Silva},~\bfnm{W.}\binits{W.}},
  \bauthor{\bsnm{De~Loera},~\bfnm{J.~A.}\binits{J.~A.}} \AND
  \bauthor{\bsnm{Vergne},~\bfnm{M.}\binits{M.}}
(\byear{2004}).
\btitle{Counting integer flows in networks}.
\bjournal{Found. Comput. Math.}
\bvolume{4}
\bpages{277--314}.
\bdoi{10.1007/s10208-003-0088-8}
\bmrnumber{2078665 (2005i:05077)}
\end{barticle}
\endbibitem

\bibitem[\protect\citeauthoryear{Barvinok}{2007}]{BarvinokInequalities}
\begin{barticle}[author]
\bauthor{\bsnm{Barvinok},~\bfnm{Alexander}\binits{A.}}
(\byear{2007}).
\btitle{Brunn-{M}inkowski inequalities for contingency tables and integer
  flows}.
\bjournal{Adv. Math.}
\bvolume{211}
\bpages{105--122}.
\bdoi{10.1016/j.aim.2006.07.012}
\bmrnumber{2313530 (2008c:05006)}
\end{barticle}
\endbibitem

\bibitem[\protect\citeauthoryear{Barvinok}{2008}]{BarvinokPermanents}
\begin{barticle}[author]
\bauthor{\bsnm{Barvinok},~\bfnm{Alexander}\binits{A.}}
(\byear{2008}).
\btitle{Enumerating contingency tables via random permanents}.
\bjournal{Combin. Probab. Comput.}
\bvolume{17}
\bpages{1--19}.
\bdoi{10.1017/S0963548307008668}
\bmrnumber{2376421 (2008k:05008)}
\end{barticle}
\endbibitem

\bibitem[\protect\citeauthoryear{Barvinok}{2009a}]{barvinok2009asymptotic}
\begin{barticle}[author]
\bauthor{\bsnm{Barvinok},~\bfnm{Alexander}\binits{A.}}
(\byear{2009}a).
\btitle{Asymptotic estimates for the number of contingency tables, integer
  flows, and volumes of transportation polytopes}.
\bjournal{International Mathematics Research Notices}
\bvolume{2009}
\bpages{348--385}.
\end{barticle}
\endbibitem

\bibitem[\protect\citeauthoryear{Barvinok}{2009b}]{BarvinokIntegerFlows}
\begin{barticle}[author]
\bauthor{\bsnm{Barvinok},~\bfnm{Alexander}\binits{A.}}
(\byear{2009}b).
\btitle{Asymptotic estimates for the number of contingency tables, integer
  flows, and volumes of transportation polytopes}.
\bjournal{Int. Math. Res. Not. IMRN}
\bvolume{2}
\bpages{348--385}.
\bdoi{10.1093/imrn/rnn133}
\bmrnumber{2482118 (2010c:52019)}
\end{barticle}
\endbibitem

\bibitem[\protect\citeauthoryear{Barvinok}{2010a}]{Barvinok}
\begin{barticle}[author]
\bauthor{\bsnm{Barvinok},~\bfnm{Alexander}\binits{A.}}
(\byear{2010}a).
\btitle{What does a random contingency table look like?}
\bjournal{Combinatorics, Probability and Computing}
\bvolume{19}
\bpages{517--539}.
\end{barticle}
\endbibitem

\bibitem[\protect\citeauthoryear{Barvinok}{2010b}]{barvinok2010number}
\begin{barticle}[author]
\bauthor{\bsnm{Barvinok},~\bfnm{Alexander}\binits{A.}}
(\byear{2010}b).
\btitle{On the number of matrices and a random matrix with prescribed row and
  column sums and 0--1 entries}.
\bjournal{Advances in Mathematics}
\bvolume{224}
\bpages{316--339}.
\end{barticle}
\endbibitem

\bibitem[\protect\citeauthoryear{Barvinok and
  Hartigan}{2012}]{BarvinokHartigan}
\begin{barticle}[author]
\bauthor{\bsnm{Barvinok},~\bfnm{Alexander}\binits{A.}} \AND
  \bauthor{\bsnm{Hartigan},~\bfnm{J.~A.}\binits{J.~A.}}
(\byear{2012}).
\btitle{An asymptotic formula for the number of non-negative integer matrices
  with prescribed row and column sums}.
\bjournal{Trans. Amer. Math. Soc.}
\bvolume{364}
\bpages{4323--4368}.
\bdoi{10.1090/S0002-9947-2012-05585-1}
\bmrnumber{2912457}
\end{barticle}
\endbibitem

\bibitem[\protect\citeauthoryear{Barvinok et~al.}{2010}]{BarvinokApproximate}
\begin{barticle}[author]
\bauthor{\bsnm{Barvinok},~\bfnm{Alexander}\binits{A.}},
  \bauthor{\bsnm{Luria},~\bfnm{Zur}\binits{Z.}},
  \bauthor{\bsnm{Samorodnitsky},~\bfnm{Alex}\binits{A.}} \AND
  \bauthor{\bsnm{Yong},~\bfnm{Alexander}\binits{A.}}
(\byear{2010}).
\btitle{An approximation algorithm for counting contingency tables}.
\bjournal{Random Structures Algorithms}
\bvolume{37}
\bpages{25--66}.
\bdoi{10.1002/rsa.20301}
\bmrnumber{2674620 (2011j:68174)}
\end{barticle}
\endbibitem

\bibitem[\protect\citeauthoryear{Bender}{1974}]{BenderTables}
\begin{barticle}[author]
\bauthor{\bsnm{Bender},~\bfnm{Edward~A.}\binits{E.~A.}}
(\byear{1974}).
\btitle{The asymptotic number of non-negative integer matrices with given row
  and column sums}.
\bjournal{Discrete Math.}
\bvolume{10}
\bpages{217--223}.
\bmrnumber{0389621 (52 \#\#10452)}
\end{barticle}
\endbibitem

\bibitem[\protect\citeauthoryear{Bez{\'a}kov{\'a}, Bhatnagar and
  Vigoda}{2007}]{bezakova}
\begin{barticle}[author]
\bauthor{\bsnm{Bez{\'a}kov{\'a}},~\bfnm{Ivona}\binits{I.}},
  \bauthor{\bsnm{Bhatnagar},~\bfnm{Nayantara}\binits{N.}} \AND
  \bauthor{\bsnm{Vigoda},~\bfnm{Eric}\binits{E.}}
(\byear{2007}).
\btitle{Sampling binary contingency tables with a greedy start}.
\bjournal{Random Structures \& Algorithms}
\bvolume{30}
\bpages{168--205}.
\end{barticle}
\endbibitem

\bibitem[\protect\citeauthoryear{Bez{\'a}kov{\'a} et~al.}{2006}]{bezakova2}
\begin{bincollection}[author]
\bauthor{\bsnm{Bez{\'a}kov{\'a}},~\bfnm{Ivona}\binits{I.}},
  \bauthor{\bsnm{Sinclair},~\bfnm{Alistair}\binits{A.}},
  \bauthor{\bsnm{{\v{S}}tefankovi{\v{c}}},~\bfnm{Daniel}\binits{D.}} \AND
  \bauthor{\bsnm{Vigoda},~\bfnm{Eric}\binits{E.}}
(\byear{2006}).
\btitle{Negative examples for sequential importance sampling of binary
  contingency tables}.
In \bbooktitle{Algorithms--ESA 2006}
\bpages{136--147}.
\bpublisher{Springer}.
\end{bincollection}
\endbibitem

\bibitem[\protect\citeauthoryear{Blanchet et~al.}{2009}]{blanchet2009efficient}
\begin{barticle}[author]
\bauthor{\bsnm{Blanchet},~\bfnm{Jose~H}\binits{J.~H.}} \betal{et~al.}
(\byear{2009}).
\btitle{Efficient importance sampling for binary contingency tables}.
\bjournal{The Annals of Applied Probability}
\bvolume{19}
\bpages{949--982}.
\end{barticle}
\endbibitem

\bibitem[\protect\citeauthoryear{Blanchet and
  Stauffer}{2013}]{blanchet2013characterizing}
\begin{barticle}[author]
\bauthor{\bsnm{Blanchet},~\bfnm{Jose}\binits{J.}} \AND
  \bauthor{\bsnm{Stauffer},~\bfnm{Alexandre}\binits{A.}}
(\byear{2013}).
\btitle{Characterizing optimal sampling of binary contingency tables via the
  configuration model}.
\bjournal{Random Structures \&amp; Algorithms}
\bvolume{42}
\bpages{159--184}.
\end{barticle}
\endbibitem

\bibitem[\protect\citeauthoryear{Blitzstein and
  Diaconis}{2011}]{blitzstein2011sequential}
\begin{barticle}[author]
\bauthor{\bsnm{Blitzstein},~\bfnm{Joseph}\binits{J.}} \AND
  \bauthor{\bsnm{Diaconis},~\bfnm{Persi}\binits{P.}}
(\byear{2011}).
\btitle{A sequential importance sampling algorithm for generating random graphs
  with prescribed degrees}.
\bjournal{Internet Mathematics}
\bvolume{6}
\bpages{489--522}.
\end{barticle}
\endbibitem

\bibitem[\protect\citeauthoryear{Brualdi}{1980}]{brualdi1980matrices}
\begin{barticle}[author]
\bauthor{\bsnm{Brualdi},~\bfnm{Richard~A}\binits{R.~A.}}
(\byear{1980}).
\btitle{Matrices of zeros and ones with fixed row and column sum vectors}.
\bjournal{Linear algebra and its applications}
\bvolume{33}
\bpages{159--231}.
\end{barticle}
\endbibitem

\bibitem[\protect\citeauthoryear{Canfield and McKay}{2005}]{CanfieldMcKay}
\begin{barticle}[author]
\bauthor{\bsnm{Canfield},~\bfnm{E~Rodney}\binits{E.~R.}} \AND
  \bauthor{\bsnm{McKay},~\bfnm{Brendan~D}\binits{B.~D.}}
(\byear{2005}).
\btitle{Asymptotic enumeration of dense 0-1 matrices with equal row sums and
  equal column sums}.
\bjournal{Journal of Combinatorics}
\bvolume{12}
\bpages{R29}.
\end{barticle}
\endbibitem

\bibitem[\protect\citeauthoryear{Chen, Dinwoodie and
  Sullivant}{2006}]{chen2006sequential}
\begin{barticle}[author]
\bauthor{\bsnm{Chen},~\bfnm{Yuguo}\binits{Y.}},
  \bauthor{\bsnm{Dinwoodie},~\bfnm{Ian~H}\binits{I.~H.}} \AND
  \bauthor{\bsnm{Sullivant},~\bfnm{Seth}\binits{S.}}
(\byear{2006}).
\btitle{Sequential importance sampling for multiway tables}.
\bjournal{The Annals of Statistics}
\bpages{523--545}.
\end{barticle}
\endbibitem

\bibitem[\protect\citeauthoryear{Chen et~al.}{2005}]{chen}
\begin{barticle}[author]
\bauthor{\bsnm{Chen},~\bfnm{Yuguo}\binits{Y.}},
  \bauthor{\bsnm{Diaconis},~\bfnm{Persi}\binits{P.}},
  \bauthor{\bsnm{Holmes},~\bfnm{Susan~P}\binits{S.~P.}} \AND
  \bauthor{\bsnm{Liu},~\bfnm{Jun~S}\binits{J.~S.}}
(\byear{2005}).
\btitle{Sequential Monte Carlo methods for statistical analysis of tables}.
\bjournal{Journal of the American Statistical Association}
\bvolume{100}
\bpages{109--120}.
\end{barticle}
\endbibitem

\bibitem[\protect\citeauthoryear{Cryan and Dyer}{2003}]{cryan2003polynomial}
\begin{barticle}[author]
\bauthor{\bsnm{Cryan},~\bfnm{Mary}\binits{M.}} \AND
  \bauthor{\bsnm{Dyer},~\bfnm{Martin}\binits{M.}}
(\byear{2003}).
\btitle{A polynomial-time algorithm to approximately count contingency tables
  when the number of rows is constant}.
\bjournal{Journal of Computer and System Sciences}
\bvolume{67}
\bpages{291--310}.
\end{barticle}
\endbibitem

\bibitem[\protect\citeauthoryear{Cryan et~al.}{2006}]{cryan2006rapidly}
\begin{barticle}[author]
\bauthor{\bsnm{Cryan},~\bfnm{Mary}\binits{M.}},
  \bauthor{\bsnm{Dyer},~\bfnm{Martin}\binits{M.}},
  \bauthor{\bsnm{Goldberg},~\bfnm{Leslie~Ann}\binits{L.~A.}},
  \bauthor{\bsnm{Jerrum},~\bfnm{Mark}\binits{M.}} \AND
  \bauthor{\bsnm{Martin},~\bfnm{Russell}\binits{R.}}
(\byear{2006}).
\btitle{Rapidly mixing Markov chains for sampling contingency tables with a
  constant number of rows}.
\bjournal{SIAM Journal on Computing}
\bvolume{36}
\bpages{247--278}.
\end{barticle}
\endbibitem

\bibitem[\protect\citeauthoryear{De~Loera et~al.}{2004}]{DeLoeraRational}
\begin{barticle}[author]
\bauthor{\bsnm{De~Loera},~\bfnm{Jes{{\'u}}s~A.}\binits{J.~A.}},
  \bauthor{\bsnm{Hemmecke},~\bfnm{Raymond}\binits{R.}},
  \bauthor{\bsnm{Tauzer},~\bfnm{Jeremiah}\binits{J.}} \AND
  \bauthor{\bsnm{Yoshida},~\bfnm{Ruriko}\binits{R.}}
(\byear{2004}).
\btitle{Effective lattice point counting in rational convex polytopes}.
\bjournal{J. Symbolic Comput.}
\bvolume{38}
\bpages{1273--1302}.
\bdoi{10.1016/j.jsc.2003.04.003}
\bmrnumber{2094541 (2005i:52020)}
\end{barticle}
\endbibitem

\bibitem[\protect\citeauthoryear{DeSalvo}{2014}]{PDCDSH}
\begin{barticle}[author]
\bauthor{\bsnm{DeSalvo},~\bfnm{Stephen}\binits{S.}}
(\byear{2014}).
\btitle{Probabilistic divide-and-conquer: deterministic second half}.
\bjournal{arXiv preprint arXiv:1411.6698}.
\end{barticle}
\endbibitem

\bibitem[\protect\citeauthoryear{Devroye}{1986}]{devroye}
\begin{bbook}[author]
\bauthor{\bsnm{Devroye},~\bfnm{Luc}\binits{L.}}
(\byear{1986}).
\btitle{Nonuniform random variate generation}.
\bpublisher{Springer-Verlag, New York}.
\bdoi{10.1007/978-1-4613-8643-8}
\bmrnumber{836973 (87i:65012)}
\end{bbook}
\endbibitem

\bibitem[\protect\citeauthoryear{Diaconis and
  Gangolli}{1995}]{DiaconisGangolli}
\begin{bincollection}[author]
\bauthor{\bsnm{Diaconis},~\bfnm{Persi}\binits{P.}} \AND
  \bauthor{\bsnm{Gangolli},~\bfnm{Anil}\binits{A.}}
(\byear{1995}).
\btitle{Rectangular arrays with fixed margins}.
In \bbooktitle{Discrete probability and algorithms ({M}inneapolis, {MN},
  1993)}.
\bseries{IMA Vol. Math. Appl.}
\bvolume{72}
\bpages{15--41}.
\bpublisher{Springer, New York}.
\bdoi{10.1007/978-1-4612-0801-3_3}
\bmrnumber{1380519 (97e:05013)}
\end{bincollection}
\endbibitem

\bibitem[\protect\citeauthoryear{Diaconis et~al.}{1998}]{diaconissturmfels}
\begin{barticle}[author]
\bauthor{\bsnm{Diaconis},~\bfnm{Persi}\binits{P.}},
  \bauthor{\bsnm{Sturmfels},~\bfnm{Bernd}\binits{B.}} \betal{et~al.}
(\byear{1998}).
\btitle{Algebraic algorithms for sampling from conditional distributions}.
\bjournal{The Annals of statistics}
\bvolume{26}
\bpages{363--397}.
\end{barticle}
\endbibitem

\bibitem[\protect\citeauthoryear{Duchon}{2011}]{Duchon:2011aa}
\begin{barticle}[author]
\bauthor{\bsnm{Duchon},~\bfnm{Philippe}\binits{P.}}
(\byear{2011}).
\btitle{Random generation of combinatorial structures: Boltzmann samplers and
  beyond}.
\end{barticle}
\endbibitem

\bibitem[\protect\citeauthoryear{Duchon et~al.}{2004a}]{Boltzmann}
\begin{barticle}[author]
\bauthor{\bsnm{Duchon},~\bfnm{Philippe}\binits{P.}},
  \bauthor{\bsnm{Flajolet},~\bfnm{Philippe}\binits{P.}},
  \bauthor{\bsnm{Louchard},~\bfnm{Guy}\binits{G.}} \AND
  \bauthor{\bsnm{Schaeffer},~\bfnm{Gilles}\binits{G.}}
(\byear{2004}a).
\btitle{Boltzmann samplers for the random generation of combinatorial
  structures}.
\bjournal{Combin. Probab. Comput.}
\bvolume{13}
\bpages{577--625}.
\bdoi{10.1017/S0963548304006315}
\bmrnumber{2095975 (2005k:05030)}
\end{barticle}
\endbibitem

\bibitem[\protect\citeauthoryear{Duchon et~al.}{2004b}]{duchon2004boltzmann}
\begin{barticle}[author]
\bauthor{\bsnm{Duchon},~\bfnm{Philippe}\binits{P.}},
  \bauthor{\bsnm{Flajolet},~\bfnm{Philippe}\binits{P.}},
  \bauthor{\bsnm{Louchard},~\bfnm{Guy}\binits{G.}} \AND
  \bauthor{\bsnm{Schaeffer},~\bfnm{Gilles}\binits{G.}}
(\byear{2004}b).
\btitle{Boltzmann samplers for the random generation of combinatorial
  structures}.
\bjournal{Combinatorics, Probability and Computing}
\bvolume{13}
\bpages{577--625}.
\end{barticle}
\endbibitem

\bibitem[\protect\citeauthoryear{Dyer and Greenhill}{2000}]{dyergreenhill}
\begin{barticle}[author]
\bauthor{\bsnm{Dyer},~\bfnm{Martin}\binits{M.}} \AND
  \bauthor{\bsnm{Greenhill},~\bfnm{Catherine}\binits{C.}}
(\byear{2000}).
\btitle{Polynomial-time counting and sampling of two-rowed contingency tables}.
\bjournal{Theoretical Computer Science}
\bvolume{246}
\bpages{265--278}.
\end{barticle}
\endbibitem

\bibitem[\protect\citeauthoryear{Everitt}{1992}]{everitt1992analysis}
\begin{bbook}[author]
\bauthor{\bsnm{Everitt},~\bfnm{Brian~S}\binits{B.~S.}}
(\byear{1992}).
\btitle{The analysis of contingency tables}.
\bpublisher{CRC Press}.
\end{bbook}
\endbibitem

\bibitem[\protect\citeauthoryear{Fernandez and
  Williams}{2010}]{PoissonBinomial}
\begin{barticle}[author]
\bauthor{\bsnm{Fernandez},~\bfnm{M.}\binits{M.}} \AND
  \bauthor{\bsnm{Williams},~\bfnm{S.}\binits{S.}}
(\byear{2010}).
\btitle{{Closed-Form Expression for the Poisson-Binomial Probability Density
  Function}}.
\bjournal{Aerospace and Electronic Systems, IEEE Transactions on}
\bvolume{46}
\bpages{803--817}.
\bdoi{10.1109/taes.2010.5461658}
\end{barticle}
\endbibitem

\bibitem[\protect\citeauthoryear{Fishman}{2012}]{fishman2012counting}
\begin{barticle}[author]
\bauthor{\bsnm{Fishman},~\bfnm{George~S}\binits{G.~S.}}
(\byear{2012}).
\btitle{Counting contingency tables via multistage Markov chain Monte Carlo}.
\bjournal{Journal of Computational and Graphical Statistics}
\bvolume{21}
\bpages{713--738}.
\end{barticle}
\endbibitem

\bibitem[\protect\citeauthoryear{Good and Crook}{1977}]{GoodCrook}
\begin{barticle}[author]
\bauthor{\bsnm{Good},~\bfnm{I.~J.}\binits{I.~J.}} \AND
  \bauthor{\bsnm{Crook},~\bfnm{J.~F.}\binits{J.~F.}}
(\byear{1977}).
\btitle{The enumeration of arrays and a generalization related to contingency
  tables}.
\bjournal{Discrete Math.}
\bvolume{19}
\bpages{23--45}.
\bmrnumber{0541011 (58 \#\#27527)}
\end{barticle}
\endbibitem

\bibitem[\protect\citeauthoryear{Greenhill and McKay}{2008}]{GreenhillMcKay}
\begin{barticle}[author]
\bauthor{\bsnm{Greenhill},~\bfnm{Catherine}\binits{C.}} \AND
  \bauthor{\bsnm{McKay},~\bfnm{Brendan~D.}\binits{B.~D.}}
(\byear{2008}).
\btitle{Asymptotic enumeration of sparse nonnegative integer matrices with
  specified row and column sums}.
\bjournal{Adv. in Appl. Math.}
\bvolume{41}
\bpages{459--481}.
\bdoi{10.1016/j.aam.2008.01.002}
\bmrnumber{2459445 (2009i:05021)}
\end{barticle}
\endbibitem

\bibitem[\protect\citeauthoryear{Kijima and Matsui}{2006}]{kitajimamatsui}
\begin{barticle}[author]
\bauthor{\bsnm{Kijima},~\bfnm{Shuji}\binits{S.}} \AND
  \bauthor{\bsnm{Matsui},~\bfnm{Tomomi}\binits{T.}}
(\byear{2006}).
\btitle{Polynomial time perfect sampling algorithm for two-rowed contingency
  tables}.
\bjournal{Random Structures \& Algorithms}
\bvolume{29}
\bpages{243--256}.
\end{barticle}
\endbibitem

\bibitem[\protect\citeauthoryear{Knuth}{1969}]{Knuth}
\begin{bbook}[author]
\bauthor{\bsnm{Knuth},~\bfnm{Donald~E.}\binits{D.~E.}}
(\byear{1969}).
\btitle{The art of computer programming. {V}ol. 1: {F}undamental algorithms}.
\bseries{Second printing}.
\bpublisher{Addison-Wesley Publishing Co., Reading, Mass.-London-Don Mills,
  Ont}.
\bmrnumber{0286317 (44 \#\#3530)}
\end{bbook}
\endbibitem

\bibitem[\protect\citeauthoryear{Knuth and Yao}{1976}]{KnuthYao}
\begin{bincollection}[author]
\bauthor{\bsnm{Knuth},~\bfnm{Donald~E.}\binits{D.~E.}} \AND
  \bauthor{\bsnm{Yao},~\bfnm{Andrew~C.}\binits{A.~C.}}
(\byear{1976}).
\btitle{The complexity of nonuniform random number generation}.
In \bbooktitle{Algorithms and complexity ({P}roc. {S}ympos.,
  {C}arnegie-{M}ellon {U}niv., {P}ittsburgh, {P}a., 1976)}
\bpages{357--428}.
\bpublisher{Academic Press, New York}.
\bmrnumber{0431601 (55 \#\#4598)}
\end{bincollection}
\endbibitem

\bibitem[\protect\citeauthoryear{Krebs}{1992}]{krebs1992markov}
\begin{btechreport}[author]
\bauthor{\bsnm{Krebs},~\bfnm{William~B}\binits{W.~B.}}
(\byear{1992}).
\btitle{Markov Chain Simulations of Binary Matrices}
\btype{Technical Report},
\bpublisher{DTIC Document}.
\end{btechreport}
\endbibitem

\bibitem[\protect\citeauthoryear{Nijnhuis and Wilf}{1978}]{NW}
\begin{bbook}[author]
\bauthor{\bsnm{Nijnhuis},~\bfnm{A}\binits{A.}} \AND
  \bauthor{\bsnm{Wilf},~\bfnm{HS}\binits{H.}}
(\byear{1978}).
\btitle{Combinatorial Algorithms: For Computers and Calculators}.
\bpublisher{Academic Press: New York}.
\end{bbook}
\endbibitem

\bibitem[\protect\citeauthoryear{Pagano and
  Halvorsen}{1981}]{pagano1981algorithm}
\begin{barticle}[author]
\bauthor{\bsnm{Pagano},~\bfnm{Marcello}\binits{M.}} \AND
  \bauthor{\bsnm{Halvorsen},~\bfnm{Katherine~Taylor}\binits{K.~T.}}
(\byear{1981}).
\btitle{An algorithm for finding the exact significance levels of r$\times$ c
  contingency tables}.
\bjournal{Journal of the American Statistical Association}
\bvolume{76}
\bpages{931--934}.
\end{barticle}
\endbibitem

\bibitem[\protect\citeauthoryear{Patefield}{1981}]{Patefield}
\begin{barticle}[author]
\bauthor{\bsnm{Patefield},~\bfnm{WM}\binits{W.}}
(\byear{1981}).
\btitle{Algorithm AS 159: An efficient method of generating random R$\times$ C
  tables with given row and column totals}.
\bjournal{Applied Statistics}
\bpages{91--97}.
\end{barticle}
\endbibitem

\bibitem[\protect\citeauthoryear{Schatzman}{1996}]{FFTAccuracy}
\begin{barticle}[author]
\bauthor{\bsnm{Schatzman},~\bfnm{James~C.}\binits{J.~C.}}
(\byear{1996}).
\btitle{Accuracy of the discrete {F}ourier transform and the fast {F}ourier
  transform}.
\bjournal{SIAM J. Sci. Comput.}
\bvolume{17}
\bpages{1150--1166}.
\bdoi{10.1137/S1064827593247023}
\bmrnumber{1404866 (97e:65155)}
\end{barticle}
\endbibitem

\bibitem[\protect\citeauthoryear{Soules}{2003}]{Soules}
\begin{barticle}[author]
\bauthor{\bsnm{Soules},~\bfnm{George~W.}\binits{G.~W.}}
(\byear{2003}).
\btitle{New permanental upper bounds for nonnegative matrices}.
\bjournal{Linear Multilinear Algebra}
\bvolume{51}
\bpages{319--337}.
\bdoi{10.1080/0308108031000098450}
\bmrnumber{2010711 (2004j:15031)}
\end{barticle}
\endbibitem

\bibitem[\protect\citeauthoryear{Stanton and White}{1986}]{StantonWhite}
\begin{bbook}[author]
\bauthor{\bsnm{Stanton},~\bfnm{Dennis}\binits{D.}} \AND
  \bauthor{\bsnm{White},~\bfnm{Dennis}\binits{D.}}
(\byear{1986}).
\btitle{Constructive combinatorics}.
\bpublisher{Springer-Verlag New York, Inc.}
\end{bbook}
\endbibitem

\bibitem[\protect\citeauthoryear{Steutel and Thiemann}{1987}]{SteutelThiemann}
\begin{bbook}[author]
\bauthor{\bsnm{Steutel},~\bfnm{FW}\binits{F.}} \AND
  \bauthor{\bsnm{Thiemann},~\bfnm{JGF}\binits{J.}}
(\byear{1987}).
\btitle{On the independence of integer and fractional parts}.
\bpublisher{Department of Mathematics and Computing Science, University of
  Technology}.
\end{bbook}
\endbibitem

\bibitem[\protect\citeauthoryear{Von~Neumann}{1951}]{Rejection}
\begin{barticle}[author]
\bauthor{\bsnm{Von~Neumann},~\bfnm{John}\binits{J.}}
(\byear{1951}).
\btitle{Various Techniques Used in Connection With Random Digits}.
\bjournal{Journal of Research of the National Bureau of Standards, Appl. Math.
  Series}
\bvolume{13}
\bpages{36--38}.
\end{barticle}
\endbibitem

\bibitem[\protect\citeauthoryear{Yoshida et~al.}{2011}]{yoshida2011semigroups}
\begin{barticle}[author]
\bauthor{\bsnm{Yoshida},~\bfnm{Ruriko}\binits{R.}},
  \bauthor{\bsnm{Xi},~\bfnm{Jing}\binits{J.}},
  \bauthor{\bsnm{Wei},~\bfnm{Shaoceng}\binits{S.}},
  \bauthor{\bsnm{Zhou},~\bfnm{Feng}\binits{F.}} \AND
  \bauthor{\bsnm{Haws},~\bfnm{David}\binits{D.}}
(\byear{2011}).
\btitle{Semigroups and sequential importance sampling for multiway tables}.
\bjournal{arXiv preprint arXiv:1111.6518}.
\end{barticle}
\endbibitem

\end{thebibliography}

\end{document}